\tikzset{commutative diagrams/.cd}
\numberwithin{equation}{section}
\newtheorem{theorem}{Theorem}[section]
\newtheorem{corollary}[theorem]{Corollary}
\newtheorem{lemma}[theorem]{Lemma}
\newtheorem{proposition}[theorem]{Proposition}
\theoremstyle{definition}
\newtheorem{definition}{Definition}[section]
\newtheorem{definition-theorem}[definition]{Definition-Theorem}
\newtheorem{claim}[definition]{Claim}
\newtheorem{rmk}[definition]{Remark}
\newtheorem{remark}[definition]{Remark}
\theoremstyle{remark}
\newtheorem*{remark*}{Remark}
\newcommand\Z{{\mathbb Z}}
\newcommand\Q{{\mathbb Q}}
\newcommand\R{{\mathbb R}}
\newcommand\F{{\mathbb F}}
\def\P{{\mathbb P}}
\newcommand\Fp{{\mathbb F}_p}
\newcommand\Zp{{\mathbb Z}_p}
\DeclareMathOperator{\im}{im}
\DeclareMathOperator{\Hom}{Hom}
\DeclareMathOperator{\Aut}{Aut}
\DeclareMathOperator{\GL}{GL}
\DeclareMathOperator{\Mat}{Mat}
\newcommand\subeq{\subseteq}
\newcommand\onto{\twoheadrightarrow}
\DeclarePairedDelimiter{\abs}{\lvert}{\rvert}
\DeclarePairedDelimiter{\set}{\{}{\}}
\DeclarePairedDelimiter{\parens}{\lparen}{\rparen}
\newcommand{\FinMod}{\mathbf{FinMod}}
\newcommand{\Sur}{\mathrm{Sur}} 
\newcommand{\surj}{\twoheadrightarrow}
\newcommand{\cok}{\mathrm{cok}}
\newcommand{\Fl}{\mathbf{Fl}}
\newcommand{\bfcok}{\mathbf{cok}}
\newcommand{\calFl}{\mathcal{F}l}
\DeclareMathOperator{\corank}{corank}
\renewcommand{\sp}{\mathrm{span}}
\newcommand{\E}{\mathbb{E}}
\newcommand\rank{{\operatorname{rank}}}
\renewcommand\P{{\mathbf{P}}}
\newcommand{\Prob}{\P}
\newcommand\diag{{\operatorname{diag}}}
\newcommand\al{\alpha}
\newcommand\la{\lambda}
\newcommand\Bv{{\mathbf v}}
\renewcommand\Pr{{\mathbf P }}
\newcommand\CA{{\mathcal A}}
\newcommand\CC{{\mathcal C}}
\newcommand\CF{{\mathcal F}}
\newcommand\BBZ {{\mathbb Z}}
\newcommand{\tM}{\tilde{M}}
\newcommand\eps{\varepsilon}
\newcommand\lang{\langle}
\newcommand\rang{\rangle}
\newcommand{\sqbinom}[2]{\begin{bmatrix}#1\\ #2\end{bmatrix}}
\newcommand{\defn}{\textbf}
\renewcommand{\vec}{\mathbf}
\begin{document}
\title[Flag universality]{Cohen-Lenstra flag universality for random matrix products}

\author{Yifeng Huang}
\address{Department of Mathematics, University of Southern California}  
\email{yifenghu@usc.edu}

 \author{Hoi H. Nguyen}
 \address{Department of Mathematics, Ohio State University}
 \email{nguyen.1261@osu.edu}
 
 \author{Roger Van Peski}
\address{Department of Mathematics, Columbia University}  
\email{rv2549@columbia.edu}


\begin{abstract}
    For $n \times n$ random integer matrices $M_1,\ldots,M_k$, the cokernels of the partial products $\cok(M_1 \cdots M_i), 1 \leq i \leq k$ naturally define a random flag of abelian $p$-groups. We prove that as $n \to \infty$, this flag converges universally, for any nondegenerate entry distribution, to the Cohen-Lenstra type measure which weights each flag inversely proportional to the size of its automorphism group. As a corollary, we prove universality of certain formulas for the limiting conditional distribution of $\cok(M_1M_2)$ given $\cok(M_1),\cok(M_2)$ in terms of Hall-Littlewood structure constants, which were previously obtained only for Haar matrices over $\mathbb{Z}_p$.

    Our proofs combine the general technology of Sawin-Wood \cite{sawin2022moment}, matrix product moment computations following those of \cite{nguyen2022universality}, and the computation done previously for Haar $p$-adic matrices in \cite{huang2024cokernel}.
\end{abstract}

\maketitle

\tableofcontents

\section{Introduction}\label{sec:intro}

The classical \emph{Cohen-Lenstra distribution} is the probability distribution on (isomorphism classes of) finite abelian $p$-groups\footnote{i.e. $\abs{G}$ is a power of $p$.} such that the probability of each such $G$ is inversely proportional to the number of group automorphisms of $G$,
\begin{equation}
\Pr(G) = \frac{\prod_{i \geq 1}(1-p^{-i})}{\abs{\Aut(G)}}.
\end{equation}
Cohen and Lenstra \cite{cohen-lenstra} originally conjectured that it governs the $p$-Sylow subgroups---often also called $p^\infty$-torsion parts or $p$-parts---of class groups of quadratic imaginary number fields. It is now believed to also govern other random groups: for instance, M{\'e}sz{\'a}ros \cite{meszaros2020distribution} showed it governs asymptotics of the $p$-parts of Jacobians of Erd\H{o}s-R\'enyi directed graphs, and Kahle-Newman \cite{kahle2022topology} conjectured that it governs homology of certain random simplicial complexes. 

The reason for these results and conjectures comes from random matrix universality. For any $n$, one may take $M \in \Mat_n(\Z)$ with independent, identically distributed (iid) entries and consider its \emph{cokernel}
\begin{equation}
\cok(M) := \Z^n/M\Z^n,
\end{equation}
a random abelian group. Wood \cite{W1} showed that the $p$-Sylow subgroup $\cok(M)_p$ of this group will be asymptotically Cohen-Lenstra-distributed as $n \to \infty$, for any entry distribution within a broad class. This extended earlier results of Friedman-Washington \cite{friedman-washington}, who showed the same result for matrices with entries distributed by the additive Haar measure on the $p$-adic integers $\Z_p$. Those results relied on structure specific to that measure, while \cite{W1} used different techniques to show universality. Such universality lies behind the above-mentioned conjectures and theorems, since in each case there is a random (or pseudorandom) matrix in the background, but the distribution is not the one studied in \cite{friedman-washington}. Our object in the present work is to prove a much more general universality statement, `flag universality', involving cokernels of products of several random matrices and maps between them. It implies as corollaries the result of \cite{W1}, as well as later results of \cite{nguyen2022universality}. Another corollary of interest is the universality of the `conditional convolution' of two random matrices, \Cref{thm:conv_cor_intro}, the distribution of which was previously described using Hall-Littlewood polynomials (special functions on $p$-adic groups) in \cite{van2020limits}.

These asymptotics of \cite{W1} are insensitive to details of the matrix entry distribution, but certain matrix operations will change them. For instance, a polynomial such as $A^2+I$ in such a random matrix will have a different limiting cokernel distribution, shown in the above generality by Cheong-Yu \cite{cheong2023cokernel} with various related results by Cheong-Huang \cite{cheong2021cohen,cheong2023cokernel}, Cheong-Kaplan \cite{cheong2022generalizations}, Cheong-Liang-Strand \cite{cheong2023polynomial}, and Lee \cite{lee2023joint}. The distributions in these works no longer weight the cokernel by $1/\abs{\Aut(G)}$, but they are still `$1/\Aut$-type' distributions. The key is that the $p$-part of the cokernel of, for example, $A^2+I$, carries extra structure. It is not just an abelian $p$-group (equivalently, module over the $p$-adic integers $\Z_p$), but a $\Z_p[T]/(T^2+1)$-module where $T$ acts by $A$. The limiting distribution on such modules $H$ still gives them probability proportional to $1/\abs{\Aut(H)}$, but now these are module automorphisms rather than group automorphisms. The limiting distribution of the cokernel as a group will then be a marginal distribution of this one, obtained by summing over possible modules structures on the group, which in some sense obscures the natural structure. 

More generally, it is natural to consider multivariate polynomials in several matrices. Linear polynomials in two matrices were considered by Lee \cite{lee2023joint,lee2024mixed}. For nonlinear polynomials in many matrices, arguably the simplest example is a product $A_k \cdots A_2 A_1$ of independent random matrices. Universality of the limiting joint distribution of $\cok(A_1)_p,\cok(A_2A_1)_p,\ldots,\cok(A_k \cdots A_1)_p$, for the same matrix distributions as in \cite{W1}, was shown by the second and third authors \cite{nguyen2022universality}, following identification of the limit in \cite{van2020limits,vanpeski2021halllittlewood} for the same Haar `uniform' random matrices of \cite{friedman-washington}. We find that the most natural way of viewing this distribution, conjectured in \cite{nguyen2022universality} and proven in \Cref{thm:flag}, is by including the data of natural maps between these groups as in the above discussion.

\subsection{Setup and main result} Our results, and the previous universality results mentioned, apply in the following generality:

\begin{definition}\label{def:alpha_p_balanced}
Given real $\alpha \in (0,1/2]$ and a prime $p$, we say a $\Z$-valued random variable $\xi$ is \defn{$\al,p$-balanced} if
\begin{equation}\label{eqn:alpha_R}
\max_{r \in \Z/p\BBZ} \P(\xi\equiv r \pmod{p}) \le 1-\al.
\end{equation}
We say that a random matrix with $\Z$-valued entries is $\alpha,p$-balanced if the entries are $\alpha,p$-balanced and independent (but not necessarily iid). For a finite set of primes $P$, we say a $\Z$-valued random variable $\xi$ is \defn{$\al,P$-balanced} if it is $\al,p$-balanced for all $p\in P$, and refer to a matrix as $\alpha,P$-balanced if its entries are independent and $\alpha,P$-balanced.
\end{definition}

To understand the limit distribution there, it is helpful---as with polynomials in a single matrix---to consider the extra structure these groups have. Fix $k\in \Z_{\geq 1}$ and a finite set of primes $P$. 

Let $\FinMod_P$ be the category of finite abelian groups whose order contains only prime factors in $P$. We call an abelian group in $\FinMod_P$ an \defn{abelian $P$-group}. Equivalently, $\FinMod_P$ is the category of finite modules over the ring 
\begin{equation}\label{def:ZP}
    \Z_{(P)}:=\Z[s^{-1}: \gcd(s,\prod_{p\in P}p)=1].
\end{equation}
When $P=\set{p}$ is a single prime, we simply denote $P$ by $p$ instead of $\set{p}$, so our notation agrees with the standard notation with a single prime. For any finite abelian group $G$, we denote $G_P:=G\otimes_\Z \Z_{(P)}$, namely, the product of $p$-Sylow subgroups of $G$ for all $p\in P$.

 Let $\Fl_{k,P}$ be the category of \defn{surjective $k$-flags} of finite abelian $P$-groups, namely, chains of $k$ finite abelian $P$-groups connected by surjections
\begin{equation}
    \mathbf{G}=(G_k\onto \dots \onto G_1),\, G_i\text{ is a finite abelian $P$-group}.
\end{equation}
A morphism $\boldsymbol{\alpha}:\mathbf{G}\onto \mathbf{G}'$ between two surjective $k$-flags of finite $P$-groups is a sequence of group homomorphisms
\begin{equation}
    \boldsymbol{\alpha}=(\alpha_k,\dots,\alpha_1), \,\alpha_i\in \Hom(G_i,G'_i)
\end{equation}
such that the diagram
\begin{equation}
\begin{tikzcd}
    {G_k} & \cdots & {G_1} \\
    {G'_k} & \cdots & {G_1'}
    \arrow[from=1-1, to=1-2]
    \arrow["{\alpha_k}", from=1-1, to=2-1]
    \arrow[from=1-2, to=1-3]
    \arrow["{\alpha_1}", from=1-3, to=2-3]
    \arrow[from=2-1, to=2-2]
    \arrow[from=2-2, to=2-3]
\end{tikzcd}
\end{equation}
commutes. For $n\in \Z_{\geq 1}$ and $M_1,\dots,M_k\in \Mat_n(\Z)$, let $\bfcok(M_1,\dots,M_k)_P$ denote the surjective $k$-flag of abelian $P$-groups
\begin{equation}
    \cok(M_1\cdots M_k)_P \onto \dots \onto \cok(M_1)_P,
\end{equation}
with maps induced from the natural inclusion $\im(M_1\cdots M_i)\subeq \im(M_1\cdots M_{i-1})$. For random matrices, one obtains a random surjective flag. Our main result shows that it follows a universal distribution asymptotically:

\begin{theorem}[Flag universality]\label{thm:flag}
Fix $\alpha \in (0,1/2]$ and $P$ a finite set of primes, and for each $n$ let $M_1,\ldots,M_k$ be independent $n \times n$ $\alpha,P$-balanced matrices over $\Z$. Let $\mathbf{G}=(G_k\onto \dots \onto G_1)$ be a surjective $k$-flag of finite abelian $P$-groups. Then
    \begin{equation}\label{eq:main_flag_intro}
        \lim_{n\to \infty}\Prob(\bfcok(M_1,\dots,M_k)_P\simeq \mathbf{G}) = \frac{1}{\abs{\Aut_{\Fl_{k,P}}(\mathbf{G})}} \parens*{\prod_{p\in P}\prod_{i=1}^\infty (1-p^{-i})}^k,
    \end{equation}
    where $\Aut_{\Fl_{k,P}}(\mathbf{G})$ is the automorphism group of $\mathbf{G}$ as a flag.
\end{theorem}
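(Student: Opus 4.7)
The plan is a moment method. For a fixed test surjective flag $\mathbf{H}=(H_k\onto\cdots\onto H_1)$ of finite abelian $P$-groups, introduce the expected count
\[
M_n(\mathbf{H}):=\E\!\left[\#\Sur_{\Fl_{k,P}}(\bfcok(M_1,\ldots,M_k)_P,\mathbf{H})\right].
\]
A key observation is that a flag morphism $\bfcok(M_1,\ldots,M_k)_P\to\mathbf{H}$ is uniquely determined by its top-level component $\phi\colon\Z^n\to H_k$: writing $\pi_i\colon H_k\onto H_i$ for the composition of the flag maps (with $\pi_k$ the identity), existence of the remaining components of the flag morphism is equivalent to the nested vanishing conditions $\pi_i\phi M_1\cdots M_i=0$ for every $i\leq k$, and surjectivity of the flag morphism is equivalent to surjectivity of $\phi$. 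Hence $M_n(\mathbf{H})$ expands as a sum over surjections $\phi\colon\Z^n\onto H_k$ of the probability of a coupled chain of annihilation events, one per layer of the flag. The Sawin-Wood framework \cite{sawin2022moment} applies in this setting to convert convergence of $M_n(\mathbf{H})$ for every $\mathbf{H}$, together with standard growth bounds, into distributional convergence of the random flag.

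I would identify the limit of $M_n(\mathbf{H})$ in two stages. First, when the entries of the $M_j$ are drawn from the additive Haar measure on $\Z_p$ (independently for each $p\in P$, with independence across primes), the limiting flag has been computed in \cite{huang2024cokernel}; translated to moments, this yields $\lim_n M_n(\mathbf{H})=1$ for every $\mathbf{H}$, which under Sawin-Wood is exactly the moment identity characterizing the $1/|\Aut_{\Fl_{k,P}}(\mathbf{G})|$-weighted measure on flags appearing in \eqref{eq:main_flag_intro}. Second, to pass from the Haar case to arbitrary $\alpha,P$-balanced entries, I would peel off the matrices one at a time, starting from $M_k$: conditional on $M_1,\ldots,M_{k-1}$, the innermost constraint $\phi M_1\cdots M_k=0$ is the event that the fixed $H_k$-valued row vector $\phi M_1\cdots M_{k-1}$ is annihilated by $M_k$ on the right, whose probability factors across the independent columns of $M_k$. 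Following the strategy of \cite{nguyen2022universality}, the contribution of ``spread'' $\phi$ matches the Haar probability up to $o(1)$ by a Littlewood-Offord-type anticoncentration bound, while ``non-spread'' $\phi$ contribute negligibly via a net/covering argument on the low-dimensional exceptional locus. Peeling off $M_{k-1},\ldots,M_1$ in turn reduces the problem layer by layer to an instance of the same type with one fewer matrix, and universality of the limit propagates through the flag by induction on $k$.

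The principal obstacle I anticipate is this universality step: the flag structure couples a single top-level surjection $\phi$ to $k$ simultaneous vanishing conditions with different matrix products, in contrast to the independent-surjection moments treated in \cite{nguyen2022universality}. Propagating the notion of spreadness through the flag projections $\pi_i$, and ensuring that a $\phi$ failing to be spread at \emph{any} level can be enumerated in a net small enough for its total contribution to remain negligible, is the combinatorial heart of the argument; it will require genuine adaptation of the anticoncentration bookkeeping in \cite{nguyen2022universality} rather than a black-box citation. Once universality of $\lim_n M_n(\mathbf{H})$ is in hand, invoking Sawin-Wood with the straightforward growth estimates on the moments completes the proof of \eqref{eq:main_flag_intro}.
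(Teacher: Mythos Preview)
Your high-level plan matches the paper's: compute flag-surjection moments, feed them into the Sawin--Wood machinery \cite{sawin2022moment}, and identify the limit via the Haar computation of \cite{huang2024cokernel}. The concrete error is the claim that $\lim_n M_n(\mathbf{H})=1$ for every $\mathbf{H}$. Your ``nested vanishing conditions'' $\pi_i\phi M_1\cdots M_i=0$ encode only the containment $\im(\phi M_1\cdots M_i)\subseteq\ker\pi_i$, and the resulting $\Fl_{k,P}$-surjection count is strictly larger than what you need. For instance with $k=2$ and $\mathbf{H}=(\Z/p\onto 0)$ one has $\ker\pi_1=\Z/p$, so the level-$1$ condition is vacuous and your moment reduces to $\E[\#\Sur(\cok(M_1M_2),\Z/p)]$; already for Haar $\Zp$-matrices this converges to $2$, not $1$. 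In general your moment converges to the number of chains $H_k=L_0\supseteq L_1\supseteq\cdots\supseteq L_k=0$ with $L_i\leq\ker\pi_i$, which depends nontrivially on $\mathbf{H}$.

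The paper handles this by working not in $\Fl_{k,P}$ but in a subcategory $\CC_P$ with the same objects and isomorphisms but fewer morphisms: a $\CC_P$-morphism must carry each step of the source flag \emph{onto} the corresponding step of the target, not merely into it. The resulting epimorphism count imposes $\im(\phi M_1\cdots M_i)=\ker\pi_i$ with equality at every level, and then the limiting moment really is $1$ for every $\mathbf{H}$; the inductive code/non-code argument you sketch goes through cleanly in that form. Working in $\CC_P$ also lets the paper verify well-behavedness via \cite[Lemma~6.23]{sawin2022moment}, which the authors remark is not obviously available for $\Fl_{k,P}$. You could in principle stay in $\Fl_{k,P}$, compute the correct $\mathbf{H}$-dependent moments, and still run Sawin--Wood, but both the moment computation and the well-behavedness check become substantially messier; switching to the exact-image conditions is the clean fix.
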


\subsection{Previous work and proof outline} This result follows a chain of special cases. For instance, in previous work the latter two authors showed:

\begin{theorem}[Combining\footnote{Minor note: those results actually stated a more restrictive definition of $\alpha$-balanced random matrix, but remain true with the same proofs in the generality here.} {\cite[Theorems 1.2 and 1.3]{nguyen2022universality}}]\label{thm:from_nvp}
In the setting of \Cref{thm:flag}, the marginal distribution of isomorphism types of the $k$-tuple of groups $\cok(M_1)_P,\cok(M_1M_2)_P,\ldots,\cok(M_1\cdots M_k)_P$ converges to the marginal distribution on isomorphism types $(G_1,\ldots,G_k)$ of the probability distribution on the right-hand side of \eqref{eq:main_flag_intro}.
\end{theorem}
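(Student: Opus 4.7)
The plan is to apply the moment method: for each surjective flag $\mathbf{H}=(H_k\onto\cdots\onto H_1)$ of finite abelian $P$-groups, compute the flag surjection moment
\[
    M_\mathbf{H}^{(n)} := \E\bracks*{\,\#\Sur_{\Fl_{k,P}}\bigl(\bfcok(M_1,\ldots,M_k)_P,\,\mathbf{H}\bigr)},
\]
show $M_\mathbf{H}^{(n)}\to M_\mathbf{H}^{(\infty)}$ as $n\to\infty$, and identify $M_\mathbf{H}^{(\infty)}$ with the surjection moment of the Cohen--Lenstra flag distribution on the right-hand side of \eqref{eq:main_flag_intro}. Convergence in distribution then follows from the Sawin--Wood moments-determine-distribution criterion \cite{sawin2022moment}, applied to the category $\Fl_{k,P}$.

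The first step is to describe flag surjections concretely. Since each natural projection $\cok(M_1\cdots M_i)_P \onto \cok(M_1\cdots M_{i-1})_P$ is induced from the identity on $\Z^n$, the flag commutativity forces every component of a flag surjection to be determined by its top component. Writing $h_i: H_i \onto H_{i-1}$ for the structure maps of $\mathbf{H}$, a flag surjection from $\bfcok(M_1,\ldots,M_k)_P$ to $\mathbf{H}$ is exactly a group surjection $f:\Z^n \onto H_k$ such that for each $1 \le i \le k$ the composite $f_i := h_{i+1}\circ\cdots\circ h_k\circ f$ vanishes on $M_1\cdots M_i(\Z^n)$. Thus $M_\mathbf{H}^{(n)}$ equals a sum over surjections $f$ of the probability that this nested chain of vanishing conditions is satisfied.

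To estimate each probability I would condition on $M_1,\ldots,M_{i-1}$ successively: the $i$-th condition becomes $M_i(\Z^n)\subeq \ker(f_i \circ M_1\cdots M_{i-1})$, a linear constraint on $M_i$ whose target sublattice is determined by the preceding data. Iterating this, the $\alpha,P$-balanced Fourier and anti-concentration estimates of \cite{nguyen2022universality} should give uniform control on each conditional probability, and after summing over equivalence classes of $f$ modulo $\Aut(H_k)$ one obtains a closed-form limit $M_\mathbf{H}^{(\infty)}$ that depends only on $\mathbf{H}$, not on the specific $\alpha,P$-balanced law.

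Finally, to identify $M_\mathbf{H}^{(\infty)}$ with the target moment, I would compare with the Haar computation of \cite{huang2024cokernel}: Haar-distributed matrices over $\Z_p$ already satisfy the $\alpha,p$-balanced hypothesis with $\alpha = 1-1/p$, so by the universality just established their asymptotic moments must equal $M_\mathbf{H}^{(\infty)}$; on the other hand those moments are computed in \cite{huang2024cokernel} and shown to be the surjection moments of the Cohen--Lenstra flag distribution. The main obstacle will be the moment estimate: the vanishing conditions for different $M_i$ are tightly coupled through the partial products $M_1\cdots M_i$, and organizing the bookkeeping so that per-matrix estimates from \cite{nguyen2022universality} can be iterated $k$ times with tight enough uniform bounds to verify the moment hypothesis of Sawin--Wood across all $\mathbf{H}$ is the principal technical challenge.
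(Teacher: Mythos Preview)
The paper does not give its own proof of this statement: \Cref{thm:from_nvp} is presented as a citation of \cite[Theorems 1.2 and 1.3]{nguyen2022universality}, where it was proved using \emph{joint group moments} $\E\bigl[\prod_i \#\Sur(\cok(M_1\cdots M_i),H_i)\bigr]$ rather than flag moments. So there is nothing in this paper to compare your argument against directly.

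What you have actually sketched is a proof of the stronger \Cref{thm:flag}, from which \Cref{thm:from_nvp} follows by passing to marginals. Your outline---Sawin--Wood moment method, a moment computation by iterated conditioning on the $M_i$, and identification of the limit via the Haar case of \cite{huang2024cokernel}---is precisely the paper's strategy for \Cref{thm:flag}. There are, however, two substantive deviations worth flagging. First, you propose to run Sawin--Wood in $\Fl_{k,P}$; the paper instead works in the subcategory $\CC_P$ of \Cref{def:C}, whose morphisms impose $\alpha(s_i(\CF))=s_i(\CF')$ (equality, not containment). The remark after \Cref{lem:replace} explains why: well-behavedness of the limiting moment is checkable in $\CC_P$ via \cite[Lemma~6.23]{sawin2022moment}, whereas no such shortcut is available in $\Fl_{k,P}$. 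This is a real technical obstacle you have glossed over. Second, the $\CC_P$-epimorphism count requires $\im(FM_1\cdots M_{k-i})=H_i$ \emph{exactly} at every step (see \eqref{eq:sur-description}), not merely that $f_i$ vanish on the image as you wrote; this exact-image condition is what drives the clean code/non-code induction of \Cref{prop:single:k} and yields limiting moments identically equal to $1$.

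One small correction: Haar-distributed matrices over $\Z_p$ are not $\Z$-valued, so they are not literally $\alpha,p$-balanced in the sense of \Cref{def:alpha_p_balanced}. The paper bridges this (in the proof of \Cref{thm:flag} in \Cref{sec:proof_and_state_moments}) by passing to matrices uniform on $\{0,\dots,p^N-1\}$, which are $\alpha,p$-balanced and agree with Haar modulo $p^N$.
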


\cite[Theorem 1.2]{nguyen2022universality} also gave a more `explicit' form of this distribution in terms of surjections between $G_1,\ldots,G_k$ and automorphisms of these groups, but the above formulation was the most structurally natural and the most closely in parallel with the original Cohen-Lenstra distribution. This formulation gave strong reason to suspect \Cref{thm:flag}, which was conjectured at the end of Section 10 of \cite{nguyen2022universality}. For a very nice matrix model where explicit computations were possible, the second author subsequently proved the following:

\begin{theorem}[{\cite[Second part of Theorem 1.1]{huang2024cokernel}}]
The conclusion of \Cref{thm:flag} is true if instead $P=\set{p}$ is a single prime and $M_1,\ldots,M_k$ have entries in the $p$-adic integers $\Z_p$ which are distributed by the additive Haar measure.
\end{theorem}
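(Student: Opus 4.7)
The plan is to use the method of moments. Assuming a flag-analog of Wood's moment theorem---that any probability distribution on surjective $k$-flags of finite abelian $p$-groups with sufficiently controlled growth is determined by its surjection moments $\E|\Sur_{\Fl_{k,p}}(\mathbf{F}, \mathbf{G})|$ over finite surjective $k$-flags $\mathbf{G}$---it suffices to prove convergence of these moments as $n\to\infty$ to the corresponding moments of the Cohen-Lenstra flag measure. Those limiting moments can be computed directly via groupoid cardinality in the category $\Fl_{k,p}$.

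A compatible tuple of surjections $(\phi_k, \dots, \phi_1)$ with $\phi_i: \cok(N_i)_p \onto G_i$ (where $N_i = M_1 \cdots M_i$) is determined uniquely by $\phi_k$: the other surjections are recovered as $\phi_i = \pi_i \circ \phi_k$, where $\pi_i: G_k \onto G_i$ is the canonical surjection inside $\mathbf{G}$. Lifting $\phi_k$ to a surjection $\tilde\phi: \Z_p^n \onto G_k$, the compatibility conditions on the flag become $\tilde\phi(N_i \Z_p^n) \subseteq K_i$, where $K_i := \ker \pi_i$. Interchanging expectation and summation gives
$$\E|\Sur_{\Fl_{k,p}}(\bfcok(M_1,\dots,M_k)_p, \mathbf{G})| = \sum_{\tilde\phi : \Z_p^n \onto G_k} \Prob\!\left(\tilde\phi(N_i \Z_p^n) \subseteq K_i \text{ for all } i = 1, \dots, k\right).$$

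To evaluate the probability, condition successively on $M_1, \dots, M_{k-1}$. Given the earlier matrices, the condition on $M_i$ is that every column of $M_i$ lies in the sub-$\Z_p$-module $(\tilde\phi \circ N_{i-1})^{-1}(K_i) \subseteq \Z_p^n$. Because Haar measure on $\Mat_n(\Z_p)$ has iid Haar-distributed columns, the conditional probability is the negative $n$th power of the index $[(\tilde\phi \circ N_{i-1})(\Z_p^n) : (\tilde\phi \circ N_{i-1})(\Z_p^n) \cap K_i]$. I would then expand the joint probability as a sum over image chains $G_k = J_0 \supseteq J_1 \supseteq \cdots \supseteq J_k = 0$ with $J_i \subseteq K_i$ (where $J_i := \tilde\phi(N_i \Z_p^n)$), each weighted by the Haar probability of realizing that chain; by Haar invariance this factors nicely over successive steps as an explicit product of index counts among subgroups of $G_k$. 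Taking $n \to \infty$ should suppress all but the dominant term, which one then matches against the Cohen-Lenstra flag moment.

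The main obstacle is the bookkeeping for the iterated Haar computation---identifying the correct main term in the $n\to\infty$ limit and verifying its equality with the closed-form Cohen-Lenstra flag moment. A possibly cleaner alternative is induction on $k$, using the almost-sure short exact sequence $0 \to \cok(M_k)_p \to \cok(N_k)_p \to \cok(N_{k-1})_p \to 0$ (valid because $N_{k-1}$ is a.s. injective): the sub-flag $(\cok(N_{k-1})_p \onto \cdots \onto \cok(N_1)_p)$ is Cohen-Lenstra by the inductive hypothesis, the quotient $\cok(M_k)_p$ is independent and Cohen-Lenstra by the Friedman-Washington result, and Haar invariance of $M_k$ should render the extension class uniform among elements of $\Ext^1(\cok(N_{k-1})_p, \cok(M_k)_p)$, propagating the Cohen-Lenstra flag measure to the full flag.
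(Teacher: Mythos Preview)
This statement is not proved in the paper; it is quoted from \cite{huang2024cokernel} and used as a black-box input to identify the limiting distribution in the proof of \Cref{thm:flag} (see the paragraph in \Cref{sec:proof_and_state_moments} beginning ``While computing the distribution given the moment is a difficult task\ldots''). There is therefore no proof here to compare your proposal against.

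On its own merits: your moment approach is a reasonable outline, but two points need care. First, the $\Fl_{k,p}$-surjections you count impose $\tilde\phi(N_i\Z_p^n)\subseteq K_i$, whereas the $\CC_P$-epimorphisms this paper works with require the equality $\tilde\phi(N_i\Z_p^n)=K_i$ (cf.\ \eqref{eq:sur-description}). These give genuinely different moment functions---yours are sums of the paper's over subflags and are not identically $1$ in the limit---and the remark after \Cref{lem:replace} explicitly warns that the Sawin--Wood well-behavedness check is not immediate for $\Fl_{k,p}$. So you cannot simply borrow the determinacy you assume; you would either have to switch to $\CC_P$-moments or separately establish determinacy and compute both the Haar moments and the Cohen--Lenstra flag moments in your $\Fl_{k,p}$ sense. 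Second, ``taking $n\to\infty$ should suppress all but the dominant term'' and ``one then matches against the Cohen--Lenstra flag moment'' are precisely where all the content lies; neither side is computed.

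Your inductive alternative via $0\to\cok(M_k)_p\to\cok(N_k)_p\to\cok(N_{k-1})_p\to 0$ is attractive, but the assertion that ``Haar invariance of $M_k$ should render the extension class uniform'' is the crux and is unjustified as stated. Conditioning on the isomorphism types of $\cok(M_k)_p$ and $\cok(N_{k-1})_p$ does not obviously yield a uniform extension class: that class depends on how the specific submodule $N_{k-1}\Z_p^n$ sits inside $\Z_p^n$, and Haar invariance of $M_k$ does not act transitively on the relevant data once $N_{k-1}$ is fixed. Turning this heuristic into a proof is essentially the content of the cited result.
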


The methods and results in these previous works also feature in our proof of \Cref{thm:flag}. The main tool to prove universality of cokernels of random matrices (as groups) is the \emph{moment method} developed by Wood \cite{W0}, which computes limits of $\E[\#\Sur(\cok(A),H)]$ where $H$ is a fixed group, and uses these limits to infer convergence in distribution of $\cok(A)_P$. The work \cite{nguyen2022universality} generalized this to joint distributions using joint moments $\E[\#\Sur(\cok(M_1 \cdots M_k),H_k) \cdots \#\Sur(\cok(M_1),H_1)]$ (as, independently, did Lee \cite{lee2024mixed}), but since these joint moments still only used group homomorphisms, they could only capture the group structure. 

Shortly after \cite{nguyen2022universality} appeared, Sawin and Wood \cite{sawin2022moment} gave a general moment method framework for random objects in so-called \emph{diamond categories}, which in particular includes abelian groups with various extra structures such as flags, pairings, etc. This has since been used in the aforementioned work on matrix polynomials \cite{cheong2023cokernel}, and to verify universality of Jacobian groups of random groups together with their canonical alternating pairings by Hodges \cite{hodges2023distribution}. Our proof combines
\begin{enumerate}
\item[(I)] The results of \cite{sawin2022moment}, which reduce the universality of the limit in \Cref{thm:flag} to moment computations (but do not actually compute explicitly what the limit is), \label{item:use_sw}
\vskip .05in
\item[(II)] Modifications of the computations in \cite{nguyen2022universality}, to actually compute the moments, and \label{item:compute_moments}
\vskip .05in
\item[(III)] The result of \cite{huang2024cokernel} in the $p$-adic Haar example, to explicitly identify the limit distribution. \label{item:find_limit}
\end{enumerate}
While our \Cref{thm:flag} is strictly more general than the results of \cite{nguyen2022universality}, our proofs are substantially shorter. This seems to be because the versions of moments needed for the machinery of \cite{sawin2022moment} are better aligned with the inductive structure of successive matrix products than the joint moments used in \cite{nguyen2022universality}.

\subsection{A corollary: universality of conditional convolution}\label{subsection:consequences}

Because the determinant is multiplicative and $\abs{\cok(M)} = \abs{\det(M)}$ for nonsingular $M \in \Mat_n(\Z)$, 
\begin{equation}
    \abs{\cok(M_1 M_2)_P} = \abs{\cok(M_1)_P} \cdot \abs{\cok(M_2)_P}.
\end{equation}

In particular, the cokernel $\cok(M_1 M_2)_P$ should not have the same asymptotic Cohen-Lenstra distribution since it is typically larger---and never smaller---than $\cok(M_1)$. One should view $\cok(M_1 M_2)_P$ a kind of `randomized convolution' of the two cokernels $\cok(M_1)_P$ and $\cok(M_2)_P$, which are themselves random. To understand this randomized convolution, it is natural to first condition on $\cok(M_1)_P$ and $\cok(M_2)_P$ and then consider the distribution of $\cok(M_1M_2)_P$, i.e. to consider the probabilities
\begin{equation}
    \Prob(\cok(M_1 M_2)_P \simeq G \mid \cok(M_1)_P \simeq H \text{ and }\cok(M_2)_P \simeq K)
\end{equation}
for any three abelian $P$-groups $G,H,K$. From the $k=2$ case of \Cref{thm:flag} we obtain:

\begin{corollary}\label{thm:conv_cor_intro}
    Let $M_1, M_2$ be independent $n \times n$ random matrices over $\Z$ with $\alpha,P$-balanced distribution in the sense of \Cref{def:alpha_p_balanced}, and let $H,K$ be any finite abelian $P$-groups. Then
    \begin{multline}\label{eq:conv_cor_intro}
        \lim_{n \to \infty} \Prob(\cok(M_1 M_2)_P \simeq G \mid \cok(M_1)_P \simeq H \text{ and }\cok(M_2)_P \simeq K) \\ 
        = \frac{\abs{\Aut(K)} \cdot \abs{\Aut(H)}}{\abs{\Aut(G)}} \abs{\{N \leq G: N \simeq K, G/N \simeq H\}}.
    \end{multline}
\end{corollary}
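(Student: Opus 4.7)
The plan is to deduce this from the $k=2$ case of Theorem \ref{thm:flag}. The key algebraic input is that for any nonsingular $M_1 \in \Mat_n(\Z)$, multiplication by $M_1$ induces an isomorphism $\Z^n/M_2\Z^n \xrightarrow{\sim} M_1\Z^n/M_1M_2\Z^n$, identifying $\cok(M_2)_P$ with the kernel of the flag surjection $\cok(M_1M_2)_P \onto \cok(M_1)_P$ after tensoring with $\Z_{(P)}$. On the conditioning event both $\cok(M_i)_P$ are finite by hypothesis, which forces the $\cok(M_i)$ themselves to be finite ($\Z_{(P)}$ is infinite, so any free $\Z$-summand of $\cok(M_i)$ would survive localization), and hence $M_1, M_2$ are automatically nonsingular there. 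Thus the identification holds deterministically on the conditioning event, with no ``high probability'' approximation needed.

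Using this identification, Theorem \ref{thm:flag} applied to each of the finitely many isomorphism classes of surjective flags $\mathbf{G} = (G \onto H)$ with kernel $\simeq K$ yields
\begin{equation*}
    \lim_{n \to \infty}\Prob(\cok(M_1M_2)_P \simeq G,\ \cok(M_1)_P \simeq H,\ \cok(M_2)_P \simeq K) = C_P^2 \sum_{[\mathbf{G}]:\, \ker \simeq K} \frac{1}{\abs{\Aut_{\Fl_{2,P}}(\mathbf{G})}},
\end{equation*}
where $C_P := \prod_{p \in P}\prod_{i \geq 1}(1-p^{-i})$. This sum is evaluated by a short orbit-stabilizer computation: $\Aut(G) \times \Aut(H)$ acts on $\Surj(G, H)$ via $(\alpha, \beta) \cdot \pi = \beta \pi \alpha^{-1}$, its orbits are precisely the flag isomorphism classes with underlying groups $(G, H)$, and the stabilizer of $\pi$ is $\Aut_{\Fl_{2,P}}(G \xrightarrow{\pi} H)$. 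Parametrizing surjections with $\ker \pi \simeq K$ by pairs $(N, \phi)$, where $N \leq G$ satisfies $N \simeq K$ and $G/N \simeq H$ and $\phi: G/N \xrightarrow{\sim} H$ is an isomorphism, gives
\begin{equation*}
    \abs{\{\pi \in \Surj(G,H): \ker \pi \simeq K\}} = \abs{\Aut(H)} \cdot \abs{\{N \leq G : N \simeq K,\ G/N \simeq H\}},
\end{equation*}
so that
\begin{equation*}
    \sum_{[\mathbf{G}]:\, \ker \simeq K} \frac{1}{\abs{\Aut_{\Fl_{2,P}}(\mathbf{G})}} = \frac{\abs{\{N \leq G : N \simeq K,\ G/N \simeq H\}}}{\abs{\Aut(G)}}.
\end{equation*}

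Finally, by independence of $M_1$ and $M_2$ combined with the $k=1$ case of Theorem \ref{thm:from_nvp}, the conditioning event has limit probability $C_P^2 / (\abs{\Aut(H)} \cdot \abs{\Aut(K)})$, which is strictly positive. Hence the conditional probability converges to the ratio of the two limits; the $C_P^2$ factors cancel and one recovers exactly \eqref{eq:conv_cor_intro}. I do not foresee any serious obstacle: the argument is essentially bookkeeping on top of Theorem \ref{thm:flag}, and the only conceptual input is the identification of $\cok(M_2)_P$ with the kernel of the flag surjection, which is precisely why flags (rather than tuples of cokernels) are the natural object of study.
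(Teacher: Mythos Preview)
Your proposal is correct and follows essentially the same route as the paper: identify $\cok(M_2)_P$ with the kernel of the flag surjection via multiplication by $M_1$ (valid since the conditioning forces $M_1$ nonsingular), apply the $k=2$ case of \Cref{thm:flag} to each flag isomorphism class with that kernel, evaluate the resulting sum by orbit--stabilizer for the $\Aut(G)\times\Aut(H)$ action on $\Surj(G,H)$, and divide by the $k=1$ limit for the conditioning event. The paper's proof is the same argument with the same ingredients in the same order.
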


\begin{rmk}
    The probability in \eqref{eq:conv_cor_intro} is symmetric in $H$ and $K$. The left-hand side clearly is, since $\cok(M_1M_2) = \cok(M_2^TM_1^T)$ and the transpose matrices are also $\alpha,P$-balanced. The symmetry of the cardinality on the right-hand side is well-known and not difficult, see \cite[Chapter II]{mac}.
\end{rmk}

Reducing modulo $p$ yields universality of the conditional corank distribution for random matrices over finite fields, which is a bit more explicit:

\begin{corollary}
    \label{thm:rank_conv_intro} 
    Fix $\alpha \in (0,1/2]$. Let $M_1,M_2$ be independent $n \times n$ random matrices over $\F_p$, with entries that do not take any value with probability more than $1-\alpha$. Then for any $a,b,c \in \Z_{\geq 0}$,
    \begin{multline}
        \lim_{n \to \infty} \Prob\Big(\corank(M_1M_2) = c \mid  \corank(M_1)=a \text{ and }\corank(M_2) = b\Big) \\ 
        = p^{-(c-a)(c-b)}\frac{(p^{-1};p^{-1})_a(p^{-1};p^{-1})_b}{(p^{-1};p^{-1})_{c-b}(p^{-1};p^{-1})_{a+b-c}(p^{-1};p^{-1})_{c-a}},
    \end{multline}
    where $(p^{-1};p^{-1})_m := \prod_{i=1}^m (1-p^{-i})$ is the $q$-Pochhammer symbol, and we adopt the standard convention that $1/(p^{-1};p^{-1})_m=0$ if $m\in \Z_{<0}$, so that the right-hand side is zero unless $a,b\leq c \leq a+b$.
\end{corollary}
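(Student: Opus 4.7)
Our plan is to deduce \Cref{thm:rank_conv_intro} from \Cref{thm:conv_cor_intro} in two stages: a universality reduction, then an explicit computation in the $\Z_p$-Haar random model.

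For the universality reduction, realize each $\F_p$-valued random matrix $\bar M_i$ as the mod-$p$ reduction of a $\Z$-valued lift $\tilde M_i$ obtained by fixing arbitrary lifts $\F_p \hookrightarrow \Z$; this lift is $\alpha,p$-balanced in the sense of \Cref{def:alpha_p_balanced}. Writing $d(G) = \dim_{\F_p}(G/pG)$ for the minimum number of generators of a finite abelian $p$-group $G$, we have
\[
\corank_{\F_p}(\bar M_i) = \dim_{\F_p}\bigl(\cok(\tilde M_i) \otimes_\Z \F_p\bigr) = d(\cok(\tilde M_i)_p),
\]
and analogously for the product. Combining \Cref{thm:conv_cor_intro} with the universality of each marginal single-matrix cokernel, the joint distribution of $(\cok(\tilde M_1)_p, \cok(\tilde M_2)_p, \cok(\tilde M_1 \tilde M_2)_p)$ converges to a universal limit as $n \to \infty$ (pointwise on the countable set of isomorphism triples, with tightness automatic since the conditional distribution in \Cref{thm:conv_cor_intro} is supported on groups $G$ of fixed size $\abs{H}\abs{K}$). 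Consequently the joint $\F_p$-corank distribution has a universal limit, and so does the conditional probability appearing in \Cref{thm:rank_conv_intro}.

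To evaluate the limit, work in the Haar model: take $\bar M_1, \bar M_2$ to be independent uniform random elements of $\Mat_n(\F_p)$. From $\ker(AB) = B^{-1}(\ker A)$ we obtain
\[
\corank(\bar M_1 \bar M_2) = \corank(\bar M_2) + \dim(\ker \bar M_1 \cap \im \bar M_2),
\]
so $\corank(\bar M_1 \bar M_2) = c$ is equivalent to $\dim(\ker \bar M_1 \cap \im \bar M_2) = c - b$. By left- and right-$\GL_n(\F_p)$-invariance of the uniform distribution on each corank stratum, conditioning on $\corank(\bar M_1) = a$ and $\corank(\bar M_2) = b$ makes $\ker \bar M_1$ and $\im \bar M_2$ independent and uniformly distributed on the Grassmannians of dimensions $a$ and $n-b$ in $\F_p^n$.

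A classical count then gives that for a fixed $a$-dimensional $U \subseteq \F_p^n$, the number of $(n-b)$-dimensional $W \subseteq \F_p^n$ with $\dim(U \cap W) = k$ equals $\binom{a}{k}_p \binom{n-a}{n-b-k}_p p^{(a-k)(n-b-k)}$, while the total is $\binom{n}{b}_p$. Using the identity $\binom{m}{j}_p = p^{j(m-j)}\binom{m}{j}_{p^{-1}}$ and the limit $\binom{n}{j}_{p^{-1}} \to 1/(p^{-1};p^{-1})_j$ as $n \to \infty$, the $p$-power factors in the ratio combine to net exponent $k(a-b-k)$. Substituting $k = c - b$ (so that $a - k = a + b - c$ and $b + k - a = c - a$) gives exponent $-(c-a)(c-b)$, and the $q$-Pochhammer symbols rearrange to match the stated formula. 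The only real work is this $q$-binomial asymptotic bookkeeping; the universality reduction is essentially immediate from \Cref{thm:conv_cor_intro}.
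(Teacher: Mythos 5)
Your proposal is correct, and its first half is the same move the paper makes: reduce mod $p$, observe that the limiting conditional corank probability is determined by the universal joint cokernel limit (the $k=2$ case of \Cref{thm:flag}, or equivalently \Cref{thm:conv_cor_intro} together with the single-matrix marginals), and hence compute it in whichever $\alpha$-balanced model is most convenient. Where you genuinely diverge is the explicit computation in the uniform model. The paper writes each corank-$a$ (resp.\ corank-$b$) uniform matrix as $U\diag(I_{n-a},0_a)V$ with Haar $U,V\in\GL_n(\F_p)$, reduces the question to the rank of an $(n-a)\times(n-b)$ corner of a uniform invertible matrix, and cites \cite[Lemma 6.7]{van2023reflecting} for that rank distribution. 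You instead use $\corank(M_1M_2)=\corank(M_2)+\dim(\ker M_1\cap\im M_2)$, note that conditionally $\ker M_1$ and $\im M_2$ are independent and uniform on their Grassmannians, and invoke the classical count of $(n-b)$-dimensional subspaces meeting a fixed $a$-dimensional subspace in dimension $k$, finishing with the $q$-binomial asymptotics; your bookkeeping is right (net exponent $k(a-b-k)=-(c-a)(c-b)$ at $k=c-b$, and the Pochhammer factors match). Your route is self-contained and elementary, at the cost of doing the combinatorial count by hand; the paper's is shorter by outsourcing it. One small point to tighten (present equally in the paper's own write-up): passing from pointwise convergence of cokernel isomorphism-class probabilities to convergence of the corank events requires summing over the infinitely many classes $(H,K)$ of ranks $(a,b)$, not just the finitely many $G$ of size $\abs{H}\abs{K}$; this is fine because pointwise convergence to a genuine probability measure on a countable discrete space upgrades to total variation convergence, but your tightness parenthetical as stated only covers the $G$-sum.
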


The case of a single prime $P=\{p\}$ in \Cref{thm:conv_cor_intro}, which lies between the general case and \Cref{thm:rank_conv_intro}, is particularly interesting. Any abelian $p$-group, including the cokernels $G,H,K$ in \Cref{thm:conv_cor_intro}, is isomorphic to a group 
\begin{equation}\label{eq:G_la}
G_\la := \bigoplus_{i=1}^n \Z/p^{\la_i}\Z.    
\end{equation}
These $p^{\la_i}$ also feature as diagonal entries in the Smith normal form of the matrix, an analogue of singular value decomposition, and it is natural to view $\la_1 \geq \la_2 \geq \ldots \geq \la_n \geq 0$ as analogues of singular values---see e.g. \cite{meszaros2024phase,van2020limits,van2023local}. In this analogy, the right-hand side of \eqref{eq:conv_cor_intro} may be seen as a discrete analogue of the finite free multiplicative convolution $\boxtimes$ in classical random matrix theory, and there are close parallels between the two at the level of formulas (see \Cref{section:formulas}). \Cref{thm:conv_cor_intro} is a type of `conditional convolution' universality whose complex analogue is easy to state, but still not established to the best of our knowledge, as we now explain.

\subsection{Structural connections to complex matrices}\label{subsec:structural_connections_to_complex_matrices}

Let $M$ be a random Hermitian matrix of size $n$, with real eigenvalues $\la_{1},\ldots, \la_{n}$. The empirical spectral distribution (ESD) of $M$ is the point measure 
$$\mu_{M} = \frac{1}{n} \sum_{i=1}^{n}\delta_{\la_{i}}.$$ 
Let $M_{1}, M_{2}$ be two independent matrices of size $n$ distributed by the GUE, so the entries are independent Gaussians subject to the Hermitian constraint. This is a very symmetric distribution which enjoys the following properties, not known for general Hermitian Wigner matrices: 

\begin{itemize}
  \item Condition on the ESD $\mu_{M_{1}}$ and $\mu_{M_{2}}$ of $M_{1}, M_{2}$ to be fixed measures, chosen such that they converge (weakly) to probability distributions $\mu_{1}, \mu_{2}$ on the real line. Then a celebrated result by Voiculescu \cite{Voi1} (see also \cite{Speicher} and \cite[Corollary 5.4.11]{AGZ}) states that the ESD of $M_{1}+M_{2}$ converges weakly almost surely to the additive free convolution  $\mu_{1} \boxplus_{} \mu_{2}$ of $\mu_{1}$ and $\mu_{2}$.
  \vskip .05in
  \item Similarly, condition that the eigenvalues of $M_{1}$ and $M_{2}$ are some fixed positive numbers, chosen so that as $n \to \infty$ the ESDs converge weakly to probability measures $\mu_{1}$ and $\mu_{2}$ on $\mathbb{R}_{\geq 0}$. Then the result of Voiculescu shows that the limiting empirical distribution of the singular values of $M_{1}M_{2}$ converges weakly almost surely to the multiplicative free convolution $\mu_{1} \boxtimes \mu_{2}$.
\end{itemize}
 For completeness, we refer the reader to Appendix \ref{section:formulas} for definitions of additive and multiplicative free convolutions. 

For fixed $n$, the distribution of the ESDs (equivalently, $n$-tuples of eigenvalues) of sums and products are also known. For $M_1,M_2$ GUE matrices conditioned on fixed eigenvalues $\vec{r}^{(1)}, \vec{r}^{(2)}$, the $n$-tuple of eigenvalues of $M_1+M_2$ and $M_1M_2$ follow `finite free convolution' laws $\delta_{\vec{r}^{(1)}} \boxplus_2 \delta_{\vec{r}^{(2)}}$ and\footnote{A minor point: in the multiplicative case, the indices $\vec{r}^{(i)}$ traditionally refer to logarithms of singular values rather than singular values themselves.} $\delta_{\vec{r}^{(1)}} \boxtimes_2 \delta_{\vec{r}^{(2)}}$ on $\R^{n}$, respectively. Both here and above, in the product case the matrices do not have to be Hermitian; for a matrix with iid complex Gaussian entries, one just considers singular values instead of eigenvalues and obtains the same results.

These finite free convolutions are prelimit versions of the free additive and multiplicative convolutions above, and are reasonably explicit, with formulas given in terms of multivariate Bessel and Heckman-Opdam functions by Gorin-Marcus \cite{GM}. Similarly, in the cokernel case, the limiting convolution operation we obtain in \Cref{thm:conv_cor_intro} is a large $n$ limit of exact formulas from symmetric function theory. Namely, there is an operation $\boxtimes_{HL,n}$ defined explicitly in terms of so-called Hall-Littlewood polynomials, such that
\begin{equation}\label{eq:finite_n_p-adic}
    \Pr\Big(\cok(M_1M_2)_p \cong G_\nu \mid \cok(M_1)_p \cong G_\la, \cok(M_2) \cong G_\mu\Big) = (\delta_{\la} \boxtimes_{HL,n} \delta_{\mu})(G_\nu)
\end{equation}
where $M_1,M_2$ are Haar-distributed $p$-adic matrices of size $n$ and $\delta_\cdot$ is the Dirac measure, see \Cref{thm:VP}. The explicit formulas are exact analogues of the ones for complex random matrices, both of which we recall in \Cref{section:formulas}. As $n \to \infty$, the random measure $\delta_{\la} \boxtimes_{HL,n} \delta_{\mu}$ converges to a random measure $\delta_{\la} \boxtimes_{HL} \delta_{\mu}$, which describes the limit distribution in \Cref{thm:conv_cor_intro}. We refer to \Cref{section:formulas} for details.

For matrices with different distribution from the ones above, in either the spectra or cokernel case, one does not see the same statistics at finite $n$; the results mentioned earlier are very specific to those distributions. In the $n \to \infty$ limit, while many universality results exist for matrices without conditioning, conditioning on spectra/cokernels as above makes the problem much harder outside of the invariant cases. In the complex case, to our knowledge no results of the above form have been extended to matrices with general random entries, and it is very unclear to us how to do so. For instance, following Voiculescu's results, one can consider independent Wigner matrices $M_{1}, M_{2}$ of size $n$ with general random entries, say of smooth density of mean zero and variance one. Conditioning on $M_{1}$ and $M_{2}$ having limiting ESDs $\mu_{1}$ and $\mu_{2}$ as $n\to \infty$, respectively, what is the limiting ESD of $M_{1} + M_{2}$? It is natural to expect that it is still $\mu_1 \boxplus \mu_2$ given known universality results for Wigner matrices, but does not seem easy to prove. A similar conjecture can be made for the product: under the same assumptions\footnote{In the product case, as discussed before, we condition on \( M_1 \) and \( M_2 \) being either general Wigner matrices or general iid matrices, whose limiting ESDs or limiting singular value distributions are \( \mu_1 \) and \( \mu_2 \), respectively, supported on \( \mathbb{R}_{\geq 0} \).}, the limiting singular value distribution of \( M_1 M_2 \) is expected to be \( \mu_1 \boxtimes \mu_2 \). 

The lack of such conditional convolution universality results for eigenvalues and singular values makes it all the more surprising that, in \Cref{thm:conv_cor_intro}, we obtain exactly such a universality result on the conditional convolution over $\Z$.

\textbf{Acknowledgments.} We thank Nicholas Cook, Jordan Ellenberg and Melanie Matchett Wood for helpful conversations. HN is supported by a Simons Travel Grant, TSM-00013318. RVP was partially supported by the European Research Council (ERC), Grant Agreement No. 101002013.

\subsection{Plan of paper}

\Cref{sec:diamond} introduces the algebraic setting necessary to apply the results of \cite{sawin2022moment}, This setup is then used in \Cref{sec:proof_and_state_moments} to reduce \Cref{thm:flag} to moment computations, which take most of the rest of the paper. In \Cref{sec:description_of_moments} we reduce to the case of random matrices over a finite ring $\Z/a\Z$, and state a number of basic lemmas for such matrices in \Cref{sec:supporting_lemmas}. The main moment computation is in \Cref{sec:moment_computation}, while \Cref{sec:verify_well_behaved} holds a verification of the `well-behaved' hypothesis necessary to apply \cite{sawin2022moment}. Finally, in \Cref{sec:prove_conv_cor} we specialize to the case of two matrices to prove \Cref{thm:conv_cor_intro} and deduce \Cref{thm:rank_conv_intro}. In \Cref{section:formulas} we give various definitions and formulas alluded to in the discussion of parallelism with complex matrix sums and products in \Cref{subsec:structural_connections_to_complex_matrices}.

\section{A diamond category of flags}\label{sec:diamond}
Let $P$ be a finite set of primes. The goal of this section is to define a subcategory $\CC_P$ of $\Fl_{k,P}$ that contains all objects and all isomorphisms in $\Fl_{k,P}$. We will perform the moment method of Sawin and Wood \cite{sawin2022moment} on $\CC_P$. It will turn out that the use of $\CC_P$ instead of $\Fl_{k,P}$ makes it easier to apply the techniques in \cite{sawin2022moment}.

\subsection{Injective flags}
For a finite abelian $P$-group $G$, an \textbf{injective $k$-flag} in $G$ is a sequence of nested subgroups
\begin{equation}
    \mathcal{F}=(H_1 \leq \dots \leq H_{k-1}\leq G).
\end{equation}
We say $H_i$ is the $i$-th \defn{step} of $\mathcal{F}$. Note that there are only $k-1$ steps $H_1,\dots,H_{k-1}$. By convention, we say the $0$-th step of $\mathcal{F}$ is $0$ and the $k$-th step of $\mathcal{F}$ is $G$. We denote by $s_i(\mathcal{F})$ the $i$-th step of $\mathcal{F}$, for $0\leq i\leq k$, which is useful when $H_i$ is not mentioned in the context.

Recall $\FinMod_P$ is the category of finite abelian $P$-groups. Consider the functor $\calFl_k:\FinMod_{P}\to \mathbf{Sets}$ that, object-wise, associates to an abelian $P$-group $G$ the set of injective $k$-flags in $G$. Morphism-wise, the functor $\calFl_k$ is defined as follows. For a homomorphism $\alpha:G\to G'$ of abelian $P$-groups and an injective $k$-flag 
\begin{equation}
    \mathcal{F}=(H_1 \leq \dots \leq H_{k-1}\leq G)\in \calFl_k(G),
\end{equation}
define $\alpha(\mathcal{F})\in \calFl_k(G')$ to be the injective $k$-flag $\mathcal{F}'$ 
\begin{equation}
    \mathcal{F}'=(\alpha(H_1)\leq \dots \leq \alpha(H_{k-1}) \leq G').
\end{equation}
Therefore, the $i$-th step of $\mathcal{F}'$ is
\begin{equation}
    s_i(\mathcal{F}')=\alpha(s_i(\mathcal{F}))
\end{equation}
for $0\leq i\leq k-1$. However, it is not true that $s_k(\mathcal{F}')=\alpha(s_k(\mathcal{F}))$ unless $\alpha$ is surjective, as $s_k(\mathcal{F}')=G'$ by convention. 

It is clear that $\calFl_k$ by our definition is a functor.

\subsection{The category of finite abelian $P$-groups together with an injective $k$-flag}

\begin{definition}\label{def:C}
Define $\CC_P$ to be the category $(\FinMod_P,\calFl_k)$ in the notation of \cite[Lemma~6.22]{sawin2022moment}. Namely, the objects of $\CC_P$ are pairs $(G,\mathcal{F})$, where $G\in \FinMod_P$ and $\mathcal{F}\in \calFl_k(G)$. A morphism $\alpha: (G,\mathcal{F})\to (G',\mathcal{F}')$ is a homomorphism $\alpha:G\to G'$ such that $\alpha(\mathcal{F})=\mathcal{F}'$. 
\end{definition}

\begin{lemma}\label{lem:epi}
    The category $\CC_P$ is a diamond category in the sense of \cite{sawin2022moment}. Moreover, a morphism $\alpha:(G,\mathcal{F})\to (G',\mathcal{F}')$ is an epimorphism in $\CC_P$ if and only if $\alpha$ is surjective. 
\end{lemma}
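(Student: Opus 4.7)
The plan is to dispatch the two claims in the lemma separately. For the diamond category assertion, I intend to invoke \cite[Lemma~6.22]{sawin2022moment} directly: that lemma produces a diamond category $(\mathcal{A},F)$ out of a diamond category $\mathcal{A}$ together with a functor $F:\mathcal{A}\to\mathbf{Sets}$ satisfying suitable compatibility axioms (preservation of the relevant finite limits, compatibility with the mono-epi factorization, etc.). Since $\FinMod_P$ is one of the standard diamond categories in the \cite{sawin2022moment} framework, the first assertion reduces to checking those axioms for the functor $\calFl_k$. This should be routine once unpacked: $\calFl_k(G)$ is a finite set built from the subgroup lattice of $G$, and images of subgroups under homomorphisms are subgroups, so the behavior of $\calFl_k$ on morphisms is compatible with the subgroup-theoretic operations at play.

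For the epimorphism assertion I would argue directly. The easy direction is immediate: if $\alpha$ is surjective on underlying groups, then for any two morphisms $\beta,\gamma$ in $\CC_P$ with $\beta\alpha=\gamma\alpha$, surjectivity of the underlying map forces $\beta=\gamma$ already at the level of group homomorphisms. For the converse I would argue contrapositively, using the key structural observation that the defining condition $\alpha(\mathcal{F})=\mathcal{F}'$ forces $\alpha(H_i)=H_i'$ for every $i$, so in particular $H_{k-1}'\subseteq\alpha(G)$. Therefore, if $\alpha(G)\subsetneq G'$, the quotient $Q:=G'/\alpha(G)$ is a nonzero abelian $P$-group and the projection $q:G'\to Q$ vanishes on every step of $\mathcal{F}'$.

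To block right-cancellation I would take $G'':=G'\oplus Q$ with flag $\mathcal{F}'':=(H_1'\oplus 0,\ldots,H_{k-1}'\oplus 0)$, and consider the two maps $\beta,\gamma:(G',\mathcal{F}')\to(G'',\mathcal{F}'')$ defined by $\beta(x):=(x,0)$ and $\gamma(x):=(x,q(x))$. Both are morphisms in $\CC_P$ because $q$ vanishes on $H_{k-1}'\supseteq H_i'$; they agree after precomposition with $\alpha$ because $q\circ\alpha=0$; and they are distinct because $q\neq 0$. This witnesses $\alpha$ as a non-epimorphism in $\CC_P$, completing the contrapositive.

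The main obstacle I anticipate is the first part: carefully verifying that the pair $(\FinMod_P,\calFl_k)$ meets every hypothesis of \cite[Lemma~6.22]{sawin2022moment}. Each hypothesis should follow formally from the pointwise definition of $\calFl_k$, but one has to be vigilant about the boundary conventions, in particular the terminal step $s_k(\mathcal{F})=G$, which is not sent to $\alpha(s_k(\mathcal{F}))$ under a non-surjective morphism and hence plays a different role from the internal steps $s_1,\ldots,s_{k-1}$. By contrast, the epimorphism characterization is an entirely explicit hands-on construction once one has recognized that the morphism condition automatically places all flag subgroups of the target inside the image of $\alpha$.
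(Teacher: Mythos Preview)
Your proposal is correct. The approach to the diamond-category assertion is the same as the paper's: both invoke \cite[Lemma~6.22]{sawin2022moment} (the paper first citing \cite[Lemma~6.1]{sawin2022moment} to see $\FinMod_P$ is diamond, and then applying Lemma~6.22 without further ado, whereas you anticipate checking hypotheses explicitly). The ``if'' direction of the epimorphism claim is also the same argument, just phrased differently: the paper says the forgetful functor $\CC_P\to\FinMod_P$ is faithful, which is exactly your observation that $\beta\alpha=\gamma\alpha$ as group maps already forces $\beta=\gamma$.

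The genuine difference is in the ``only if'' direction. The paper simply defers to the proof of \cite[Lemma~6.22]{sawin2022moment}, noting that the first two paragraphs of that proof already show that an epimorphism in $(\FinMod_P,\calFl_k)$ must be an epimorphism in $\FinMod_P$. You instead give a concrete, self-contained construction (the $G'\oplus Q$ target with the two maps $\beta,\gamma$), which is valid and is in fact the natural argument one would expect to find inside the cited Sawin--Wood proof. Your route makes the lemma's proof independent of inspecting the internals of \cite{sawin2022moment}, at the cost of a few extra lines; the paper's route is shorter but requires the reader to consult the cited argument. Neither approach has a gap.
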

\begin{proof}
    By \cite[Lemma~6.1]{sawin2022moment} with $R=\Z_{(P)}$ (see \eqref{def:ZP}), the category $\FinMod_P$ is a diamond category. The first assertion then follows directly from \cite[Lemma~6.22]{sawin2022moment}. 
    
    The ``if'' part of the second assertion follows from the fact that the forgetful functor $\CC\to \FinMod_P$ is faithful (note that $\Hom_\CC((G,\CF),(G',\CF'))$ is a subset of $\Hom_{\FinMod_P}(G,G')$ by construction). The ``only if'' part of the second assertion follows from the proof of \cite[Lemma~6.22]{sawin2022moment}. More precisely, the first two paragraphs of that proof show that if a morphism $\alpha: G \to G'$ in $\FinMod_p$ such that $\alpha(\mathcal{F})=\mathcal{F}'$ is an epimorphism $(G,\mathcal{F}) \to (G',\mathcal{F}')$, then $\alpha$ is an epimorphism in $\FinMod_P$. 
\end{proof}

\subsection{Comparison with surjective flags}
We define a functor $Q: \CC_P\to \Fl_{k,P}$ as follows. Given $(G,\mathcal{F})\in \CC_P$ with
\begin{equation}
    \mathcal{F}=(H_1 \leq \dots \leq H_{k-1}\leq G)\in \calFl_k(G),
\end{equation}
define $Q((G,\mathcal{F}))$ to be the surjective $k$-flag
\begin{equation}
    G/\mathcal{F}:=(G\onto G/H_1\onto \dots \onto G/H_{k-1}).
\end{equation}
Given two objects $(G,\mathcal{F})$ and $(G',\mathcal{F}')$ in $\CC_P$, we note that a morphism from $G/\mathcal{F}\to G'/\mathcal{F}'$ in $\Fl_{k,P}$ is a homomorphism $\alpha:G\to G'$ such that the following commutative diagram factors for all $1\leq i\leq k-1$:
\begin{equation}
    \begin{tikzcd}
        G & G/s_i(\CF) \\
        G' & G'/s_i(\CF')\\
        \arrow[from=1-1, to=1-2]
        \arrow[from=2-1, to=2-2]
        \arrow["\alpha", from=1-1, to=2-1]
        \arrow[dashed, from=1-2, to=2-2]
    \end{tikzcd}
\end{equation}
This is equivalent to the condition that $\alpha(s_i(\CF))\subeq s_i(\CF')$ for all $1\leq i\leq k-1$. On the other hand, a morphism from $(G,\CF)$ to $(G',\CF')$ in $\CC_P$ is a homomorphism $\alpha:G\to G'$ such that $\alpha(s_i(\CF))=s_i(\CF')$ for all $1\leq i\leq k-1$. This shows that $Q$ is well-defined as a functor in the obvious way, together with
\begin{lemma}\label{lem:replace}
    The functor $Q$ is faithful and essentially surjective, identifying $\CC_P$ as a subcategory of $\Fl_{k,P}$ that contains every object up to isomorphism. Moreover, $\CC_P$ contains every isomorphism in $\Fl_{k,P}$.
\end{lemma}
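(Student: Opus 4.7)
The plan is to unpack the morphism sets of both categories in terms of the underlying group homomorphism $\alpha\colon G\to G'$ and read off the claims by direct comparison. The paragraph preceding the lemma statement already observes that a morphism $Q((G,\CF))\to Q((G',\CF'))$ in $\Fl_{k,P}$ amounts to a homomorphism $\alpha\colon G\to G'$ with $\alpha(s_i(\CF))\subeq s_i(\CF')$ for $1\le i\le k-1$, whereas a morphism in $\CC_P$ requires the equality $\alpha(s_i(\CF))=s_i(\CF')$. This identifies the morphism set of $\CC_P$ as a subset of that of $\Fl_{k,P}$, so $Q$ is faithful and realizes $\CC_P$ as a (non-full) subcategory of $\Fl_{k,P}$.

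For essential surjectivity, I would start from an arbitrary surjective flag $\mathbf{G}=(G_k\onto\cdots\onto G_1)$ with connecting surjections $f_i\colon G_i\onto G_{i-1}$. Setting $G:=G_k$ and $H_i:=\ker(f_{k-i+1}\circ\cdots\circ f_k\colon G\onto G_{k-i})$ produces an injective $k$-flag $\CF=(H_1\le\cdots\le H_{k-1}\le G)$ in $G$, since enlarging the composition enlarges the kernel. The first isomorphism theorem then supplies canonical isomorphisms $G/H_i\simeq G_{k-i}$ compatible with all connecting surjections, so $Q((G,\CF))\simeq\mathbf{G}$ in $\Fl_{k,P}$.

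For the final assertion, suppose $(\alpha_k,\ldots,\alpha_1)$ is an isomorphism $\mathbf{G}\to\mathbf{G}'$ in $\Fl_{k,P}$; after applying essential surjectivity I may present this as an isomorphism $Q((G,\CF))\to Q((G',\CF'))$. Then $\alpha_k\colon G\to G'$ is a bijection, and for each $i$ it induces a bijection $G/s_i(\CF)\to G'/s_i(\CF')$. Surjectivity of the induced map follows from that of $\alpha_k$, while injectivity translates to $\alpha_k^{-1}(s_i(\CF'))\subeq s_i(\CF)$. Combining this with the containment $\alpha_k(s_i(\CF))\subeq s_i(\CF')$ and the bijectivity of $\alpha_k$ forces $\alpha_k(s_i(\CF))=s_i(\CF')$, which is precisely the defining condition for a $\CC_P$-morphism. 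The only real bookkeeping subtlety is tracking $\subeq$ versus $=$ throughout; no genuine obstacle is anticipated.
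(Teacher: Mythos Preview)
Your proposal is correct and follows essentially the same approach as the paper. The paper dispatches faithfulness and the isomorphism claim in one sentence by pointing to the preceding Hom set description, and gives the identical kernel construction $H_i=\ker(G_k\onto G_{k-i})$ for essential surjectivity; you have simply spelled out in more detail why bijectivity of the induced quotient maps upgrades the containment $\alpha_k(s_i(\CF))\subeq s_i(\CF')$ to equality.
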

\begin{proof}
    The faithfulness and the fact that $\CC_P$ contains every isomorphism in $\Fl_{k,P}$ follows from the description of Hom sets above. It only remains to show that $Q$ is essentially surjective. Given $\mathbf{G}=(G_k\onto \dots\onto G_1)\in \Fl_{k,P}$, define $H_i=\ker(G_k\onto G_{k-i})$ for $1\leq i\leq k-1$. Clearly, 
    \begin{equation}
        \CF=(H_1\leq \dots \leq H_{k-1}\leq G_k)
    \end{equation}
    forms an injective $k$-flag in $G_k$, and $Q((G_k,\CF))\simeq_{\Fl_{k,P}} \mathbf{G}$.
\end{proof}

\begin{remark}
    Our choice to work with $\CC_P$ rather than (the presumably more natural choice) $\Fl_{k,P}$ is out of technical convenience. It is not hard to show directly (or by \cite[Proposition~6.18]{sawin2022moment}) that $\Fl_{k,P}$ is a diamond category. However, we will use \cite[Lemma~6.23]{sawin2022moment} to quickly verify that the moment function we will find on $\CC_P$ is well-behaved (so their machinery works), and it is not immediate that this lemma is applicable to $\Fl_{k,P}$. As is remarked right after \cite[Lemma~6.23]{sawin2022moment}, analogue of this lemma need not hold for categories that fit into the framework of \cite[Proposition~6.18]{sawin2022moment} but do not fit into the framework of \cite[Lemma~6.22]{sawin2022moment}.
\end{remark}

From now on, we will interchangeably interpret an object in $\Fl_{k,P}$ (equivalently, an object in $\CC_P$) as a surjective flag $\mathbf{G}$ or a group together with an injective flag $(G,\CF)$, and write $\mathbf{G}=(G,\CF)$ if $\mathbf{G}=Q((G,\CF))$. However, we will use $\Hom_{\CC_P}$ and $\Sur_{\CC_P}$ to make clear that we are talking about morphisms (resp.~epimorphisms) in the category $\CC_P$.

\subsection{From single prime to multiple primes}
In order to draw results about random $p$-adic matrices for a single prime $p$ and apply to our situation with multiple primes $p$, we need some simple observations in \cite{sawin2022moment} about product categories. For two categories $C_1$ and $C_2$, let $C_1\times C_2$ be the \defn{product category} whose objects are ordered pairs of an object of $C_1$ and an object of $C_2$ and whose morphisms are ordered pairs of morphisms. By \cite[Lemma 6.16]{sawin2022moment}, if $C_1,C_2$ are diamond categories, then $C_1\times C_2$ is a diamond category.

\begin{lemma}\label{lem:cPcp}
    As categories, $\CC_P\simeq\prod_{p\in P} \CC_p$.
\end{lemma}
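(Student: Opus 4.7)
The plan is to construct a functor $F\colon \CC_P \to \prod_{p\in P} \CC_p$ and its inverse using the canonical $p$-primary decomposition of finite abelian $P$-groups, and verify compatibility with the flag data.

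On objects, given $(G,\CF)\in \CC_P$ with $\CF=(H_1\leq \dots \leq H_{k-1}\leq G)$, I would use the canonical decomposition $G=\bigoplus_{p\in P}G_p$ into $p$-Sylow subgroups. For each $p\in P$, set $H_{i,p}:=H_i\cap G_p$, which is the $p$-Sylow of $H_i$, and define
\begin{equation}
    \CF_p:=(H_{1,p}\leq \dots \leq H_{k-1,p}\leq G_p),
\end{equation}
an injective $k$-flag in the finite abelian $p$-group $G_p$. Then set $F(G,\CF):=((G_p,\CF_p))_{p\in P}$. On morphisms, any $\alpha\in \Hom_{\CC_P}((G,\CF),(G',\CF'))$ sends $p$-power order elements to $p$-power order elements, so it decomposes canonically as $\alpha=\bigoplus_{p\in P}\alpha_p$ with $\alpha_p\colon G_p\to G_p'$. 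The condition $\alpha(H_i)=H_i'$ intersected with $G_p'$ gives $\alpha_p(H_{i,p})=H_{i,p}'$, so $F(\alpha):=(\alpha_p)_{p\in P}$ is a morphism in $\prod_{p\in P}\CC_p$. Functoriality is immediate from that of the decomposition $G\mapsto (G_p)_{p\in P}$.

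Next, I would define the inverse functor $F^{-1}\colon \prod_{p\in P}\CC_p\to \CC_P$ by direct sum: send $((G_p,\CF^{(p)}))_{p\in P}$ to $(G,\CF)$ with $G:=\bigoplus_{p\in P}G_p$ and $\CF$ the flag whose $i$-th step is $\bigoplus_{p\in P}s_i(\CF^{(p)})\leq G$. A tuple of morphisms $(\alpha_p)_{p\in P}$ is sent to $\bigoplus_{p\in P}\alpha_p$, which clearly respects the flag data by the same computation as above.

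Finally, one checks $F$ and $F^{-1}$ are mutually inverse. On the object side this reduces to two standard facts for finite abelian $P$-groups: (i) the canonical isomorphism $G\simeq \bigoplus_{p\in P}G_p$, and (ii) any subgroup $H\leq G$ decomposes as $H=\bigoplus_{p\in P}(H\cap G_p)$. On the morphism side it reduces to the canonical decomposition $\Hom(G,G')=\prod_{p\in P}\Hom(G_p,G_p')$. There is no genuine obstacle here; the only point requiring care is the compatibility between the flag equality $\alpha(\CF)=\CF'$ and its $p$-local analogues, which follows from the fact that $\alpha_p(H_{i,p})=\alpha(H_i)\cap G_p'$ because $\alpha$ respects $p$-Sylow subgroups. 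The resulting $F$ is in fact an isomorphism of categories, which is more than required.
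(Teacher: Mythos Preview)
Your proof is correct and follows essentially the same approach as the paper: both use the canonical $p$-primary decomposition of finite abelian $P$-groups (the paper phrases it as the functor $(\cdot)_p=(\cdot)\otimes_\Z \Z_p$, which for finite $P$-groups is exactly the $p$-Sylow subgroup). The only cosmetic difference is that you construct an explicit inverse functor by direct sum and verify the two compositions are identities, whereas the paper argues equivalence via the observation that $\alpha(s_i(\CF))=s_i(\CF')$ if and only if $\alpha_p(s_i(\CF)_p)=s_i(\CF')_p$ for all $p\in P$; your version is slightly more explicit and yields an isomorphism rather than merely an equivalence, but the underlying content is the same.
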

\begin{proof}
    Recall that for any $p\in P$, we have a functor $(\cdot)_p:\FinMod_P\to \FinMod_p$ defined as $(\cdot)_p = (\cdot)\otimes_\Z \Zp$. Objects of $\CC_P$ are injective $k$-flags of finite abelian $P$-groups
    \begin{equation}
        (G,\mathcal{F})=(H_1 \leq \dots \leq H_{k-1}\leq G).
    \end{equation}
    For $p\in P$, define
    \begin{equation}
        (G,\mathcal{F})_p=((H_1)_p \leq \dots \leq (H_{k-1})_p\leq G_p)
    \end{equation} 
    by taking $p$-Sylows on each step. Recall that a morphism $\alpha:(G,\mathcal{F})\to (G',\mathcal{F}')$ in $\CC_P$ is a morphism $\alpha:G\to G'$ of finite abelian $P$-groups such that $\alpha(s_i(\mathcal F))=\alpha(\mathcal F')$. For $p\in P$, define $\alpha_p:(G,\mathcal{F})_p\to (G',\mathcal{F}')_p$ by taking the $(\cdot)_p$ functor on each step. 
    
    Define the functor $\CC_P\to \prod_{p\in P} \CC_p$ object-wise by $(G,\mathcal{F})\mapsto ((G,\mathcal{F})_p)_{p\in P}$ and morphism-wise by $\alpha\mapsto (\alpha_p)_{p\in P}$. This is an equivalence of categories because given objects $(G,\mathcal{F})$ and $(G',\mathcal{F}')$ of $\CC_P$, a morphism $\alpha:G\to G'$ satisfies $\alpha(s_i(\mathcal F))=\alpha(\mathcal F')$ if and only if $\alpha_p(s_i(\mathcal F)_p)=\alpha_p(\mathcal F'_p)$ for all $p\in P$.
\end{proof}

The formalism of moments in \cite{sawin2022moment} behaves nicely with respect to products of diamond categories.
By a \defn{moment function} on any category $C$, we mean an assignment of a real number to each isomorphism class of objects in $C$. We denote a general moment function by $M=(M_G)_{G\in C/\simeq}\in \R^{C/\simeq}$. A moment function $(M_G)_G$ is \defn{nonnegative} if $M_G\in \R_{\geq 0}$ for all $G\in C/\simeq$. For a probability measure $\nu$ on $C/\simeq$, the \defn{moment function associated with $\nu$} is the moment function $M^\nu=(M^\nu_G)_G$ with
\begin{equation}
    M^\nu_G:=\int_{H\in C/\simeq}\abs*{\mathrm{Epi}_C(H,G)}\,d\nu.
\end{equation}
The moment function associated with $\nu$ exists if the integral defining $M^\nu_G$ converges for all $G\in C/\simeq$.

\begin{lemma}\label{lem:moment-product}
    Let $C_1,C_2$ be diamond categories, and $\nu_1,\nu_2$ be probability measures on $C_1,C_2$ respectively whose moment functions exist. Consider the product measure $\nu=\nu_1\times \nu_2$ on the product category $C=C_1\times C_2$. Then the moment function associated with $\nu$ exists, and
    \begin{equation}
        M^\nu_{(G_1,G_2)}=M^{\nu_1}_{G_1} M^{\nu_2}_{G_2},\quad \text{ for }G_1\in C_1/\simeq,\, G_2\in C_2/\simeq.
    \end{equation}
\end{lemma}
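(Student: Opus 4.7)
The proof should be essentially formal, reducing to two observations. First, in a product category $C_1\times C_2$, a morphism from $(H_1,H_2)$ to $(G_1,G_2)$ is by definition a pair $(\alpha_1,\alpha_2)$ of morphisms $\alpha_i:H_i\to G_i$ in $C_i$, and such a pair is an epimorphism if and only if each $\alpha_i$ is an epimorphism in $C_i$. This factorization of epimorphisms is a standard categorical fact (one direction is immediate; the other follows by testing cancellation against pairs of the form $(\beta_1,\mathrm{id})$ and $(\mathrm{id},\beta_2)$), and it is also compatible with the diamond-category structure recorded in \cite[Lemma 6.16]{sawin2022moment}. Consequently
\begin{equation*}
\abs{\mathrm{Epi}_{C_1\times C_2}((H_1,H_2),(G_1,G_2))}=\abs{\mathrm{Epi}_{C_1}(H_1,G_1)}\cdot\abs{\mathrm{Epi}_{C_2}(H_2,G_2)}.
\end{equation*}

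Second, I would substitute this factorization into the definition of $M^\nu_{(G_1,G_2)}$ and apply Tonelli's theorem. Since the integrand is nonnegative and the $\nu_i$ are probability measures (in particular $\sigma$-finite), Tonelli applies without any integrability check and yields
\begin{align*}
M^\nu_{(G_1,G_2)} &= \int \abs{\mathrm{Epi}_{C_1}(H_1,G_1)}\cdot\abs{\mathrm{Epi}_{C_2}(H_2,G_2)} \, d(\nu_1\times\nu_2) \\
&= \left(\int \abs{\mathrm{Epi}_{C_1}(H_1,G_1)}\, d\nu_1\right)\left(\int \abs{\mathrm{Epi}_{C_2}(H_2,G_2)}\, d\nu_2\right) \\
&= M^{\nu_1}_{G_1}\cdot M^{\nu_2}_{G_2}.
\end{align*}
Both factors on the right are finite by the hypothesis that $M^{\nu_1}$ and $M^{\nu_2}$ exist, so the double integral is finite, which simultaneously verifies the existence of $M^\nu$ and gives its value.

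There is no genuine obstacle here; the only point needing mild attention is the factorization of $\mathrm{Epi}$ in the product category, after which the conclusion is a one-line application of Tonelli. In our intended application, $C_i = \CC_{p_i}$ for the primes $p_i \in P$, and this lemma combined with \Cref{lem:cPcp} will let us reduce moment computations on $\CC_P$ to moment computations on each $\CC_p$ separately.
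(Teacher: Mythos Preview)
Your proof is correct and essentially identical to the paper's: both note the standard factorization $\mathrm{Epi}_{C_1\times C_2}((H_1,H_2),(G_1,G_2))=\mathrm{Epi}_{C_1}(H_1,G_1)\times\mathrm{Epi}_{C_2}(H_2,G_2)$ and then apply Fubini/Tonelli to split the integral. The only difference is cosmetic---you invoke Tonelli (slightly more precise for nonnegative integrands) where the paper says Fubini, and you add a sentence on the intended application.
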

\begin{proof}
    Recall the standard fact that a morphism $(f_1,f_2)$ in $C_1\times C_2$ is an epimorphism if and only if both $f_1,f_2$ are epimorphisms. Then the result follows from Fubini's theorem:
    \begin{equation}
        \begin{aligned}
            M^\nu_{(G_1,G_2)} &=\int_{(H_1,H_2)\in C_1\times C_2/\simeq}\abs*{\mathrm{Epi}_{C_1\times C_2}((H_1,H_2),(G_1,G_2))}\,d\nu_1d\nu_2\\&=\int_{(H_1,H_2)\in C_1\times C_2/\simeq}\abs*{\mathrm{Epi}_{C_1}(H_1,G_1)\times \mathrm{Epi}_{C_2}(H_2,G_2)}\,d\nu_1d\nu_2\\
            &=\int_{H_1\in C_1/\simeq}\abs*{\mathrm{Epi}_{C_1}(H_1,G_1)}\,d\nu_1 \int_{H_2\in C_2/\simeq}\abs*{\mathrm{Epi}_{C_2}(H_2,G_2)}\,d\nu_2=M^{\nu_1}_{G_1} M^{\nu_2}_{G_2},
        \end{aligned}
    \end{equation}
    where each equality is in the sense that if the right-hand side converges then so does the left-hand side.
\end{proof}

For a diamond category $C$ and a moment function $M$ on $C$, we have the notion of \defn{well-behavedness} defined in \cite[p.~4]{sawin2022moment}, a condition saying $M$ does not grow too fast that is sufficient to guarantee that $M$ is the moment function associated with a unique (signed) measure, among other niceness properties. For the reader's benefit let us note that it will not matter what the actual definition is if one accepts the results in \cite{sawin2022moment}, as we appeal to results of \cite{sawin2022moment} to verify it and then use it as input to other results of \cite{sawin2022moment}.

\begin{lemma}[{\cite[Lemma 6.17]{sawin2022moment}}]
    Suppose a nonnegative moment function $M$ on a product of diamond categories $C_1\times C_2$ is \defn{separable}, i.e., there are nonnegative moment functions $M^1$ on $C_1$ and $M^2$ on $C_2$ such that $M_{(G_1,G_2)}=M^1_{G_1}M^2_{G_2}$ for all $(G_1,G_2)\in C_1\times C_2/\simeq$. If $M^1$ and $M^2$ are both well-behaved, then so is $M^1\times M^2$. \label{lem:well-product}
\end{lemma}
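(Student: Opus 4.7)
The plan is to unpack the well-behavedness condition from \cite{sawin2022moment} and verify that each quantitative bound it imposes factors across the product. In Sawin-Wood, well-behavedness for a moment function on a diamond category is a growth/absolute-convergence condition ensuring (a) uniqueness of the associated signed measure via Möbius inversion and (b) validity of their large-category limiting theorems. The condition typically takes the form of summability or boundedness of expressions built out of $M_G$, $|\Aut(G)|$, numbers of epimorphisms/subobjects, and related categorical data.

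The strategy is to exploit the fact that everything in the product category $C_1 \times C_2$ factors multiplicatively. Specifically: objects are pairs; morphisms are pairs of morphisms; epimorphisms are exactly pairs of epimorphisms (as used in the proof of \Cref{lem:moment-product}); automorphism groups satisfy $\Aut((G_1,G_2)) \simeq \Aut(G_1)\times \Aut(G_2)$; and subobjects of $(G_1,G_2)$ correspond to pairs of subobjects. Since separability gives $M_{(G_1,G_2)} = M^1_{G_1}\,M^2_{G_2}$, any sum or supremum appearing in the definition of well-behavedness for $M$ will decompose (by Fubini or by factoring products of sums) into the product of the corresponding sum/supremum for $M^1$ and for $M^2$. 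Applying the well-behavedness of $M^1$ and $M^2$ to each factor yields the desired bound for $M$.

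Concretely, I would proceed in two steps. First, fix the precise form of the well-behavedness inequalities from \cite{sawin2022moment}: identify which sums are required to converge, and what uniform bounds are imposed over $G$ in the relevant category. Second, for each such inequality, rewrite the sum over $(G_1,G_2) \in (C_1\times C_2)/\!\simeq$ as an iterated sum over $G_1 \in C_1/\!\simeq$ and $G_2 \in C_2/\!\simeq$, pull out the multiplicative factorization of the integrand using separability and the product structure of the category, and conclude that the resulting product of finite quantities is finite. For bounds of supremum type, the analogous step uses that $\sup_{G_1,G_2}(a_{G_1}b_{G_2}) = (\sup_{G_1} a_{G_1})(\sup_{G_2} b_{G_2})$ for nonnegative $a,b$.

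The main obstacle, and the reason the lemma is nontrivial enough to be stated separately in \cite{sawin2022moment}, is bookkeeping: the Sawin-Wood well-behavedness condition is not a single inequality but a package of quantitative estimates involving level structures (e.g., ranks of $p$-torsion, cardinalities, or level-filtered subobjects). One must check that each such level-stratified quantity in $C_1\times C_2$ really does factor as a pair of analogous quantities in the factor categories, rather than mixing across them. Once this structural compatibility is established, the rest is a straightforward Fubini-type multiplication of estimates.
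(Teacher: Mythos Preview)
The paper does not prove this lemma at all: it is quoted verbatim as \cite[Lemma~6.17]{sawin2022moment} and used as a black box. Indeed, the paper explicitly remarks just above this lemma that ``it will not matter what the actual definition [of well-behavedness] is if one accepts the results in \cite{sawin2022moment}, as we appeal to results of \cite{sawin2022moment} to verify it and then use it as input to other results of \cite{sawin2022moment}.'' So there is no paper proof to compare your attempt against.

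Your outline is a plausible sketch of how the Sawin--Wood argument presumably proceeds---factoring the well-behavedness inequalities across the product using the multiplicativity of $\Aut$, epimorphisms, and the moment function---but it is not a proof, only a plan. You correctly identify that the substance lies in checking that each level-stratified quantity in the Sawin--Wood definition factors; however, you never state that definition, so none of the claimed factorizations are actually verified. If the goal is to reproduce the paper's treatment, the correct move is simply to cite \cite[Lemma~6.17]{sawin2022moment}. If the goal is to give an independent proof, you would need to write down the precise well-behavedness condition from \cite{sawin2022moment} and carry out the bookkeeping you describe.
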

The final ingredient we need in order to apply the formalism of \cite{sawin2022moment} is a statement that our desired limiting moment is well-behaved.

\begin{proposition}\label{prop:well-behaved}
    For any prime $p$, consider the moment function $\boldsymbol{1}$ on $\CC_p$ that assigns $1$ to all $\mathbf{G}\in \CC_p$. Then $\boldsymbol{1}$ is well-behaved.
\end{proposition}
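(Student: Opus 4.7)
The plan is to apply \cite[Lemma~6.23]{sawin2022moment} directly. As the remark following \Cref{lem:replace} indicates, the entire reason for defining $\CC_p = (\FinMod_p,\calFl_k)$ as in \Cref{def:C}, rather than working with the (more natural-looking) category $\Fl_{k,p}$, was precisely to place ourselves in the framework where this lemma is applicable. Consequently, most of the work for this proposition is bookkeeping: verifying that the hypotheses of \cite[Lemma~6.23]{sawin2022moment} hold for the constant moment function $\boldsymbol{1}$.

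Schematically, that lemma reduces well-behavedness of a moment function on $(\FinMod_p,\calFl_k)$ to two ingredients. The first is well-behavedness of an associated moment function on the base category $\FinMod_p$ itself. Since our input is the constant moment function $\boldsymbol{1}$ on $\CC_p$, the associated moment function on $\FinMod_p$ reduces (up to bookkeeping involving the action of $\Aut(G)$ on $\calFl_k(G)$) to a function that is dominated by the constant moment function on $\FinMod_p$; and well-behavedness of the latter is classical, corresponding to the Cohen--Lenstra distribution and underlying the moment-method universality statements of \cite{W1,sawin2022moment}.

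The second ingredient is a subexponential-type growth bound on the fiber sizes $|\calFl_k(G)|$. This is elementary: an injective $k$-flag in $G$ is a chain of $k-1$ subgroups, so $|\calFl_k(G)|\leq N(G)^{k-1}$, where $N(G)$ is the total number of subgroups of $G$. For an abelian $p$-group of order $p^n$, a very loose bound gives $N(G)\leq p^{O(n^2)}$ (e.g., by embedding $G$ into some $(\Z/p^n\Z)^n$ and counting subgroups of the latter), so $|\calFl_k(G)|\leq p^{O(kn^2)}$, which is subexponential in $|G|=p^n$ and well within the tolerance required by \cite[Lemma~6.23]{sawin2022moment}.

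The only real obstacle is matching our situation carefully to the exact hypotheses of \cite[Lemma~6.23]{sawin2022moment}---in particular, tracking the precise form of the induced moment function on $\FinMod_p$ and confirming the functorial compatibility of $\calFl_k$ under epimorphisms (pushforward of flags along a surjection is again a flag, and hitting every target flag is immediate). Since the authors explicitly designed $\CC_p$ to fit this lemma, I expect the actual proof to be quite short and essentially to reduce to citing \cite[Lemma~6.23]{sawin2022moment} together with the subexponential bound on $|\calFl_k(G)|$ above.
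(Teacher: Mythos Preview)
There is a genuine gap. You are right that \cite[Lemma~6.23]{sawin2022moment} is the correct reduction tool, but you have misread what it outputs. It does not produce two separate ingredients; it says precisely that $\boldsymbol{1}$ is well-behaved on $\CC_p=(\FinMod_p,\calFl_k)$ if and only if the single moment function
\[
G\ \longmapsto\ \sum_{\CF\in\calFl_k(G)}\boldsymbol{1}_{(G,\CF)}\ =\ |\calFl_k(G)|\ =\ n_k(G)
\]
is well-behaved on $\FinMod_p$ (this is exactly \Cref{lem:moment-passing}). There is no quotient by $\Aut(G)$ in this reduction, and the induced moment function is not ``dominated by the constant moment function'': it is $n_k(G)$ itself, which grows.

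The harder misconception is the criterion for well-behavedness on $\FinMod_p$. ``Subexponential in $|G|$'' is not the threshold. Roughly, the borderline is $|\wedge^2 G|\approx p^{\frac{1}{2}\sum_i \lambda_i'^2}$ (see \cite[Lemma~6.9]{sawin2022moment} and the remark thereafter: $|G|^r|\wedge^2 G|$ fails to be well-behaved once $r\geq 1$). Your bound $n_k(G)\leq N(G)^{k-1}\leq p^{O(kn^2)}$, with $n=\log_p|G|$, has the wrong constant in front of the quadratic and is far above this threshold for $k\geq 2$. The paper instead invokes the sharper estimate \cite[Lemma~8.5]{nguyen2022universality}, namely $n_k(G_\lambda)\leq p^{f_k\lambda_1+\frac{1-c_k}{2}\sum_i\lambda_i'^2}$ with $c_k>0$, so the quadratic coefficient is strictly below $\tfrac12$. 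Even then neither \cite[Corollary~6.5]{sawin2022moment} nor \cite[Lemma~6.9]{sawin2022moment} applies directly (the linear term $f_k\lambda_1$ obstructs the latter), and the paper proves a mild strengthening (\Cref{lem:levelwise} and \Cref{lem:quadratic-saving}) to finish. So the step you called ``elementary bookkeeping'' is in fact where all the content lies.
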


We will prove this later in \Cref{sec:verify_well_behaved}.

\begin{corollary}\label{cor:well-behaved-multiprime}
    For a finite set of primes $P$, consider the moment function $\boldsymbol{1}$ on $\CC_P$ that assigns $1$ to all $\mathbf{G}\in \CC_P$. Then $\boldsymbol{1}$ is well-behaved.
\end{corollary}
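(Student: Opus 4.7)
The plan is to deduce the multi-prime corollary from the single-prime case via the product decomposition. By \Cref{lem:cPcp}, there is an equivalence of categories $\CC_P \simeq \prod_{p\in P} \CC_p$, so the constant moment function $\boldsymbol{1}$ on $\CC_P$ corresponds under this equivalence to the moment function on $\prod_{p\in P}\CC_p$ sending any tuple $((\mathbf{G}_p))_{p\in P}$ to $1$. This moment function is visibly separable in the sense of \Cref{lem:well-product}, since it factors as $\prod_{p\in P} \boldsymbol{1}_{\CC_p}((\mathbf{G}_p))$.

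Next, I would induct on $\abs{P}$. The base case $\abs{P}=1$ is exactly \Cref{prop:well-behaved}. For the inductive step, write $P = P' \sqcup \set{p}$ for some $p \in P$, so that
\begin{equation}
    \prod_{q\in P}\CC_q \simeq \Bigl(\prod_{q\in P'}\CC_q\Bigr) \times \CC_p.
\end{equation}
By the inductive hypothesis (combined with a second application of \Cref{lem:cPcp} to identify $\prod_{q\in P'}\CC_q$ with $\CC_{P'}$), the constant moment function on $\prod_{q\in P'}\CC_q$ is well-behaved, and by \Cref{prop:well-behaved} so is $\boldsymbol{1}$ on $\CC_p$. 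The constant moment function on $\prod_{q\in P}\CC_q$ is the separable product of these two, so \Cref{lem:well-product} yields that it is well-behaved.

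The main obstacle, of course, is really \Cref{prop:well-behaved} itself, whose proof is deferred to \Cref{sec:verify_well_behaved}; once granted, the present corollary is a routine bookkeeping step using the product-category formalism of \cite{sawin2022moment}. Strictly speaking, one should also note that well-behavedness is preserved under equivalence of diamond categories, which is immediate from the definition since both the Hom sets and the epimorphism classes are identified by an equivalence, so transferring $\boldsymbol{1}$ on $\CC_P$ to its counterpart on $\prod_{p\in P}\CC_p$ is harmless.
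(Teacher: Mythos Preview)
Your proof is correct and follows essentially the same approach as the paper's: use \Cref{lem:cPcp} to identify $\CC_P$ with $\prod_{p\in P}\CC_p$, observe that $\boldsymbol{1}$ is separable, and combine \Cref{prop:well-behaved} with \Cref{lem:well-product}. The paper's proof is two sentences and applies \Cref{lem:well-product} to the full product at once, leaving the (trivial) induction over $\abs{P}$ implicit; you spell out that induction explicitly since \Cref{lem:well-product} is stated for two factors, which is a harmless elaboration rather than a different route.
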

\begin{proof}
    Viewing $\CC_P$ as $\prod_{p\in P}\CC_p$ according to \Cref{lem:cPcp}, the moment function $\boldsymbol{1}_{\CC_P}$ is separable and is the product of the moment functions $\boldsymbol{1}_{\CC_p}$ for all $p\in P$. The result follows from \Cref{lem:well-product} and \Cref{prop:well-behaved}.
\end{proof}

\section{{Proof of \Cref{thm:flag} conditional on moment computations}}\label{sec:proof_and_state_moments}

The following theorem, which we will prove in \Cref{sec:description_of_moments,sec:moment_computation}, gives the desired limiting moments.

\begin{theorem}\label{thm:moment}
    Fix a surjective $k$-flag $\mathbf{G}\in \Fl_{k,P}$, $P$ a finite set of primes, and $\alpha \in (0,1/2]$. For each $n\in \Z_{\geq 1}$, let $M_1,\dots,M_k\in \Mat_n(\Z)$ be independent $\alpha,P$-balanced random matrices. Then 
    \begin{equation}
        \lim_{n\to \infty} \E_{M_1,\dots,M_k\in \Mat_n(\Z)}[\abs{\Sur_{\CC_P}(\bfcok(M_1,\dots,M_k)_P,\mathbf{G})}] =1.
    \end{equation}
\end{theorem}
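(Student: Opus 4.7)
The proof adapts the strategy of \cite[Theorem~1.3]{nguyen2022universality} to the flag setting. First, I would give a combinatorial expression for the moment. Writing $\mathbf{G}=(G,\CF)$ with $\CF=(H_1\leq\cdots\leq H_{k-1})$ and setting $H_0:=0$, $H_k:=G$, each element of $\Sur_{\CC_P}(\bfcok(M_1,\dots,M_k)_P,\mathbf{G})$ lifts uniquely to a homomorphism $F:\Z^n\to G$ with $F(M_1\cdots M_j\Z^n)=H_{k-j}$ for all $0\leq j\leq k$ (the $j=0$ condition forces $F$ to be surjective, the $j=k$ condition forces $F$ to factor through $\cok(M_1\cdots M_k)_P$). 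Therefore
\[
\E\bracks*{\abs{\Sur_{\CC_P}(\bfcok(M_1,\dots,M_k)_P,\mathbf{G})}}=\sum_{F:\Z^n\to G}\P\bracks*{\im(FM_1\cdots M_j)=H_{k-j}\text{ for all }0\leq j\leq k}.
\]
Since $G$ is a finite abelian $P$-group, one may reduce (as planned in \Cref{sec:description_of_moments}) to the case where all entries of the $M_j$ lie in a suitable finite ring $\Z/a\Z$ and $F$ ranges over homomorphisms $(\Z/a\Z)^n\to G$.

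Next, I would exploit the independence of $M_1,\dots,M_k$ and of their columns. Setting $\phi_j:=FM_1\cdots M_j$, so that $\phi_j=\phi_{j-1}M_j$, condition iteratively on $M_1,\dots,M_{j-1}$: the previous conditions force $\phi_{j-1}:\Z^n\onto H_{k-j+1}$, and the columns of $\phi_j$ are then the iid pushforwards $\phi_{j-1}(v_i)$ of the $\alpha,P$-balanced columns $v_i$ of $M_j$. The pointwise Wood-style estimate from \cite{W1}, refined in \cite{nguyen2022universality}, shows that for $\phi_{j-1}$ outside a negligible set of ``degenerate'' surjections, $\P_v[\phi_{j-1}(v)=h]$ is close to $\abs{H_{k-j+1}}^{-1}$ uniformly in $h\in H_{k-j+1}$. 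Hence the columns of $\phi_j$ behave asymptotically as iid uniform elements of $H_{k-j+1}$, and the probability they all lie in $H_{k-j}$ and generate it is $(\abs{H_{k-j}}/\abs{H_{k-j+1}})^n(1+o(1))$. Telescoping across $j=1,\dots,k$ gives $\P[\text{all conditions hold}]\approx\abs{G}^{-n}$, and summing over surjections $F:\Z^n\onto G$ yields $\abs{\Sur(\Z^n,G)}/\abs{G}^n\to 1$ as $n\to\infty$ (since a random $n$-tuple in $G$ generates $G$ asymptotically almost surely). The limit value of $1$ is cross-checked against the explicit Haar computation of \cite{huang2024cokernel}.

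The principal technical obstacle is controlling the contribution from the ``degenerate'' intermediate surjections $\phi_{j-1}$ at each of the $k$ layers, where the Wood pointwise approximation fails. I would handle these via anti-concentration: partition surjections by the concentration of their kernels on $\alpha,P$-balanced vectors, bound the measure of the bad part exponentially in $n$ using the $\alpha,P$-balanced hypothesis, and verify that this smallness survives the $k$-fold iteration so that the total multiplicative error remains $1+o(1)$. This iterative degeneracy control is the main content of the planned \Cref{sec:moment_computation}, supported by basic estimates in \Cref{sec:supporting_lemmas}, with the framework-level wrap-up provided by the well-behavedness check in \Cref{sec:verify_well_behaved}.
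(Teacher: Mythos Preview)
Your proposal is correct and takes essentially the same approach as the paper: the combinatorial description of $\Sur_{\CC_P}$ as maps $F$ with prescribed images $\im(FM_1\cdots M_j)=H_{k-j}$, the reduction to a finite ring $\Z/a\Z$, and the iterative code/non-code (``degenerate'') split controlled by Wood's depth estimates are exactly what is carried out in Sections~4--6 (your telescoping heuristic is made precise as the inductive \Cref{prop:single:k}). One minor note: the well-behavedness check in \Cref{sec:verify_well_behaved} is an input to \Cref{thm:flag}, not to \Cref{thm:moment} itself.
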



To motivate the results of \Cref{prop:well-behaved} and \Cref{thm:moment} which we have just introduced, we show how they imply our main Theorem \ref{thm:flag}.

\begin{proof}[Proof of \Cref{thm:flag}]
    Viewing $\bfcok(M_1,\ldots,M_k)_P$ and $\mathbf{G}$ as elements of $\mathcal{C}_P$ by \Cref{lem:replace}, we have by the same lemma that the event $\bfcok(M_1,\ldots,M_k)_P \simeq_{\Fl_{k,P}} \mathbf{G}$ in \Cref{thm:flag} is equivalent to $\bfcok(M_1,\ldots,M_k)_P \simeq_{\mathcal{C}_P} \mathbf{G}$. So we work with $\mathcal{C}_P$ from now on.
    
    Now consider arbitrary random matrices $M_1,\ldots,M_k$ satisfying the assumptions of \Cref{thm:flag}. By \Cref{prop:well-behaved}, the moment function $\boldsymbol{1}_P:=\boldsymbol{1}_{\CC_P}$ on $\mathcal{C}_P$ is well-behaved. By \Cref{thm:moment}, the moment of $\bfcok(M_1,\ldots,M_k)_P$ converges to $\boldsymbol{1}_P$. Hence \cite[Theorem 1.8]{sawin2022moment} implies that the following are true:
    \begin{enumerate}
    \item There is a unique probability measure $\nu_P$ on $\mathcal{C}_P$ with moment $\boldsymbol{1}_P$.
  \vskip .05in
  \item $\bfcok(M_1,\ldots,M_k)_P$ converges in distribution to $\nu_P$ as $n \to \infty$. 
    \end{enumerate}
    Hence, it suffices to show that the measure $\nu_P$ associated with moment $\boldsymbol{1}_P$ is given by
    \begin{equation}\label{eq:limiting-dist}
        \nu_P(\mathbf{G})=\frac{1}{\abs{\Aut_{\Fl_{k,P}}(\mathbf{G})}} \parens*{\prod_{p\in P}\prod_{i=1}^\infty (1-p^{-i})}^k.
    \end{equation}

    While computing the distribution given the moment is a difficult task in general, we are able to avoid this task by using the result of \cite{huang2024cokernel} on Haar random matrices over $\Zp$, as we will do for the rest of the proof. The discussion above applied to the special case $P=\set{p}$ gives rise to a unique probability measure $\nu_p$ on $\CC_p$ with moment $\boldsymbol{1}_p$. Since $\boldsymbol{1}_P=\prod_{p\in P}\boldsymbol{1}_p$, by \Cref{lem:moment-product} the moment associated with the product measure $\prod_{p\in P}\nu_p$ on $\CC_P/\simeq$ is $\boldsymbol{1}_P$. By the uniqueness statement defining $\nu_P$, we conclude $\nu_P=\prod_{p\in P}\nu_p$, or in other words
    \begin{equation}
        \nu_P(\mathbf{G})= \prod_{p\in P} \nu_p(\mathbf{G}_p)\text{ for }\mathbf{G}\in \Fl_{k,P}.
    \end{equation}
    Since $\Aut_{\Fl_{k,P}}(\mathbf{G})=\prod_{p\in P} \Aut_{\Fl_{k,p}}(\mathbf{G}_p)$, to prove \eqref{eq:limiting-dist} it suffices to prove the special case $P=\set{p}$, which we do next.

    Let $\mathbf{G}=(G_k\onto\dots\onto G_1)\in \Fl_{k,p}$ be fixed. We note from the uniqueness part of statement (1) applied to the case $P=\set{p}$ that the measure $\nu_p$ does not depend on the random matrices $M_1,\ldots,M_k$. In particular, let $N$ be an integer such that $p^{N-1}G_k=0$, and let $M_1^{(N)},\ldots,M_k^{(N)}$ be iid matrices with iid entries distributed by the uniform measure on $\set{0,1,\dots,p^N-1}$, then statement (2) applied to this special case yields
    \begin{equation}\label{eq:proof-of-main}
        \lim_{n \to \infty} \Prob(\bfcok(M_1^{(N)},\ldots,M_k^{(N)})_p \simeq \mathbf{G}) = \nu_p(\mathbf{G}). 
    \end{equation}
    We claim that the left-hand side is unchanged if we replace $M_i^{(N)}$ by iid matrices $M_i'$ with iid entries distributed by the Haar measure on $\Zp$. Indeed, since $p^{N-1}G_k=0$, the two events below for a random flag $\mathbf{G'}$ of abelian $p$-groups are the same:
    \begin{equation}
        \set*{\mathbf{G'}\otimes \Z/p^N\Z \simeq \mathbf{G}} = \set*{\mathbf{G'} \simeq \mathbf{G}}.
    \end{equation}
    Since $M_i^{(N)}$ and $M_i'$ are identically distributed modulo $p^N$, and taking cokernels commute with taking tensor products, it follows that 
    \begin{equation}
        \Prob(\bfcok(M_1^{(N)},\ldots,M_k^{(N)})_p \simeq \mathbf{G}) = \Prob(\bfcok(M_1',\ldots,M_k')_p \simeq \mathbf{G}).
    \end{equation}
    On the other hand, \cite[Theorem 1.1]{huang2024cokernel} states that 
    \begin{equation}
    \lim_{n \to \infty} \Prob(\cok(M_1',\ldots,M_k') \simeq \mathbf{G}) = \frac{1}{\abs{\Aut_{\mathbf{Fl}_{k,p}}(\mathbf{G})}} \left(\prod_{i=1}^\infty (1-p^{-i})\right)^k.
    \end{equation}
    Combining the equalities, we get
    \begin{equation}
        \nu_p(\mathbf{G})=\frac{1}{\abs{\Aut_{\mathbf{Fl}_{k,p}}(\mathbf{G})}} \left(\prod_{i=1}^\infty (1-p^{-i})\right)^k,
    \end{equation}
    and the proof is complete.
\end{proof}

\section{Reducing \Cref{thm:moment} to spans over a finite ring}\label{sec:description_of_moments} 
In this section, we convert the moment computation problem in Theorem \ref{thm:moment} into a more explicit form, \Cref{thm:chain:span}, which is what we will prove subsequently.

\begin{definition}\label{def:R_alpha_balanced}
Let $a \in \Z_{\geq 1}$ and $R=\Z/a\Z$. Given real $\alpha \in (0,1/2]$, we say an $R$-valued random variable $\xi$ is \emph{$\al$-balanced} if for every prime $p|a$ we have
\begin{equation}\label{eqn:alpha_R}
\max_{r \in \Z/p\BBZ} \P(\xi\equiv r \pmod{p}) \le 1-\al.
\end{equation}
We say that a random matrix with $R$-valued entries is $\alpha$-balanced for some $\alpha \in (0,1/2]$ if the entries are independent and $\alpha$-balanced (but not necessarily iid).
\end{definition}

Note that for $P = \{p \text{ prime}: p\mid a\}$, trivially a $\Z$-valued random variable $\xi$ is $(\alpha,P)$-balanced if and only if the $R$-valued random variable $\xi \pmod{a}$ is $\alpha$-balanced.

\begin{proposition}\label{thm:chain:span}
Let $a \in \Z_{\geq 1}$, $R=\Z/a\Z$, $\alpha \in (0,1/2]$. For each $n$, let $M_1,\dots,M_k$ be independent and $\alpha$-balanced random matrices in $\Mat_n(R)$. Let $G$ be an $R$-module (i.e., finite $a$-torsion abelian group), and $0=H_0\leq H_1\leq \dots \leq H_{k-1}\leq H_k=G$ be an injective flag of subgroups. Then
    \begin{equation}\label{eq:moment_F_probs}
        \lim_{n\to\infty}\sum_{F\in G^n} \Prob\Big(\sp(M_i\cdots M_k F)=H_{i-1} \text{ for all $1\leq i\leq k+1$}\Big)=1.
    \end{equation}
    Here, we sum over all column vectors $F$ with entries in $G$, and the span of a column vector in $G^n$ means the subgroup of $G$ generated by the entries of the vector. 
\end{proposition}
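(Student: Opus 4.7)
The plan is to prove \Cref{thm:chain:span} by induction on $k$, reducing at each step to a single-matrix moment claim of Littlewood--Offord (LO) type. For the base case $k=1$, the flag is $0 \leq G$ and the statement reduces to showing $\sum_{F \in G^n:\,\sp(F) = G} \Prob(M_1 F = 0) \to 1$. By independence of the rows of $M_1$, $\Prob(M_1 F = 0) = \prod_{j=1}^n \Prob\!\big(\sum_l (M_1)_{jl} F_l = 0\big)$, and LO-type anticoncentration bounds for $\alpha$-balanced variables (to be supplied by \Cref{sec:supporting_lemmas}) should give each factor $\approx |G|^{-1}$ for ``well-spread'' $F$, while the ``structured'' exceptional $F$ contribute negligibly to the total. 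Since $|\{F : \sp(F) = G\}| = |G|^n(1+o(1))$, the sum is $1 + o(1)$, paralleling the single-matrix moment universality of \cite{W1}.

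For the inductive step $k-1 \Rightarrow k$, I would use independence of $M_k$ from $M_1, \ldots, M_{k-1}$ and condition on the intermediate vector $F' := M_k F$ to obtain
\[
\sum_{F :\, \sp(F) = G} \Prob(\text{all conditions}) \;=\; \sum_{F' \in H_{k-1}^n :\, \sp(F') = H_{k-1}} S_k(F')\cdot P_{k-1}(F'),
\]
where $S_k(F') := \sum_{F \in G^n:\, \sp(F) = G} \Prob(M_k F = F')$ and $P_{k-1}(F') := \Prob(\sp(M_i \cdots M_{k-1} F') = H_{i-1} \text{ for all } 1 \leq i \leq k-1)$ is the probability of the sub-conditions on $F'$ using $M_1, \ldots, M_{k-1}$ and the sub-flag $0 \leq H_1 \leq \cdots \leq H_{k-1}$ within $H_{k-1}$. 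The inductive hypothesis gives $\sum_{F'} P_{k-1}(F') \to 1$, so it suffices to show $S_k(F') \to 1$ uniformly enough to combine with this; for instance, an $L^\infty$-type estimate $\sup_{F'} |S_k(F') - 1| \to 0$ would let us conclude via $|\sum_{F'} S_k(F') P_{k-1}(F') - \sum_{F'} P_{k-1}(F')| \leq \sup_{F'}|S_k(F') - 1| \cdot \sum_{F'} P_{k-1}(F') \to 0$.

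The hard part will be this single-matrix claim with an arbitrary target: for any submodule $H \leq G$ and any $F' \in H^n$ with $\sp(F') = H$, show $\sum_{F \in G^n:\, \sp(F) = G} \Prob(M F = F') = 1 + o(1)$, with sufficient uniformity in $F'$. I would attack it by Fourier inversion on $G^n$: expanding $\mathbf{1}[MF = F'] = |G|^{-n} \sum_{\chi \in \widehat{G}^n} \chi(MF - F')$ and averaging over $M$, the trivial character contributes $|G|^{-n} \cdot |\{F : \sp(F) = G\}| \to 1$, while each nontrivial $\chi = (\chi_1, \ldots, \chi_n)$ yields a product $\prod_j \E[\chi_j(\sum_l \xi_{jl} F_l)]$ with each factor of modulus at most $1 - c(\alpha) < 1$ (by $\alpha$-balancedness) whenever $F$ is suitably spread relative to $\ker \chi_j$. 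The delicate point is the usual spread-versus-structured dichotomy for $F$, now complicated by the nonzero target $F' \in H^n$ for $H$ a proper submodule of $G$: structured $F$ must be shown to contribute negligibly even though they can cluster in subgroups whose fibers over $F'$ may be atypically large. The LO-type bounds from \Cref{sec:supporting_lemmas} will need to be applied uniformly in $F'$, and this is where the bulk of the subsequent moment computation in \Cref{sec:moment_computation} should be concentrated.
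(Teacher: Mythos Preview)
Your approach is correct and is a genuinely different (in fact, cleaner) decomposition than the paper's. The paper does not induct directly on the statement; instead it proves a strengthened inductive claim (Proposition~\ref{prop:single:k}) that, for \emph{fixed} $F_k$, estimates the entire inner sum $\sum_{F_{k-1},\dots,F_1}\Prob(M_kF_k=F_{k-1},\dots,M_1F_1=0)$ as $\approx |G|^{-n}$ when $F_k$ is a code and as $\leq Ke^{-\alpha n/2}D_k^n/|G|^n$ when $F_k$ has depth $D_k\geq 2$, tracking the code/non-code status of every intermediate $F_i$ inside the induction. Only at the very end is the outer sum over $F_k$ taken. Your route instead peels off the outermost layer: factor as $\sum_{F'}S_k(F')P_{k-1}(F')$, prove $S_k(F')=1+o(1)$ uniformly in $F'$ by one application of the code/non-code dichotomy on $F$, and invoke the induction hypothesis as a black box. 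This works, and the uniform single-matrix estimate you need follows directly from Lemmas~\ref{lemma:code:single:matrix:1}, \ref{lemma:non-code:count:1}, \ref{lemma:non-code:1:matrix} (whose bounds are already uniform in the target vector $A$), so the ``complication'' from the nonzero target $F'$ that you anticipate is not actually present. What the paper's route buys is the finer pointwise information in Proposition~\ref{prop:single:k}(ii), which is what underlies Remarks~\ref{rmk:largek} and~\ref{rmk:smallalpha} on growing $k$ and shrinking $\alpha$; for the bare statement of \Cref{thm:chain:span} with fixed $k,\alpha$, your argument is shorter.
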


\begin{proof}[Proof that \Cref{thm:chain:span} implies \Cref{thm:moment}]
    Letting $\tM_i$ be the matrices denoted $M_i$ in \Cref{thm:moment} (note that \Cref{thm:chain:span} uses $M_i$ differently, and in this proof we will reserve $M_i$ for the latter), we must show
    \begin{equation}\label{eq:chainspan_wts}
        \lim_{n\to \infty} \E_{\tM_1,\dots,\tM_k}[\abs{\Sur_{\CC_P}(\bfcok(\tM_1,\dots,\tM_k)_P,\mathbf{G})}] =1.
    \end{equation}
    Let $G$ be the largest group in the surjective $k$-flag $\bf{G}$ of \Cref{thm:moment}, and let $a \in \Z_{\geq 1}$ be an annihilator of $G$, i.e. $a g = 0$ for every $g \in G$. Let $R=\Z/a\Z$ and let $M_i := \tM_i \pmod{a}$, a random element of $\Mat_n(R)$. Since $\tM_1,\ldots,\tM_k$ were $(\alpha,P)$-balanced, $M_1,\ldots,M_k$ are $\alpha$-balanced. It is clear that 
    \begin{equation}\label{eq:tM_to_M}
        \abs{\Sur_{\CC_P}(\bfcok(\tM_1,\dots,\tM_k)_P,\mathbf{G})} = \abs{\Sur_{\CC_P}(\bfcok(M_1,\dots,M_k),\mathbf{G})},
    \end{equation}
    where here and below we use the same notation $\cok$ and $\bfcok$ for matrices over $R$ without comment. Hence to show \eqref{eq:chainspan_wts}, we need only work with the matrices $M_1,\ldots,M_k$.

Let $\CF_{M_1,\dots,M_k}\in \calFl_k(\cok(M_1\cdots M_k))$ be the injective flag such that $\bfcok(M_1,\dots,M_k)$ can be identified via Lemma \ref{lem:replace} with the object $(\cok(M_1\cdots M_k),\CF_{M_1,\dots,M_k})$ in $\CC_P$; we say $\CF_{M_1,\dots,M_k}$ is the injective flag associated with $\bfcok(M_1,\dots,M_k)$. Using the proof of Lemma \ref{lem:replace}, the steps of the flag $\CF_{M_1,\dots,M_k}$ are given by
\begin{equation}
    s_i(\CF_{M_1,\ldots,M_k})=\ker\parens*{\frac{R^n}{\im(M_1\cdots M_k)}\to \frac{R^n}{\im(M_1\cdots M_{k-i})}}=\frac{\im(M_1\cdots M_{k-i})}{\im(M_1\cdots M_k)}.
\end{equation}
Now consider a fixed object $\mathbb{G}=(G,\CF)$ of $\CC_P$, where $\CF$ is the flag $H_1\leq \dots \leq H_{k-1}\leq G$. By Lemma \ref{lem:epi} and the definition of $\CC_P$, an epimorphism $\alpha\in \Sur_{\CC_P}(\bfcok(M_1,\dots, M_k), \mathbb{G})$ (note importantly that we are in $\CC_P$, not $\Fl_{k,p}$) is a surjection $F:\cok(M_1\cdots M_k)\to G$ such that
\begin{equation}
    F\parens*{\frac{\im(M_1\cdots M_{k-i})}{\im(M_1\cdots M_k)}} = H_i
\end{equation}
for $1\leq i\leq k-1$. 

Since a homomorphism $F:\cok(M_1\cdots M_k)\to G$ is nothing but an $R$-linear map (denoted also by $F$ by abuse of notation) $F:R^n\to G$ such that $F(\im(M_1\cdots M_k))=0\in G$, the set $\Sur_{\CC_P}(\cok(M_1,\dots,M_k))$ can be equivalently described as
\begin{equation}
    \left\{ F\in \Hom(R^n, G) \;\middle| \begin{array}{l}
        \im(FM_1\cdots M_k)=0\\
        \im(FM_1\cdots M_{k-i}) = H_i \text{ for }1\leq i\leq k-1\\
        \im(F)=G
    \end{array} \right\},
\end{equation}
where the last condition takes care of the requirement that $F$ is surjective. Using the convention that $H_0=s_0(\CF)=0$ and $H_k=s_k(\CF)=G$, the above can be compactly rewritten as a canonical bijection
\begin{equation}\label{eq:sur-description}
    \Sur_{\CC_P}(\bfcok(M_1,\dots,M_k)) \leftrightarrow \set{F\in \Hom(R^n, G): \im(FM_1\cdots M_{k-i}) = H_i \text{ for }0\leq i\leq k}.
\end{equation}
It follows immediately from \eqref{eq:sur-description} that 
    \begin{multline}\label{eq:after_modawts}
        \lim_{n \to \infty} \E_{M_1,\dots,M_k}[\abs{\Sur_{\CC_P}(\bfcok(M_1,\dots,M_k)_P,\mathbf{G})}]  \\ = \lim_{n \to \infty} \sum_{F\in G^n} \Prob(\im(FM_1\cdots M_{k-i}) = H_i \text{ for all $0\leq i\leq k$}),
    \end{multline}
    so to prove \eqref{eq:chainspan_wts} it suffices to prove
    \begin{equation}\label{eq:moment-formula}
    \lim_{n \to \infty} \sum_{F\in \Hom_{R}(R^n,G)} \P_{M_1,\dots, M_k}(\im(FM_1\cdots M_{k-i}) = H_i \text{ for }0\leq i\leq k) = 1.
\end{equation}
We will massage this expression still further to match the one treated in \Cref{thm:chain:span}. For any $R$-module $G$ and $F\in \Hom_{R}(R^n,G)$, we may canonically identify $F$ with a \emph{row} vector
\begin{equation}
    F_{\mathrm{row}}=[F(\mathbf{v}_1) \; \ldots \; F(\mathbf{v}_n)]\in G^{1\times n},
\end{equation}
where $\mathbf{v}_1,\dots,\mathbf{v}_n$ is the standard basis of $R^n$. As usual, the row vector corresponding to $FM_1\cdots M_i$ is given by the matrix multiplication $F_{\mathrm{row}}M_1\cdots M_i$, and the image of $F$ is the column space of $F_{\mathrm{row}}$, which is nothing but the span of each entry of $F_{\mathrm{row}}$ (note that each column is just an entry since $F_{\mathrm{row}}$ has only one row). We use the notation $\sp(\,\cdot\,)$ for any row vector or column vector with entries in $G$ to denote the $R$-span of its entries in $G$. Thus to show \Cref{thm:moment} it suffices to show
\begin{equation}\label{eq:limit_rowvector}
    \lim_{n\to \infty}\sum_{F\in G^{1\times n}} \Prob(\sp(FM_1\cdots M_{k-i})=H_i \text{ for all $0\leq i\leq k$})=1.
\end{equation}
Now, we replace every matrix by its transpose, and note that if $M_1,\dots,M_k$ are independent and $\alpha$-balanced, then $M_1^T,\dots,M_k^T$ are also independent and $\alpha$-balanced, with potentially different measures (because the entries may not be identically distributed). Let $G^n=G^{n\times 1}$ denote column $n$-vectors with entries in $G$. Hence by setting $N_i=M_i^T$, \eqref{eq:limit_rowvector} is equivalent to
    \begin{equation}\label{eq:pre_reindexing}
        \lim_{n\to \infty}\sum_{F\in G^n} \Prob(\sp(N_{k-j}\cdots N_1F)=H_j \text{ for all $0\leq j\leq k$})=1.
    \end{equation}
    Make a cosmetic reindexing by setting $i=j+1$ for $0\leq j\leq k$ and abusing notation by writing $M_i=N_{k+1-i} \pmod{p^d}$ for $1\leq i\leq k$ (these are not the same $M_i$ as before). Then \eqref{eq:pre_reindexing} is equivalent to
    \begin{equation}\label{eq:almost_chain_span}
        \lim_{n\to\infty}\sum_{F\in G^n} \Prob(\sp(M_i\cdots M_k F)=H_{i-1} \text{ for all $1\leq i\leq k+1$})=1.
    \end{equation}
    Thus \Cref{thm:chain:span} implies \eqref{eq:almost_chain_span}, and by tracing back the above string of reductions, implies \eqref{eq:moment-formula} and hence \Cref{thm:moment} as well.
\end{proof}

\section{Lemmas for linear algebra over $\Z/a\Z$} \label{sec:supporting_lemmas}

Having gone to the trouble in the last section to change settings to that of finite rings, we will work in this setting for this and the next section. The purpose of this section is to introduce some supporting lemmas, mostly by Wood \cite{W1}, some of which were slightly extended in \cite[Section 2]{nguyen2022universality}. 

Throughout the next two sections fix $a \in \BBZ_{> 1}$ and set $R= \Z/a\Z$. Let $V=R^n$ with standard basis $\Bv_i,1 \leq i \leq n$. For $\sigma \subset [n]$ we denote by $V_{\sigma^c}$ the submodule generated by $\{\Bv_i: i \in \sigma^c\}$. Throughout the paper, to declutter notation we will write $(x_1,\ldots,x_n) \in R^n$ for usual (column) vectors, and similarly for vectors in e.g. $G^n$ where $G$ is a group, rather than using the notation $(x_1,\ldots,x_n)^T$. 

Sometimes it is convenient to identify $F \in \Hom(V,G)$ with the vector $(F(\Bv_1),\dots, F(\Bv_n)) \in G^n$, and we will usually abuse notation and view $F$ as a vector rather than a map. In particular, if $X = (x_1,\ldots,x_n) \in R^n$ is a vector, we write $\lang F, X \rang := \sum_{i=1}^n x_i F(\Bv_i)$; note this is not a usual dot product because $(F(\Bv_1),\dots, F(\Bv_n)) \in G^n$ and $(x_1,\ldots,x_n) \in R^n$ live in different spaces, though the formula is the same. If $M$ is an $n \times n$ matrix with entries in $R$, then for any $R$-module $G$, $M$ defines a linear map $G^n \to G^n$ by usual matrix multiplication, and we write $MF$ for the image of the vector $(F(\Bv_1),\dots, F(\Bv_n)) \in G^n$ under this map.


\subsection{Codes} 
\begin{definition} Given $w \le n$, we say that $F \in \Hom(V,G)$ is a code of distance $w$ if for every $\sigma \subset [n]$ with $\abs{\sigma} <w$ we have $F (V_{\sigma^c})=G$.
\end{definition}
Note that the number of codes can be approximated very precisely (see \cite[Claim 2.4]{nguyen2022universality})

\begin{claim}\label{claim:code:count} Let $G$ be a finite abelian group and let $\CC(G)$ be the number of codes (defined as $F(V)$) of distance $\delta n$ in $G$. We have 
$$\abs{\CC(G)} = (1+K'\exp(-c'_\delta n))\abs{G}^n,$$
where $K'$ depends on $G$ and $c_\delta$ depends on $\delta$.
\end{claim}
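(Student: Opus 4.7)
The plan is to upper bound the number of \emph{non}-codes in $G^n$, under the natural identification $F \leftrightarrow (F(\Bv_1),\dots,F(\Bv_n))$, and to combine this with the trivial bound $\abs{\CC(G)}\leq \abs{G}^n$. The starting observation is that $F$ fails to be a code of distance $\delta n$ exactly when there exists $T\subeq [n]$ with $\abs{T}>(1-\delta)n$ such that $\lang F(\Bv_i):i\in T\rang$ is a proper subgroup of $G$. Since every proper subgroup of a finite abelian group is contained in a maximal one and the set $\CM(G)$ of maximal proper subgroups is finite (depending on $G$), this is equivalent to the existence of such $T$ together with some $H\in \CM(G)$ containing $F(\Bv_i)$ for all $i\in T$. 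Each such $H$ has prime index $[G:H]\geq p$, where $p$ denotes the smallest prime divisor of $\abs{G}$.

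Next I would apply a double union bound. For fixed $H\in \CM(G)$ and $T\subeq [n]$ of size $t$, the number of $F\in G^n$ with $F(\Bv_i)\in H$ for all $i\in T$ is $\abs{H}^{t}\abs{G}^{n-t}=\abs{G}^n[G:H]^{-t}\leq \abs{G}^n p^{-t}$, so that
\begin{equation}
\#\set{\text{non-codes}}\leq \abs{\CM(G)}\cdot \abs{G}^n\sum_{t>(1-\delta)n}\binom{n}{t}p^{-t}.
\end{equation}
For $\delta<1/2$ and $t>(1-\delta)n$, the standard entropy bound yields $\binom{n}{t}\leq 2^{\eta(\delta)n}$ with $\eta(\delta):=-\delta\log_2\delta-(1-\delta)\log_2(1-\delta)$, so each summand is at most $(2^{\eta(\delta)}p^{-(1-\delta)})^n$. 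Provided $\delta$ lies below the threshold where $\eta(\delta)<(1-\delta)\log_2 p$, this quantity decays as $e^{-c'_\delta n}$ for some $c'_\delta>0$; absorbing the polynomial factor from the $O(n)$ summands into $K'=K'(G)$ yields $\#\set{\text{non-codes}}\leq K'e^{-c'_\delta n}\abs{G}^n$, which gives the claim.

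The argument is essentially a routine union bound, so I do not foresee serious obstacles. The one subtlety worth flagging is that both the exponential rate $c'_\delta$ and the admissible range of $\delta$ depend on $G$ through its smallest prime divisor; this is consistent with the statement's allowing $K'$ to depend on $G$ and $c'_\delta$ on $\delta$, and corresponds precisely to the regime $\eta(\delta)<(1-\delta)\log_2 p$.
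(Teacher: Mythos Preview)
Your argument is correct and is the standard union-bound proof over maximal subgroups; the paper itself does not prove this claim but simply cites \cite[Claim~2.4]{nguyen2022universality}, whose argument is essentially identical to yours.

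One small correction to your final paragraph: you write that the rate $c'_\delta$ depends on $G$ through its smallest prime divisor $p$, and then call this ``consistent'' with the claim's stated dependence---but it is not, since the claim asserts $c'_\delta$ depends only on $\delta$. The fix is immediate: replace $p$ by $2$ in your bound (valid since every maximal subgroup has index $\geq 2$), which yields a rate depending only on $\delta$ whenever $\eta(\delta)<1-\delta$, i.e., for all $\delta$ below an absolute threshold. The factor $\abs{\CM(G)}$ and the polynomial count of summands are then absorbed into $K'=K'(G)$ exactly as the statement allows.
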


It is convenient to work with codes because the random walk $S_k = \sum_{i=1}^k x_i F(\Bv_i)$ (in discrete time indexed by $k=1,2,\ldots,n$) spreads out in $G$ very fast, as the following lemma shows. 

\begin{lemma}\cite[Lemma 2.1]{W1}\label{lemma:code:single:1} Assume that $x_i\in R$ are independent $\al$-balanced random variables. Then for any code $F$ of distance $\delta n$ and any $g\in G$,
$$\left|\P(\lang F, X \rang  =g) - \frac{1}{\abs{G}}\right| \le \exp(-\al \delta n/a^2),$$
where $X=(x_1,\dots,x_n)$.
\end{lemma}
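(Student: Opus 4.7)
The plan is to prove this by Fourier inversion on $G$. Write
\[
    \P(\lang F, X \rang = g) - \frac{1}{|G|} = \frac{1}{|G|} \sum_{\chi \in \widehat{G},\, \chi \neq 1} \overline{\chi(g)}\, \E[\chi(\lang F, X \rang)],
\]
and use independence of the $x_i$ to factor
\[
    \E[\chi(\lang F, X \rang)] = \prod_{i=1}^n \E\!\left[\chi(F(\Bv_i))^{x_i}\right].
\]
Since the image of $F$ is killed by $a$ (as $a\Bv_i = 0$ in $R^n$), each $\chi(F(\Bv_i))$ is an $a$-th root of unity, which is the crucial feature we will exploit.

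Next I would use the code property to show that for every nontrivial character $\chi$, at least $\delta n$ of the values $\chi(F(\Bv_i))$ are non-trivial. Let $B_\chi = \{i : \chi(F(\Bv_i)) = 1\}$. If $|B_\chi^c| < \delta n$, then taking $\sigma = B_\chi^c$ in the code definition gives $F(V_{B_\chi}) = G$; but $\chi$ is identically $1$ on $F(V_{B_\chi})$ by construction, contradicting that $\chi$ is nontrivial on $G$. Hence $|B_\chi^c| \geq \delta n$ for every nontrivial $\chi$.

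Now the heart of the matter is a single-coordinate bound: for any nontrivial $a$-th root of unity $\omega$ and any $\alpha$-balanced $R$-valued $x$,
\[
    |\E[\omega^x]| \leq \exp(-c\alpha/a^2)
\]
for a numerical constant $c$. The approach is to write $1 - |\E[\omega^x]|^2 = \tfrac12 \sum_{r,s} p_r p_s |1-\omega^{r-s}|^2$ where $p_r = \P(x=r)$, and then let $d > 1$ be the order of $\omega$. For $t \not\equiv 0 \pmod{d}$, elementary trigonometry gives $|1 - \omega^t|^2 \gtrsim 1/d^2 \geq 1/a^2$. Picking any prime $p \mid d$, one has $p \mid a$, and the $\alpha$-balanced hypothesis mod $p$ gives $\P(x \not\equiv x' \pmod p) \geq \alpha$ (with $x'$ an independent copy), which forces $\P(x \not\equiv x' \pmod{d}) \geq \alpha$ since $p \mid d$. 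Combining yields $1 - |\E[\omega^x]|^2 \gtrsim \alpha/a^2$, and hence the desired exponential bound per coordinate. Taking the product over the $\geq \delta n$ nontrivial coordinates and summing over characters (controlled by the factor of $1/|G|$ out front) then gives the claim.

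The main technical obstacle is the per-coordinate bound, specifically linking the multi-prime $\alpha$-balanced hypothesis in \Cref{def:R_alpha_balanced} (which guarantees balancedness \emph{mod each prime} $p \mid a$) to the estimate for an arbitrary nontrivial $a$-th root of unity $\omega$. The reduction above via the prime $p$ dividing the order $d$ of $\omega$ is the bridge: it uses only the $p$-adic balancedness of $x$ for one suitably chosen $p \mid d \mid a$, which is precisely what the definition supplies. Constants will need to be chased, but the ratio $\alpha/a^2$ in the exponent arises naturally as the product of the lower bound $\alpha$ on $\P(x \not\equiv x' \pmod p)$ and the lower bound $\Omega(1/a^2)$ on $|1-\omega^t|^2$ for $t \not\equiv 0 \pmod d$.
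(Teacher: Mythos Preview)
Your proposal is correct and follows the standard Fourier-analytic approach; the paper itself does not prove this lemma but cites it from Wood \cite[Lemma~2.1]{W1}, whose argument is essentially the one you outline (Fourier inversion on $G$, factorization by independence, the code property forcing $\geq \delta n$ nontrivial coordinates for each nontrivial character, and a per-coordinate damping bound from $\alpha$-balancedness). Your constant chase in fact yields a comfortable margin: from $|1-\omega^t|^2 = 4\sin^2(\pi k/d) \geq 16/d^2 \geq 16/a^2$ and $\P(x\not\equiv x'\bmod p)\geq \alpha$ one gets $|\E[\omega^x]|\leq \exp(-4\alpha/a^2)$, so the product over $\geq \delta n$ coordinates and the trivial bound $(|G|-1)/|G|<1$ on the character sum give better than the stated $\exp(-\alpha\delta n/a^2)$.
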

In what follows, if not specified otherwise, $X$ is always understood as the random vector $(x_1,\dots, x_n)$ where $x_i$ are independent $\al$-balanced random variables as in Lemma \ref{lemma:code:single:1}.

Using the above result, it is not hard to deduce the following matrix form.
\begin{lemma}\cite[Lemma 2.4]{W1}\label{lemma:code:single:matrix:1} Assume that the entries of $M$ of size $n$ are independent $\al$-balanced random variables. For code $F$ of distance $\delta n$, for any vector $A \in G^n$ 
$$\left|\P(M F = A) - \frac{1}{\abs{G}^{n}}\right| \le \frac{K \exp(-cn)}{\abs{G}^{n}},$$
where $K,c$ depend on $a, G, \al$ and $\delta$.
\end{lemma}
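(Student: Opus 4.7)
The plan is to deduce this matrix statement from the scalar statement in Lemma 5.2 by exploiting the independence of the rows of $M$. Writing $M = (M_{ij})_{1 \leq i,j \leq n}$ and $MF = ((MF)_1, \ldots, (MF)_n) \in G^n$, the $i$-th coordinate of $MF$ is $(MF)_i = \sum_{j=1}^n M_{ij} F(\mathbf{v}_j) = \langle F, \mathrm{row}_i(M)\rangle$. Since the entries of $M$ are assumed independent, so are its rows, so the $G$-valued random variables $(MF)_1, \ldots, (MF)_n$ are mutually independent. Hence
\begin{equation*}
    \P(MF = A) = \prod_{i=1}^n \P\bigl((MF)_i = A_i\bigr).
\end{equation*}

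Now I would apply Lemma 5.2 to each factor. Since the row $\mathrm{row}_i(M)$ is a vector of independent $\alpha$-balanced $R$-valued entries, Lemma 5.2 gives
\begin{equation*}
    \P\bigl((MF)_i = A_i\bigr) = \frac{1}{\abs{G}}\bigl(1 + \eta_i\bigr), \qquad \abs{\eta_i} \leq \abs{G}\exp(-\alpha\delta n/a^2).
\end{equation*}
Multiplying these $n$ identities,
\begin{equation*}
    \P(MF = A) = \frac{1}{\abs{G}^n}\prod_{i=1}^n(1+\eta_i),
\end{equation*}
so it suffices to bound $\bigl|\prod_{i=1}^n(1+\eta_i) - 1\bigr|$. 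Using $\prod_i(1+\abs{\eta_i}) \leq \exp(\sum_i \abs{\eta_i})$ and the uniform bound on $\abs{\eta_i}$,
\begin{equation*}
    \Bigl| \prod_{i=1}^n(1+\eta_i) - 1 \Bigr| \leq \exp\!\bigl(n\abs{G}\exp(-\alpha\delta n/a^2)\bigr) - 1.
\end{equation*}

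For $n$ large enough (depending on $a$, $G$, $\alpha$, $\delta$) the exponent on the right tends to zero, so the right-hand side is bounded by, say, $2n\abs{G}\exp(-\alpha\delta n/a^2)$. Absorbing the polynomial factor $n$ into the exponential by taking any $c < \alpha\delta/a^2$ yields a bound of the form $K\exp(-cn)$ for an appropriate constant $K = K(a,G,\alpha,\delta)$; for the finitely many small $n$ not covered by this asymptotic regime, we simply enlarge $K$ to make the trivial bound $\P(MF=A) \leq 1 \leq K\exp(-cn)/\abs{G}^n$ hold. There is no serious obstacle here---the only point requiring a moment of care is the passage from the additive error in the scalar Lemma 5.2 to a multiplicative error whose $n$-fold product does not blow up, and this works because the scalar error $\exp(-\alpha\delta n/a^2)$ decays exponentially, easily swallowing the linear factor $n$ produced by taking the logarithm of the product.
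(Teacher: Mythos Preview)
Your proposal is correct and follows exactly the approach the paper indicates: the paper does not give a proof but simply says ``Using the above result, it is not hard to deduce the following matrix form'' (citing \cite[Lemma 2.4]{W1}), and the intended deduction is precisely to use independence of the rows of $M$ to factor $\P(MF=A)$ and apply Lemma~\ref{lemma:code:single:1} to each factor, as you do. The bookkeeping with the multiplicative errors $\eta_i$ and the absorption of the polynomial factor $n$ into the exponential are handled correctly.
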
  

\begin{remark}
In our applications, $G$ will always be a fixed group (or perhaps summed over a finite collection of groups), so the dependence of the constants on $G$ which we allow in Lemma \ref{lemma:code:single:matrix:1} and similar results does not create any issue with our asymptotics.
\end{remark}

We will also need the following useful result (see also \cite[Lemma 2.3]{nguyen2022universality}), which can be deduced directly from Lemma \ref{lemma:code:single:matrix:1} above.
\begin{lemma}\label{lemma:code:subgp} Let $\delta$ be sufficiently small. Assume that $F\in Hom(V,G)$ is a code of distance $\delta n$. Assume that the entries of the matrix $M$ of size $n$ are iid copies of $\xi$ satisfying \eqref{eqn:alpha_R}. Then for any $H \le G$
$$\P(M F \mbox{ is a code of distance $\delta n$ in $H_{}$}) = \abs{H_{}}^{n} \frac{1+ O( \exp(-c''n))}{\abs{G}^n},$$
where $c''$ depends on $a,G,H, \delta, \al$.
\end{lemma}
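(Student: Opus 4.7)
The plan is to combine the two pointwise ingredients already stated: the near-uniform distribution of $MF$ over $G^n$ from Lemma \ref{lemma:code:single:matrix:1}, and the near-$|H|^n$ count of codes from Claim \ref{claim:code:count} (applied to $H$ rather than $G$).

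First, I would unpack the event. The definition of a code of distance $\delta n$ in $H$ requires both that all entries of $MF$ lie in $H$ (so the map factors through $H$) and that, viewed as a map $V \to H$, the restriction to any $V_{\sigma^c}$ with $|\sigma| < \delta n$ is surjective onto $H$. Hence
\begin{equation*}
    \{MF \text{ is a code of distance } \delta n \text{ in } H\} = \bigsqcup_{A \in \CC_H} \{MF = A\},
\end{equation*}
where $\CC_H \subset H^n$ denotes the set of vectors (equivalently, maps $V \to H$) that are codes of distance $\delta n$ in $H$.

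Next I would sum Lemma \ref{lemma:code:single:matrix:1} over $A \in \CC_H$. Since $F$ is a code of distance $\delta n$ in $G$ by hypothesis and each $A \in \CC_H$ is a particular element of $G^n$, the lemma gives $\P(MF = A) = |G|^{-n}(1 + O(\exp(-cn)))$ with a constant uniform in $A$ (depending only on $a, G, \alpha, \delta$). Therefore
\begin{equation*}
    \P(MF \text{ is a code of distance } \delta n \text{ in } H) = |\CC_H| \cdot \frac{1 + O(\exp(-cn))}{|G|^n}.
\end{equation*}

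Finally, I would invoke Claim \ref{claim:code:count} with $H$ in place of $G$, which yields $|\CC_H| = (1 + O(\exp(-c'_\delta n)))|H|^n$ with constants depending on $H$ and $\delta$. Multiplying the two estimates and combining the two exponentially small errors into a single $O(\exp(-c''n))$ term gives the claimed formula. There is no real obstacle: the entire argument is essentially a one-line decomposition followed by application of two results stated just above. The only point requiring mild care is bookkeeping of which constants depend on $a, G, H, \delta, \alpha$ (all permitted), versus which depend on $n$ (none).
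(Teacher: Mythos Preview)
Your argument is correct and matches the paper's approach: the paper simply states that the lemma ``can be deduced directly from Lemma~\ref{lemma:code:single:matrix:1} above,'' and you have spelled out precisely that deduction together with Claim~\ref{claim:code:count}. The decomposition into $\bigsqcup_{A \in \CC_H}\{MF=A\}$, followed by the pointwise estimate and the code count, is exactly what is intended.
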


\subsection{Non-codes} Next, for non-code $F$, the random walk $\lang F, X \rang $ does not converge quickly to the uniform distribution on $G$. However it is likely to be uniform over the subgroup where the restriction of $F$ is a code. 

\begin{definition}
For $D =\prod_i p_i^{e_i}$ let 
$$\ell(D):= \sum_i e_i.$$ 
\end{definition}

In all results introduced below we remark that $F$ is not necessarily a surjection. 

\begin{definition}\label{def:depth} For a real $\delta>0$, the $\delta$-depth of $F \in \Hom(V,G)$ is the maximal positive integer $D$ such that there exists $\sigma \subset [n]$ with $\abs{\sigma} < \ell(D) \delta n$ such that $D = \abs{G/F(V_{\sigma^c})}$, or is 1 if there is no such $D$.
 \end{definition}
So roughly speaking the $\delta$-depth measures the maximum of $\abs{G/F(V_{\sigma^c})}$ over $\sigma$ of size significantly smaller than $\delta n$. The depth is large if there exists such $\sigma$ where $F(V_{\sigma^c})$ is a small subgroup of $G$. The reason for this definition of depth is the following lemma, which shows that depth encodes how much one has to restrict $F$ to obtain a code.


\begin{lemma}\cite[Lemma 2.6]{W1}\label{lemma:non-code:count:1} 
The number of $F\in \Hom(V,G)$ with depth $D$ is at most
$$K \binom{n}{\lceil \ell(D) \delta n\rceil -1} \abs{G}^n D^{-n+\ell(D) \delta n},$$
where $K$ depends on $a$ and $G$. 
\end{lemma}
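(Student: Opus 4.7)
The plan is to overcount $F$ of depth $D$ by recording a ``witness'' for that depth: by definition of depth, if $F$ has $\delta$-depth $D$ then there exist a subset $\sigma \subset [n]$ with $|\sigma| \leq \lceil \ell(D)\delta n \rceil - 1$ and a subgroup $H \leq G$ of index $|G/H|=D$ such that $F(V_{\sigma^c}) = H$. Hence the quantity to bound is dominated by the number of triples $(\sigma,H,F)$ where $(\sigma,H)$ is as above and $F \in \Hom(V,G)$ satisfies $F(V_{\sigma^c}) \subseteq H$ (weakening the equality to an inclusion only inflates the count, which is fine for an upper bound).

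For fixed $\sigma$ and $H$, the coordinates $F(\Bv_i)$ for $i \in \sigma^c$ are free in $H$ and the coordinates $F(\Bv_i)$ for $i \in \sigma$ are free in $G$, so there are
\begin{equation}
    |H|^{n-|\sigma|}|G|^{|\sigma|} = |G|^n D^{-(n-|\sigma|)}
\end{equation}
such $F$. The number of subgroups $H \leq G$ of index $D$ is bounded by the total number of subgroups of $G$, a constant $K_G$ depending only on $G$. Summing over $\sigma$ of each size $s \in \{0,1,\dots,m\}$ with $m := \lceil \ell(D)\delta n \rceil - 1$, the number of $F$ of depth $D$ is at most
\begin{equation}
    K_G \cdot |G|^n \cdot \sum_{s=0}^{m} \binom{n}{s} D^{-(n-s)}.
\end{equation}

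The key step is to control this sum by its last term. The ratio of consecutive summands is $\binom{n}{s+1}D^{s+1}/\bigl(\binom{n}{s}D^s\bigr) = (n-s)D/(s+1)$, and for $\delta$ small (so that $m \leq \ell(D)\delta n \ll n$) this ratio is bounded below by some constant strictly greater than $1$ uniformly in $s \leq m$ and $D \geq 2$; hence the sum is a geometric series dominated by its largest term, yielding $\sum_{s=0}^m \binom{n}{s} D^s \leq C_\delta \binom{n}{m} D^{m}$. Using $m < \ell(D)\delta n$ to write $D^m \leq D^{\ell(D)\delta n}$ then gives
\begin{equation}
    (\text{count}) \leq K_G \cdot C_\delta \cdot \binom{n}{\lceil \ell(D)\delta n \rceil - 1} |G|^n D^{-n + \ell(D)\delta n},
\end{equation}
which is the claimed bound with $K = K_G C_\delta$. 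The degenerate case $D=1$ is trivial since the right-hand side is already $\geq |G|^n$, an obvious upper bound.

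The main obstacle I anticipate is simply the bookkeeping around the ``extract the largest term'' step: one must verify uniformity of the geometric-series constant in $D$ (handled via $D \geq 2$) and confirm that $\delta$ small enough (independent of $D$, which is bounded since $D$ divides $|G|$) suffices for the ratio to exceed $1$. Everything else is elementary counting.
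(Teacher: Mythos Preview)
The paper does not give its own proof---the lemma is quoted from \cite[Lemma 2.6]{W1}---and your witness-counting argument (record a pair $(\sigma,H)$ with $[G:H]=D$ and $F(V_{\sigma^c})\subseteq H$, then count) is exactly the standard one used there. Two small corrections. First, your $D=1$ remark is mistaken: $\ell(1)=0$ gives $\binom{n}{\lceil 0\rceil-1}=\binom{n}{-1}=0$, so the right-hand side vanishes rather than being $\geq |G|^n$; this is harmless since the lemma is only invoked for $D\geq 2$ in the paper (see the sums over $D_{k-1}\geq 2$, $D_k\geq 2$ in the proof of Proposition~\ref{prop:single:k}). Second, regarding the $\delta$-dependence of your constant: since $D\mid |G|$ forces $\ell(D)\leq \ell(|G|)$, once $\delta$ is below a threshold depending only on $G$ (say $\ell(|G|)\delta<1/2$) the geometric-series ratio $(n-s)D/(s+1)$ is at least $2$ for all relevant $s$, and your $C_\delta$ becomes the absolute constant $2$---so $K$ does depend only on $a$ and $G$ as asserted.
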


The following is \cite[Lemma 2.7]{nguyen2022universality}\footnote{ Although the result in \cite[Lemma 2.7]{nguyen2022universality} is stated for vectors $X$ with iid $\al$-balanced entries, the proof automatically extends to independent, $\al$-balanced ones that are not necessarily iid.}
\begin{lemma}\label{lemma:non-code:single:1} 
Let $F \in \Hom(V,G)$ have $\delta$-depth $D>1$ and $\abs{G/F(V)}<D$. Then for any $g\in G$
$$\P(\lang F, X \rang   = g) \le (1-\al) \left(\frac{D}{\abs{G}} + \exp(-\al \delta n/a^2)\right).$$
\end{lemma}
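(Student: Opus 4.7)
The plan is to leverage the subset $\sigma$ furnished by the depth hypothesis to split $\lang F, X\rang$ into a ``code contribution'' on $\sigma^c$ and a small ``residual contribution'' on $\sigma$ that absorbs the factor $1-\al$. By the definition of $\delta$-depth, I would fix $\sigma\subset[n]$ with $\abs{\sigma}<\ell(D)\delta n$ and $H := F(V_{\sigma^c})$ of index exactly $D$ in $G$, so $\abs{H}=\abs{G}/D$. Decompose $\lang F, X\rang = Y + Z$ with $Y = \sum_{i\in\sigma}x_i F(\Bv_i)$ and $Z = \sum_{i\in\sigma^c}x_i F(\Bv_i)\in H$. The aim is to bound $\P(Y\equiv g \text{ in } G/H)$ by $1-\al$ and the conditional probability $\P(Z = g-Y\mid Y)$ (on the event $g-Y\in H$) by essentially $1/\abs{H} = D/\abs{G}$.

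The first sub-step is to upgrade $F|_{V_{\sigma^c}}\colon V_{\sigma^c}\to H$ to a code of distance $\delta n$ in $H$, using the maximality built into the depth definition. If instead there were $\tau\subseteq \sigma^c$ with $\abs{\tau}<\delta n$ and $H' := F(V_{(\sigma\cup\tau)^c})\subsetneq H$, then $D' := \abs{G/H'}$ would be a proper multiple of $D$, hence $D'\ge 2D$ and $\ell(D')\ge \ell(D)+1$. Therefore $\abs{\sigma\cup\tau}<(\ell(D)+1)\delta n\le \ell(D')\delta n$, witnessing depth $\ge D'>D$ and contradicting the definition of depth. With $F|_{V_{\sigma^c}}$ a code of distance $\delta n$ in $H$, \Cref{lemma:code:single:1} yields
\begin{equation}
\P(Z = g-Y\mid Y)\le \frac{1}{\abs{H}} + \exp(-\al\delta n/a^2) = \frac{D}{\abs{G}}+\exp(-\al\delta n/a^2)
\end{equation}
on the event $g-Y\in H$, and $0$ otherwise.

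For the second sub-step I would use the hypothesis $\abs{G/F(V)}<D$ precisely to locate one coordinate $i_0\in\sigma$ along which the walk still moves nontrivially modulo $H$. Since $H\subseteq F(V)\subseteq G$ and $\abs{G/F(V)}<\abs{G/H}$, one has $F(V)\supsetneq H$, so some $i_0\in\sigma$ satisfies $\overline{F(\Bv_{i_0})}\ne 0$ in $G/H$. Conditioning on $(x_i)_{i\in\sigma\setminus\{i_0\}}$, the event $\overline Y = \overline g$ reduces to a single equation $x_{i_0}\overline{F(\Bv_{i_0})}=c$ in $G/H$ for some $c$. The element $\overline{F(\Bv_{i_0})}$ has some order $m>1$ dividing $a$; picking any prime $p\mid m$ and applying the $\al$-balanced hypothesis modulo $p$ gives
\begin{equation}
\P\bigl(x_{i_0}\overline{F(\Bv_{i_0})}=c\bigr)\le 1-\al
\end{equation}
uniformly in $c$, and integrating out the remaining conditioning preserves this bound. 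Multiplying the two displays then gives the stated estimate.

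The subtlest point is noticing that the hypothesis $\abs{G/F(V)}<D$ (as opposed to merely $\abs{G/H}=D$) is exactly what guarantees such an $i_0\in\sigma$ exists: without it, every $F(\Bv_i)$ for $i\in\sigma$ could lie in $H$, the residual walk $Y$ could be frozen in $G/H$, and no savings factor $1-\al$ would be available. Everything else is a routine combination of the code estimate \Cref{lemma:code:single:1} with the definition of $\al$-balancedness.
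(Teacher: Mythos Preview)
The paper does not prove this lemma itself; it imports it as \cite[Lemma 2.7]{nguyen2022universality} (with a footnote observing that the iid hypothesis there can be relaxed to independence). Your argument is correct and is essentially the standard proof of this statement: split $\langle F,X\rangle$ across the depth witness $\sigma$, use maximality of $D$ to certify that $F|_{V_{\sigma^c}}$ is a code of distance $\delta n$ in $H$ (so \Cref{lemma:code:single:1} applies to $Z$), and use the hypothesis $\abs{G/F(V)}<D$ to locate a coordinate $i_0\in\sigma$ with $\overline{F(\Bv_{i_0})}\neq 0$ in $G/H$, from which the $\alpha$-balanced condition modulo a prime dividing the order of $\overline{F(\Bv_{i_0})}$ extracts the factor $1-\alpha$. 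The only points worth noting are bookkeeping ones you handled correctly: that $\ell$ is completely additive so $\ell(D')\ge \ell(D)+1$, and that the prime $p$ you pick divides $a$ (since the order $m$ divides $a$ as $G/H$ is an $R$-module), so the $\alpha$-balanced hypothesis indeed applies.
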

Using this result, we can obtain the following (see also \cite[Lemma 2.8]{nguyen2022universality}).
\begin{lemma}\label{lemma:non-code:1:matrix}
 If $F \in \Hom(V,G)$ has $\delta$-depth $D>1$ and $\abs{G/F(V)}<D$ as in the previous lemma, then for any $A \in G^n$,
$$\P(MF = A) \le K \exp(-\al n)\frac{D^n}{\abs{G}^n},$$
where $K$ depends on $a, G, \al$ and $\delta$.
\end{lemma}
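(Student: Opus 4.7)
The plan is to reduce the matrix bound to the single-vector bound of Lemma \ref{lemma:non-code:single:1} by exploiting independence across rows of $M$. Viewing $F$ as a column vector $(F(\mathbf{v}_1),\ldots,F(\mathbf{v}_n)) \in G^n$, the equation $MF = A$ factors row by row: the $i$-th coordinate is $\langle \mathrm{row}_i(M), F\rangle = A_i$. Since the entries of $M$ are independent, the rows $\mathrm{row}_1(M),\ldots,\mathrm{row}_n(M)$ are independent $\alpha$-balanced random vectors, so
\begin{equation*}
\P(MF = A) \;=\; \prod_{i=1}^n \P\bigl(\langle \mathrm{row}_i(M), F\rangle = A_i\bigr).
\end{equation*}

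Next I would apply Lemma \ref{lemma:non-code:single:1} to each factor. Each hypothesis on $F$ (namely $\delta$-depth $D > 1$ and $\abs{G/F(V)} < D$) is intrinsic to $F$ and independent of which row we use, so the lemma yields uniformly
\begin{equation*}
\P\bigl(\langle \mathrm{row}_i(M), F\rangle = A_i\bigr) \;\le\; (1-\alpha)\left(\frac{D}{\abs{G}} + \exp(-\alpha\delta n / a^2)\right).
\end{equation*}
Multiplying the $n$ bounds gives
\begin{equation*}
\P(MF = A) \;\le\; (1-\alpha)^n \left(\frac{D}{\abs{G}} + \exp(-\alpha\delta n/a^2)\right)^n.
\end{equation*}
Using the elementary inequality $(1-\alpha)^n \le \exp(-\alpha n)$ handles the first factor and produces the desired $\exp(-\alpha n)$ decay.

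The step requiring a little attention is absorbing the additive error term $\exp(-\alpha\delta n/a^2)$ inside the parenthesis. Since $D \ge 2$ and $\abs{G}$ is fixed, the ratio $D/\abs{G}$ is bounded below by the absolute constant $2/\abs{G}$, while $\exp(-\alpha\delta n/a^2)$ decays exponentially in $n$. Thus for $n$ large enough we have $|G|\exp(-\alpha\delta n / a^2)/D \le 1/n$, whence
\begin{equation*}
\left(\frac{D}{\abs{G}} + \exp(-\alpha\delta n / a^2)\right)^n \;=\; \left(\frac{D}{\abs{G}}\right)^n \left(1 + \frac{\abs{G}\exp(-\alpha\delta n/a^2)}{D}\right)^n \;\le\; e\left(\frac{D}{\abs{G}}\right)^n.
\end{equation*}
The finitely many small-$n$ cases are trivially absorbed into the constant $K$, since in that range $\P(MF=A) \le 1 \le K\exp(-\alpha n)(D/\abs{G})^n$ for $K$ chosen large enough depending on $a, G, \alpha, \delta$. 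Combining these ingredients yields the claimed bound with $K$ depending on $a, G, \alpha, \delta$. There is no serious obstacle; the only nontrivial observation is that $D/\abs{G}$ is bounded away from zero uniformly in $n$, which is what allows the extra additive term to be absorbed as a multiplicative constant rather than leaking into the rate.
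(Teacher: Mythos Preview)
Your argument is correct and is exactly the intended one: the paper does not spell out the proof but indicates it follows from Lemma~\ref{lemma:non-code:single:1} by the row-by-row factorization you describe (cf.~\cite[Lemma~2.8]{nguyen2022universality}). The only subtleties---absorbing the additive $\exp(-\alpha\delta n/a^2)$ term using $D/\abs{G}\geq 2/\abs{G}$, and handling finitely many small $n$ via the constant $K$---are both addressed cleanly in your write-up.
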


\section{Moment computation: Proof of \Cref{thm:chain:span}} \label{sec:moment_computation}

Some of the ideas of the proof are motivated by \cite[Section 3]{nguyen2022universality}, although our method is more direct and significantly shorter.  

\subsection{Key inductive step.} Let $F_k=F \in G^n$ such that $\sp(F_k)=G=H_k$. More generally, for each $1\le i\le k+1$, let $F_{i-1} = M_i\dots M_k F_k$. In this notation, for \Cref{thm:chain:span} we are interested in the sum
$$\sum_{F_k\in H_k^n,\dots, F_1\in H_1^n, \sp(F_i) = H_i} \P(M_kF_k =F_{k-1}, M_{k-1} F_{k-1} = F_{k-2},\dots, M_2 F_2 =F_1, M_1F_1=0).$$
Let $F_k \in H_k^n$ be fixed. The following is key to our analysis.

\begin{proposition}\label{prop:single:k} With the same assumption as in \Cref{thm:chain:span}, the following holds for $\delta$ sufficiently small, and for some constants $c,K$ depending on $k,\al, G,a,\delta$.

 \begin{enumerate}[(i)]
\item  (Code) Assume that $F_k$ spans $G$ (i.e. $H_k$) and is a code of distance $\delta n$ in $G$. Then 
\begin{align*}
 \Big |\sum_{F_{k-1}\in H_{k-1}^n,\dots, F_1\in H_1^n, \sp(F_i) = H_i} & \P(M_kF_k =F_{k-1}, M_{k-1} F_{k-1} = F_{k-2},\dots, M_2 F_2 =F_1, M_1F_1=0) -\frac{1}{\abs{G}^n} \Big |\\
&\le  K \frac{\exp(-c n)}{\abs{G}^n}.
\end{align*}
\vskip .05in
\item (Non-code) Assume that $F_k$ spans $G$, and the $\delta$-depth of $F_k$ is $D_k\ge 2$. Then 
\begin{align*}
 \sum_{F_{k-1}\in H_{k-1}^n,\dots, F_1\in H_1^n, \sp(F_i) = H_i} & \P\Big(M_kF_k =F_{k-1}, M_{k-1} F_{k-1} = F_{k-2},\dots, M_2 F_2 =F_1, M_1F_1=0\Big) \\
& \le K \exp(-\al n/2)  \frac{D_k^n}{\abs{G}^n}.
\end{align*}
\end{enumerate}
\end{proposition}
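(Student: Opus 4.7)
The plan is to prove (i) and (ii) simultaneously by induction on $k$, exploiting independence of $M_1,\ldots,M_k$ to set up a clean recursion. Introduce
\begin{equation}
    T_k(F_k) := \sum_{\substack{F_{k-1}\in H_{k-1}^n,\ldots,F_1\in H_1^n \\ \sp(F_i)=H_i,\ 1\le i\le k-1}} \prod_{i=1}^k \P\parens*{M_iF_i=F_{i-1}}
\end{equation}
(with the convention $F_0:=0$); by independence of the $M_i$'s, $T_k(F_k)$ is precisely the common left-hand side of parts (i) and (ii). The basic recursion is
\begin{equation}\label{eq:prop-plan-rec}
    T_k(F_k)=\sum_{\substack{F_{k-1}\in H_{k-1}^n\\ \sp(F_{k-1})=H_{k-1}}} \P\parens*{M_kF_k=F_{k-1}}\,T_{k-1}(F_{k-1}),
\end{equation}
where $T_{k-1}$ is the analogous quantity formed from $M_1,\ldots,M_{k-1}$ and the truncated flag $0\le H_1\le\cdots\le H_{k-1}$ (playing the role of the full flag, with $H_{k-1}$ now in the ambient position). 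The base case $k=1$ reduces to $T_1(F_1)=\P(M_1F_1=0)$, and (i), (ii) then follow immediately from Lemmas \ref{lemma:code:single:matrix:1} and \ref{lemma:non-code:1:matrix} applied with $A=0$.

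For the inductive step, split the sum \eqref{eq:prop-plan-rec} according to whether $F_{k-1}$ is a code of distance $\delta n$ in $H_{k-1}$ or merely spans $H_{k-1}$ with some $\delta$-depth $D_{k-1}\ge 2$. In the code piece, $\P(M_kF_k=F_{k-1})$ is controlled by Lemma \ref{lemma:code:single:matrix:1} in case (i) (giving $\abs{G}^{-n}(1+O(e^{-cn}))$) and by Lemma \ref{lemma:non-code:1:matrix} in case (ii) (giving $\le Ke^{-\al n}D_k^n/\abs{G}^n$, using that $F_k$ spans $G$ and hence $\abs{G/F_k(V)}=1<D_k$). Combining with the inductive code hypothesis $T_{k-1}(F_{k-1})=\abs{H_{k-1}}^{-n}(1+O(e^{-cn}))$ and the asymptotic code count $(1+O(e^{-cn}))\abs{H_{k-1}}^n$ from Claim \ref{claim:code:count}, the factors of $\abs{H_{k-1}}^n$ telescope and produce the desired main term $\abs{G}^{-n}(1+O(e^{-cn}))$ in case (i), and the acceptable bound $Ke^{-\al n}D_k^n/\abs{G}^n$ in case (ii).

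For the non-code piece, further stratify by $\delta$-depth $D_{k-1}\ge 2$, combining the same two bounds on $\P(M_kF_k=F_{k-1})$ with the inductive non-code estimate $T_{k-1}(F_{k-1})\le Ke^{-\al n/2}D_{k-1}^n/\abs{H_{k-1}}^n$ and the non-code count from Lemma \ref{lemma:non-code:count:1}. After cancellation of the $\abs{H_{k-1}}^n$ and $D_{k-1}^{-n}$ factors, one is left with a finite sum over divisors $D_{k-1}$ of $\abs{H_{k-1}}$ of terms of the form
\begin{equation}
    \frac{K\,D_k^n}{\abs{G}^n}\,e^{-c_0(\al) n}\,\binom{n}{\lceil \ell(D_{k-1})\delta n\rceil -1}\,D_{k-1}^{\ell(D_{k-1})\delta n}
\end{equation}
(without the $D_k^n$ in case (i)), where $c_0(\al)\ge \al/2$ in case (i) and $c_0(\al)\ge 3\al/2$ in case (ii). The main obstacle is the entropy-vs-decay comparison here: since $D_{k-1}$ ranges over a finite set of divisors of $\abs{H_{k-1}}$ and $\ell(D_{k-1})$ is bounded in terms of $\abs{G}$, Stirling-type estimates give $\binom{n}{\lceil \ell(D_{k-1})\delta n\rceil}D_{k-1}^{\ell(D_{k-1})\delta n}\le e^{C\delta n}$ for some $C$ depending only on $G$ and $a$. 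Taking $\delta$ small enough that $C\delta<c_0(\al)/2$ absorbs the non-code contribution into the claimed error in both cases, closing the induction and proving the proposition.
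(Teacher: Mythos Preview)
Your proposal is correct and follows essentially the same approach as the paper: simultaneous induction on $k$ with base case via Lemmas \ref{lemma:code:single:matrix:1} and \ref{lemma:non-code:1:matrix}, the recursion \eqref{eq:prop-plan-rec} split according to whether $F_{k-1}$ is a code in $H_{k-1}$ or has $\delta$-depth $D_{k-1}\ge 2$, and the same combination of Claim \ref{claim:code:count}, Lemma \ref{lemma:non-code:count:1}, and the inductive hypotheses. Your explicit remark that $\abs{G/F_k(V)}=1<D_k$ (needed to invoke Lemma \ref{lemma:non-code:1:matrix}) and your clean entropy-vs-decay bookkeeping match exactly what the paper does implicitly when it writes ``provided that $\delta$ was chosen sufficiently small.''
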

Note that this result is somewhat similar to \cite[Proposition 3.1]{nguyen2022universality}, although the event under consideration is more local, that we are interested in the event $M_kF_k =F_{k-1}, M_{k-1} F_{k-1} = F_{k-2},\dots, M_2 F_2 =F_1, M_1F_1=0$ rather than just $M_1\dots M_k F_k=0$. In fact,  \cite[Proposition 3.1]{nguyen2022universality} follows from our current Proposition \ref{prop:single:k}. It is interesting (but not totally unexpected) that the main term $\frac{1}{\abs{G}^n}$ in the estimate for Code (case (i)) is independent of the choices of $H_1,\dots, H_{k-1}$.

\begin{proof}[Proof of Proposition \ref{prop:single:k}]
In what follows $K$ and $c$ may vary, and the implied constants in $O(.)$ are allowed to depend on $k,\al, G,a$ and $\delta$.  

We prove (i) and (ii) together by induction on $k$, assuming both (i) and (ii) hold for $k-1$ as the inductive hypothesis. When $k=1$, (i) and (ii) follow from Lemma \ref{lemma:code:single:matrix:1} and Lemma \ref{lemma:non-code:1:matrix} respectively. Next we consider $k\ge 2$. 

\vskip .1in

{\bf Codes.} We first prove (i) by working with $F_k$ a code of distance $\delta n$. 

We consider the event (in the $\sigma$-algebra generated by $M_k$) that $M_kF_k$ spans $H_{k-1}$ in two ways 
\begin{enumerate}[(1)]
\item $M_k F_k =F_{k-1}$, where $F_{k-1}$ is a given code of distance $\delta n$ in $H_{k-1}$; 
\vskip .05in
\item $M_k F_k=F_{k-1}$, where the given $F_{k-1}$ is not a code of distance $\delta n$, and hence has $\delta$-depth $D_{k-1}\ge 2$ in $H_{k-1}$.
\end{enumerate}
For the first case, we apply the induction hypothesis for (i) to obtain
\begin{align*}
 \Big |\sum_{F_{k-2}\in H_{k-2}^n,\dots, F_1\in H_1^n, \sp(F_i) = H_i} & \P(M_{k-1}F_{k-1} =F_{k-2}, M_{k-2} F_{k-2} = F_{k-3},\dots, M_2 F_2 =F_1, M_1F_1=0) -\frac{1}{|H_{k-1}|^n} \Big |\\
&\le  K \frac{\exp(-c n)}{|H_{k-1}|^n}.
\end{align*}
For the second case, we also apply the induction hypothesis for (ii) to obtain
\begin{align*}
 \sum_{F_{k-2}\in H_{k-2}^n,\dots, F_1\in H_1^n, \sp(F_i) = H_i} & \P(M_{k-1}F_{k-1} =F_{k-2}, M_{k-2} F_{k-2} = F_{k-3},\dots, M_2 F_2 =F_1, M_1F_1=0) \\
& \le K \exp(-\al n/2)  \frac{D_{k-1}^n}{|H_{k-1}|^n}.
\end{align*}
 Hence
\begin{align*}
 &\sum_{F_{k-1}\in H_{k-1}^n,\dots, F_1\in H_1^n, \sp(F_i) = H_i}  \P(M_{k}F_{k} =F_{k-1}, M_{k-1} F_{k-1} = F_{k-2},\dots, M_2 F_2 =F_1, M_1F_1=0)\\
 &= \sum_{\text{$F_{k-1}$ is $\delta n$-code in $H_{k-1}$}} \P(M_k F_k=F_{k-1}) \sum_{\substack{F_{k-2}\in H_{k-2}^n,\dots, F_1\in H_1^n,\\ \sp(F_i) = H_i}}  \P(M_{k-1}F_{k-1} =F_{k-2},\dots, M_1F_1=0)\\
 &+ \sum_{\substack{D_{k-1} \geq 2 \\ D_{k-1} \big{\vert} |H_{k-1}|}}\sum_{\substack{F_{k-1}  \text{has $\delta$-depth $D_{k-1}$ in $H_{k-1}$},\\ \sp(F_{k-1})=H_{k-1}}} \sum_{\substack{F_{k-2}\in H_{k-2}^n,\dots, F_1\in H_1^n,\\ \sp(F_i) = H_i}} \P(M_k F_k=F_{k-1}) \P(M_{k-1}F_{k-1} =F_{k-2},\dots, M_1F_1=0)\\
  &=: S_1(H_{k-1}) + \sum_{\substack{D_{k-1} \geq 2 \\ D_{k-1} \big{\vert} |H_{k-1}|}}S_2(H_{k-1}, D_{k-1}).
 \end{align*}

For the first sum, by Claim \ref{claim:code:count}, and then by Lemma \ref{lemma:code:single:matrix:1} and the inductive hypothesis for (i) we have
\begin{align*}
S_1(H_{k-1})&=  |\CC(H_{k-1})| \frac{1+ K \exp(-c n)}{\abs{G}^n} \left(\frac{1}{|H_{k-1}|^n} + O\left(\frac{\exp(-c n)}{|H_{k-1}|^n}\right)\right) \\
&=  \frac{1}{\abs{G}^n} +  O\left(\frac{\exp(-cn)}{\abs{G}^n}\right).
\end{align*}
For the second sum, for each $D_{k-1}$ we apply Lemma \ref{lemma:non-code:count:1} and Lemma \ref{lemma:code:single:matrix:1} and the inductive hypothesis for (ii) to bound 
\begin{align*}
S_2(H_{k-1},D_{k-1})& \le  K' \binom{n}{\lceil \ell(D_{k-1}) \delta n\rceil -1} |H_{k-1}|^n D_{k-1}^{-n+\ell(D_{k-1}) \delta n}  \frac{1+ K'' \exp(-c n)}{\abs{G}^n} \times  K \exp(-\al n/2)\frac{D_{k-1}^n}{|H_{k-1}|^n} \\
&=O\left( \frac{ \exp(-\al n/4) }{\abs{G}^n}\right),
\end{align*}
where for the second line we recall that $\delta$ was chosen sufficiently small and $n$ is sufficiently large. %

Summing over divisors $D_{k-1}$ of $|H_{k-1}|$,
\begin{align*}
\sum_{D_{k-1} \geq 2, D_{k-1}\big{\vert} \left\vert H_{k-1}\right\vert} S_2(H_{k-1},D_{k-1}) =O\left( \frac{ \exp(-\al n/4) }{\abs{G}^n}\right).
\end{align*} 
Summing over we thus obtain
\begin{align}\label{eqn:code:M_k:1}
\begin{split}
& \sum_{H_{k-1}} S_1(H_{k-1}) + \sum_{\substack{D_{k-1} \geq 2 \\ D_{k-1} \big{\vert} |H_{k-1}|}}S_2(H_{k-1}, D_{k-1})  \\
& = \frac{1}{\abs{G}^n}+O\left(\frac{\exp(-cn)}{\abs{G}^n}\right) + O\left( \frac{ \exp(-\al n/4) }{\abs{G}^n}\right),
\end{split}
\end{align}
completing the estimates for codes.
\vskip .1in
{\bf Non-codes.} We next prove (ii) by working with $F_k$ of $\delta$-depth $D_k\ge 2$, where $D_k$ also divides $|H_k|=\abs{G}$. Similarly to the previous part, we again compute the probability that $M_kF_k$ spans $H_{k-1}$ in the two possible ways:

\begin{enumerate}[(1)]
\item $M_k F_k=F_{k-1}$, where $F_{k-1}$ is a given code of distance $\delta n$ in $H_{k-1}$; 
\vskip .05in
\item $M_k F_k=F_{k-1}$, where the given $F_{k-1}$ is not a code of distance $\delta n$, and hence has $\delta$-depth $D_{k-1}\ge 2$ in $H_{k-1}$.
\end{enumerate}

For the first case, by Lemma \ref{lemma:non-code:1:matrix}, the probability with respect to $M_k$ is bounded by $K \exp(-\al n)  \frac{D_{k}^n}{\abs{G}^n}$.

Hence, by induction and by the independence of $M_1,\dots, M_k$

\begin{align*}
 &\sum_{\substack{F_{k-1}\in H_{k-1}^n\\ \text{$F_{k-1}$ is code of distance $\delta n$}}} \P(M_k F_k =F_{k-1})   \sum_{F_{k-2} \in H_{k-2}^n,\dots, F_1\in H_1^n, \sp(F_i) = H_i}  \P(M_{k-1}F_{k-1} =F_{k-2},\dots, M_1F_1=0)\\
 &= |\CC(H_{k-1})|K \exp(-\al n)  \frac{D_{k}^n}{\abs{G}^n} (\frac{1+K \exp(-c n)}{|H_{k-1}|^n}) \\
 &= O\left(\exp(-\al n/2) \frac{D_{k}^n}{\abs{G}^n}\right).
 \end{align*}

For the second case (2), the probability with respect to $M_k$, by Lemma \ref{lemma:non-code:count:1} and Lemma  \ref{lemma:non-code:1:matrix}, is bounded by
$$\sum_{\substack{F_{k-1}\in H_{k-1}^n\\ \text{$F_{k-1}$ has $\delta$-depth $D_{k-1}$}}} \P(M_k F_k =F_{k-1})   \le K\binom{n}{\lceil \ell(D_{k-1}) \delta n\rceil -1} |H_{k-1}|^n D_{k-1}^{-n+\ell(D_{k-1}) \delta n}  \times  K'\exp(-\al n)  \frac{D_{k}^n}{\abs{G}^n} .$$
Hence, by induction (applied to $M_1 \cdots M_{k-1}$ with the starting vector $F_{k-1}$)
\begin{align*}
& \sum_{\substack{F_{k-1}\in H_{k-1}^n\\ \text{$F_{k-1}$ has $\delta$-depth $D_{k-1}$}}} \P(M_k F_k =F_{k-1})  \sum_{F_{k-2} \in H_{k-2}^n,\dots, F_1\in H_1^n, \sp(F_i) = H_i}  \P(M_{k-1}F_{k-1} =F_{k-2},\dots, M_1F_1=0)\\
 &\le  K\binom{n}{\lceil \ell(D_{k-1}) \delta n\rceil -1} |H_{k-1}|^n D_{k-1}^{-n+\ell(D_{k-1}) \delta n}  \times  K'\exp(-\al n)  \frac{D_{k}^n}{\abs{G}^n} \times  K \exp(-\al n/2)  \frac{D_{k-1}^n}{|H_{k-1}|^n}\\
& =O\left( \exp(-\al n/2)\frac{D_{k}^n}{\abs{G}^n}\right),
\end{align*}
provided that $\delta$ was chosen sufficiently small and $n$ is sufficiently large.
\end{proof}

\begin{proof}[Proof of \Cref{thm:chain:span}]
We have
\begin{align*}
&\sum_{F_k\in H_k^n,\dots, F_1\in H_1^n, \sp(F_i) = H_i} \P(M_kF_k =F_{k-1},\dots, M_1F_1=0)\\
& = \sum_{\substack{F_{k}\in H_{k}^n\\ \text{$F_{k}$ is code of distance $\delta n$}}}   \sum_{F_{k-1}\in H_{k-1}^n,\dots, F_1\in H_1^n, \sp(F_i) = H_i}  \P(M_{k}F_{k} =F_{k-1},\dots, M_1F_1=0)\\
&+ \sum_{\substack{D_{k} \geq 2 \\ D_{k} \big{\vert} |H_{k}|}}  \sum_{\substack{F_{k}\in H_{k}^n\\ \text{$F_{k}$ has $\delta$-depth $D_{k}$}}}  \sum_{F_{k-1}\in H_{k-1}^n,\dots, F_1\in H_1^n, \sp(F_i) = H_i}  \P(M_{k}F_{k} =F_{k-1},\dots, M_1F_1=0)\\
&=S_1 + S_2.
\end{align*}
The first sum, by Claim \ref{claim:code:count} and by (i) of Proposition \ref{prop:single:k}, can be estimated as
$$S_1=  |\CC(G)| \frac{1}{\abs{G}^n} (1+ K \exp(-c n)) = 1 + O(\exp - cn) + O(\exp - c_\delta' n).$$
The second sum, by Lemma \ref{lemma:non-code:count:1} and by (i) of Proposition \ref{prop:single:k}, can be estimated as
\begin{align*}
S_2 &=  \sum_{\substack{D_{k} \geq 2 \\ D_{k} \big{\vert} \abs{G}}} K\binom{n}{\lceil \ell(D_{k}) \delta n\rceil -1} \abs{G}^n D_{k}^{-n+\ell(D_{k}) \delta n}  \times  K \exp(-\al n/2)  \frac{D_k^n}{\abs{G}^n}\\
&= O(\exp(-\al n/4)),
\end{align*}
provided that $\delta$ was chosen sufficiently small and $n$ is sufficiently large.
\end{proof}

\begin{remark}\label{rmk:largek} By using more explicit bounds and keeping track of the errors more carefully after each step of induction (see for instance \cite[Sections 3 and 4]{nguyen2024rank}, where the concrete problem is different, but the overall method seems applicable), the conclusion 
\begin{equation}\label{eqn:largek}
\sum_{F\in G^n} \Prob_{M_1,\dots,M_k\in \Mat_n(\Zp)}\Big(\sp(M_i\cdots M_k F)=H_{i-1} \text{ for all $1\leq i\leq k+1$}\Big)=1+o_n(1)
\end{equation}
of \Cref{thm:chain:span} seems to hold even when $k\le n^{c}$ and $\abs{G} = |H_k| \le e^{n^c}$ for some sufficiently small constant $c$. However, within the setting of $k\to \infty$ with $n$, while \eqref{eqn:largek} seems surprising, Theorem \ref{thm:flag} is rather vacuous.  
\end{remark}

\begin{remark}\label{rmk:smallalpha}
For fixed \( k \), by using the more detailed bounds from \cite[Section 4]{nguyen2022random} instead of those from Section 5 above, it seems possible that our results (Propositions \ref{thm:chain:span} and \ref{prop:single:k}) can be extended to the case where \( \alpha \geq \alpha_n = \Delta \frac{\log n}{n} \) for sufficiently large \( \Delta \) (independently of $p$). However, this level of sparsity is not our main focus.
\end{remark}

\section{Verifying well-behavedness} \label{sec:verify_well_behaved}
Here we prove Proposition \ref{prop:well-behaved}. We first apply \cite[Lemma~6.23]{sawin2022moment} to reduce it to a well-behavedness statement on the category of finite abelian $p$-groups.

\begin{lemma}\label{lem:moment-passing}
    For an abelian $p$-group $G$, let $n_k(G)$ be the number of injective $k$-flags in $G$, c.f.~\cite[Def.~6]{nguyen2022universality}. Then the moment function $\boldsymbol{1}$ is well-behaved on $\CC$ if and only if $G\mapsto n_k(G)$ is well-behaved on $\FinMod_p$.
\end{lemma}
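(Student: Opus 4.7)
The plan is to apply \cite[Lemma 6.23]{sawin2022moment} directly. That lemma addresses precisely our setup: categories of the form $(\mathcal{D}, \mathcal{F})$ constructed as in \cite[Lemma 6.22]{sawin2022moment}, where $\mathcal{D}$ is a diamond category and $\mathcal{F}: \mathcal{D} \to \mathbf{Sets}$ is a functor. Here $\mathcal{D} = \FinMod_p$ (a diamond category by \cite[Lemma 6.1]{sawin2022moment} applied with $R = \Z_p$) and $\mathcal{F} = \calFl_k$, which is exactly how $\CC_p$ was built in \Cref{def:C}. The lemma of Sawin--Wood relates well-behavedness of a moment function on $(\mathcal{D}, \mathcal{F})$ to well-behavedness of the moment function on $\mathcal{D}$ obtained by summing fiberwise over $\mathcal{F}(G)$, in both directions.

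It remains only to compute the pushforward of $\boldsymbol{1}$ and match it with $n_k$. For each abelian $p$-group $G \in \FinMod_p$, the objects of $\CC_p$ lying over $G$ are pairs $(G, \mathcal{F})$ with $\mathcal{F} \in \calFl_k(G)$; summing the constant value $1$ over all such flags yields
\[
\sum_{\mathcal{F} \in \calFl_k(G)} 1 = \abs{\calFl_k(G)} = n_k(G),
\]
so the corresponding moment function on $\FinMod_p$ is precisely $G \mapsto n_k(G)$. Invoking \cite[Lemma 6.23]{sawin2022moment} then yields the claimed equivalence.

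The main (and really the only) point of care is simply to check that the correspondence in \cite[Lemma 6.23]{sawin2022moment} is indeed fiberwise summation on the nose---not, e.g., a sum weighted by automorphisms or an integral against an auxiliary measure---and that the hypotheses of that lemma apply to $(\FinMod_p, \calFl_k)$. Both are bookkeeping once the statement of \cite[Lemma 6.23]{sawin2022moment} is examined, so no genuine obstacle arises. Notice that the use of the subcategory $\CC_P$ (rather than $\Fl_{k,P}$ directly) is essential here: as the remark following \Cref{lem:replace} observes, \cite[Lemma 6.23]{sawin2022moment} applies to categories built via \cite[Lemma 6.22]{sawin2022moment} but not to those merely built via \cite[Proposition 6.18]{sawin2022moment}, which is why we formulated $\CC_P$ in this specific way.
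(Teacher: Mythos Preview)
Your proposal is correct and follows essentially the same approach as the paper: invoke \cite[Lemma~6.23]{sawin2022moment} to reduce well-behavedness on $\CC_p=(\FinMod_p,\calFl_k)$ to well-behavedness of the fiberwise sum $G\mapsto \sum_{\CF\in\calFl_k(G)}\boldsymbol{1}_{(G,\CF)}=\abs{\calFl_k(G)}=n_k(G)$ on $\FinMod_p$. The paper's proof is just the two-line version of what you wrote.
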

\begin{proof}
    Recall that $\CC=(\FinMod_p,\calFl_k)$. By \cite[Lemma~6.23]{sawin2022moment}, $\boldsymbol{1}$ is well-behaved on $\CC$ if and only if the moment function
    \begin{equation}
        G\mapsto \sum_{\CF\in \calFl_k(G)} \boldsymbol{1}_{(G,\CF)}
    \end{equation}
    is well-behaved on $\FinMod_p$. But the sum is precisely $\abs{\calFl_k(G)}=n_k(G)$.
\end{proof}

We will use an upper bound of $n_k(G)$ from \cite{nguyen2022universality}. Given an integer partition $\lambda=(\lambda_1\geq \dots \geq \lambda_l)$, define 
\begin{equation}
    G_\lambda = \frac{\Z}{p^{\lambda_1}\Z} \oplus \cdots \frac{\Z}{p^{\lambda_l}\Z}.
\end{equation}
By the classification of modules over PID, for each $G\in \FinMod_p$, there is a unique partition $\lambda$, called the \defn{type} of $G$, such that $G\simeq G_\lambda$. 

\begin{lemma}[{\cite[Lemma~8.5]{nguyen2022universality}}]
    Given a prime $p$ and an integer $k\geq 1$, there are constants $f_k\in \R$ and $0<c_k<1$ depending only on $k$, such that
    \begin{equation}\label{eq:nk-bound}
        n_k(G_\lambda)\leq p^{f_k\lambda_1+\frac{1-c_k}{2}\sum_{i\geq 1}\lambda_i'^2},
    \end{equation}
    for any partition $\lambda$, where $\lambda_i':=\abs{\{j: \la_j \geq i\}}$ is the $i$-part of the conjugate partition of $\lambda$. 
\end{lemma}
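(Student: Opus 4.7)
The plan is to prove \eqref{eq:nk-bound} by induction on $k$. The base case $k=1$ is immediate since an injective $1$-flag has no intermediate steps and hence $n_1(G) = 1$. For the inductive step, conditioning an injective $k$-flag on its top step $H_{k-1}$ gives the recursion
\begin{equation}
n_k(G) = \sum_{H \leq G} n_{k-1}(H).
\end{equation}
For $G = G_\lambda$, grouping subgroups by isomorphism type yields
\begin{equation}
n_k(G_\lambda) = \sum_{\mu \subseteq \lambda} \alpha_\mu(\lambda)\, n_{k-1}(G_\mu),
\end{equation}
where $\alpha_\mu(\lambda) := \#\{H \leq G_\lambda : H \simeq G_\mu\}$.

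The first ingredient is a classical formula (see \cite[Chapter II]{mac}) expressing $\alpha_\mu(\lambda)$ as a product of Gaussian binomial coefficients. Using $\binom{a}{b}_p \leq p^{b(a-b)}$ then yields the crude but sufficient bound $\alpha_\mu(\lambda) \leq p^{\sum_i \mu'_i(\lambda'_i - \mu'_i)}$. Substituting this together with the inductive hypothesis $n_{k-1}(G_\mu) \leq p^{f_{k-1}\mu_1 + \frac{1-c_{k-1}}{2}\sum_i \mu_i'^2}$, and writing $a_i := \mu'_i$, $b_i := \lambda'_i$ for brevity, each summand is bounded by $p^{E(\mu)}$ with
\begin{equation}
E(\mu) = f_{k-1}\mu_1 + \sum_i a_i b_i - \frac{1+c_{k-1}}{2}\sum_i a_i^2.
\end{equation}

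The core step is a quadratic maximization: over real $a_i$, the quantity $\sum_i a_i b_i - \tfrac{1+c_{k-1}}{2}\sum_i a_i^2$ attains its maximum at $a_i = b_i/(1+c_{k-1})$, with value $\tfrac{1}{2(1+c_{k-1})}\sum_i b_i^2$. Combined with $\mu_1 \leq \lambda_1$, this gives
\begin{equation}
E(\mu) \leq f_{k-1}\lambda_1 + \frac{1}{2(1+c_{k-1})}\sum_i \lambda_i'^2.
\end{equation}
Since $1/(1+c_{k-1}) < 1$, there is room to pick $c_k \in (0,1)$ strictly smaller than $c_{k-1}/(1+c_{k-1})$, producing a slack $\eta \sum_i \lambda_i'^2$ with $\eta > 0$ to be exploited below.

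Finally, we absorb the number of summands: $\#\{\mu \subseteq \lambda\} \leq \prod_i(\lambda_i+1) \leq 2^{|\lambda|}$. Since at most $\lambda_1$ of the $\lambda_i'$ are nonzero, Cauchy--Schwarz gives $|\lambda| = \sum_i \lambda_i' \leq \sqrt{\lambda_1\sum_i \lambda_i'^2}$, and AM--GM then splits this as $\epsilon \sum_i \lambda_i'^2 + \lambda_1/(4\epsilon)$ for any $\epsilon > 0$. Choosing $\epsilon$ small enough that $\epsilon\log_p 2 \leq \eta$, and setting $f_k := f_{k-1} + (\log_p 2)/(4\epsilon)$, the multiplicative factor $2^{|\lambda|}$ is completely absorbed and \eqref{eq:nk-bound} follows. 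The main obstacle is the bookkeeping of constants $(f_k, c_k)$ through the induction, but this is manageable because the recursion $c_k \mapsto c_{k-1}/(1+c_{k-1})$ keeps $c_k$ strictly in $(0,1)$ at every step, leaving enough slack for the polynomial factor.
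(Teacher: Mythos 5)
The paper cites this lemma from \cite{nguyen2022universality} without reproving it, so there is no in-text proof to compare against; I assess your argument on its own merits. Your inductive strategy --- the recursion $n_k(G)=\sum_{H\leq G}n_{k-1}(H)$, the product formula for $\alpha_\mu(\lambda)$ in terms of Gaussian binomials, completion of the square in the variables $\mu_i'$, and absorbing the number of summands via Cauchy--Schwarz and AM--GM into the slack $\eta>0$ left by taking $c_k < c_{k-1}/(1+c_{k-1})$ --- is correct, and the recursion for $c_k$ keeps it strictly inside $(0,1)$ as you note. This is the natural way to prove the bound.

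Two small bookkeeping points should be fixed. First, the bound $\binom{a}{b}_p \leq p^{b(a-b)}$ is false as stated (already $\binom{2}{1}_p = p+1 > p$); the correct estimate is $\binom{a}{b}_p \leq C_0\, p^{b(a-b)}$ where $C_0 = \prod_{j\geq 1}(1-2^{-j})^{-1}$ is an absolute constant (bounding $\prod_{j\geq 1}(1-p^{-j})^{-1}$ uniformly in $p$). Since the product formula for $\alpha_\mu(\lambda)$ has at most $\lambda_1$ nontrivial factors (for $i>\lambda_1$ one has $\lambda_i'=\mu_i'=0$), this contributes an extra factor $C_0^{\lambda_1}$, absorbed into $f_k\lambda_1$ just as you absorb $2^{|\lambda|}$. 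Second, as written your choices of $\epsilon$ and of $f_k := f_{k-1}+(\log_p 2)/(4\epsilon)$ depend on $p$, whereas the lemma requires $f_k$ and $c_k$ to depend only on $k$. The fix is cosmetic: take $\epsilon = \eta$, so that $\epsilon\log_p 2 \leq \eta$ for every prime $p$ since $\log_p 2\leq 1$, and take $f_k := f_{k-1} + 1/(4\eta) + \log_2 C_0$, which dominates the $p$-dependent exponent uniformly. With these adjustments the argument is complete.
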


We will use this upper bound to show that $n_k(\cdot)$ is well-behaved on $\FinMod_p$. Unfortunately, neither of the two general-purpose results \cite[Corollary~6.5]{sawin2022moment} nor \cite[Lemma~6.9]{sawin2022moment} applies directly: \cite[Corollary~6.5]{sawin2022moment} does not allow the ``quadratic growth'' $\sum \lambda_i'^2$ in the exponent, while \cite[Lemma~6.9]{sawin2022moment} does not allow the extra ``linear growth'' $f_k\lambda_1$ in the exponent. To this end, we will prove a slightly stronger version of \cite[Lemma~6.9]{sawin2022moment} for our current setting as Lemma \ref{lem:quadratic-saving} below. For a quick glance, it essentially says that the constraint on the coefficient of the linear growth is only relevant when the coefficient of the quadratic growth is the maximal allowed (namely, the quadratic part grows like $\sum \lambda_i'^2/2$). We will state Lemma \ref{lem:quadratic-saving} over $\Zp$ for simplicity, but the same holds if $\Zp$ is replaced by any discrete valuation ring with finite residue field.

For $G\in \FinMod_p$, let $\wedge^2 G$ denote the quotient of the $\Zp$-module $G\otimes G$ by the submodule generated by elements of the form $g\otimes g$ for $g\in G$. Recall for any partition $\lambda$ that $\abs{G_\lambda}=p^{\sum_{i\geq 1}\lambda_i'}$ and (see e.g. \cite[p.~925]{W0})
\begin{equation}\label{eq:alt2-size}
    \abs{\wedge^2 G_\lambda}=p^{\sum_{i\geq 1} \frac{\lambda_i'(\lambda_i'-1)}{2}}.
\end{equation}
For $s\in \Z_{\geq 1}$, let $\FinMod_{\Z/p^s\Z}$ be the full subcategory of $\FinMod_p$ consisting of all $\Z/p^s\Z$-modules, or equivalently, all $G_\lambda$ with $\lambda_1\leq s$. 

\begin{lemma}\label{lem:levelwise}
    Let $N=(N_G)_G\in \R^{\FinMod_p/\simeq}$ be a moment function on $\FinMod_p$. Suppose for each $s\in \Z_{\geq 1}$, there exist $\eps,c>0$ (possibly depending on $s$) such that $\abs{N_G}\leq c\abs{G}^{1-\eps}\abs{\wedge^2G}$ for all $G\in \FinMod_{\Z/p^s \Z}$. Then $N$ is well-behaved on $\FinMod_p$.
\end{lemma}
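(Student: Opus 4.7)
The plan is to verify well-behavedness of $N$ by working level by level along the filtration $\FinMod_p = \bigcup_{s \geq 1} \FinMod_{\Z/p^s\Z}$. This mirrors the strategy of \cite[Lemma~6.9]{sawin2022moment}, which handles growth $|N_G| \leq c|G|^{1-\eps}$; the novelty here is to absorb the extra multiplicative factor $|\wedge^2 G|$ using a precise cancellation with $|\Aut(G)|$ that is available once we restrict to groups of bounded exponent.

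First, I would use the formalism of \cite{sawin2022moment} to reduce well-behavedness of $N$ on $\FinMod_p$ to the absolute convergence, at each fixed level $s$, of weighted sums of $|N_G|/|\Aut(G)|$ over isomorphism classes in $\FinMod_{\Z/p^s\Z}$, with constants permitted to depend on $s$. This is a by-product of the same machinery that underlies \cite[Lemma~6.9]{sawin2022moment}.

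Second, for a fixed $s$, I would combine the hypothesis $|N_{G_\lambda}| \leq c |G_\lambda|^{1-\eps} |\wedge^2 G_\lambda|$ with the formula $|\wedge^2 G_\lambda| = p^{\sum_i \lambda_i'(\lambda_i'-1)/2}$ from \eqref{eq:alt2-size} and the classical automorphism count
\begin{equation*}
|\Aut(G_\lambda)| = p^{\sum_i (\lambda_i')^2} \prod_i (p^{-1};p^{-1})_{\lambda_i' - \lambda_{i+1}'}
\end{equation*}
(see \cite[Ch.~II]{mac}). For $\lambda_1 \leq s$, the trailing $q$-Pochhammer product is bounded below by $\bigl(\prod_{i\geq 1}(1-p^{-i})\bigr)^s > 0$, so direct arithmetic gives
\begin{equation*}
\frac{|N_{G_\lambda}|}{|\Aut(G_\lambda)|} \;\leq\; c_s' \cdot p^{(\tfrac{1}{2}-\eps)\sum_i \lambda_i' \;-\; \tfrac{1}{2}\sum_i (\lambda_i')^2}.
\end{equation*}
The quadratic term $-\tfrac{1}{2}\sum_i (\lambda_i')^2$ in the exponent produces super-polynomial decay, ensuring absolute convergence of the sum over partitions $\lambda$ with $\lambda_1 \leq s$, and one finishes the level-$s$ check as in \cite[Lemma~6.9]{sawin2022moment}.

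The main obstacle is the passage from these level-wise estimates to global well-behavedness on $\FinMod_p$. Two issues arise: (a) the bound $|N_G| \leq c |G|^{1-\eps}|\wedge^2 G|$ sits at the boundary of what the general Sawin--Wood criteria tolerate, so the arithmetic cancellation between $|\wedge^2 G|$ and $|\Aut(G)|$ is essential and cannot be replaced by a softer polynomial-growth estimate; and (b) the constants $c$ and $\eps$ depend on $s$. Both are resolved by the fact that the well-behavedness definition in \cite{sawin2022moment}, once unwound, tests only one level at a time, so no uniformity in $s$ is actually needed and the level-wise bounds suffice. The remaining assembly is routine bookkeeping, patterned directly on \cite[Lemma~6.9]{sawin2022moment}.
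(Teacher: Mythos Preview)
Your approach is essentially the paper's: both reduce to the criterion extracted from the proof of \cite[Lemma~6.9]{sawin2022moment} and exploit that, for each fixed test object, only groups of one bounded exponent enter. The paper is slightly more direct---it notes that for fixed $G$ of type $\lambda$, every extension $G_\alpha$ appearing in that criterion lies in $\FinMod_{\Z/p^{\lambda_1+1}\Z}$, applies the level-$(\lambda_1{+}1)$ hypothesis to bound $|N_{G_\alpha}|$, and cites the convergence already established in \cite[Lemma~6.9]{sawin2022moment}, bypassing your explicit $|\Aut|$ computation entirely.
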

\begin{proof}
    As is in the proof of \cite[Lemma~6.9]{sawin2022moment}, it suffices to prove for each fixed $G\in \FinMod_p$ that
    \begin{equation}\label{eq:moment-sum}
        \frac{1}{\abs{\Aut(G)}}\sum_{e=0}^\infty \parens*{\frac{1}{\abs{\Hom(G,\Fp)}^e\prod_{j=1}^e (p^j-1)}}\sum_{\alpha\in \mathrm{Ext}^1(N,(\Fp)^e)} \abs{N_{G_\alpha}} < \infty,
    \end{equation}
    where $0\to G\to G_\alpha \to (\Fp)^e$ is the extension determined by $\alpha$. Now let $s=\lambda_1+1$, where $\lambda$ is the type of $G$. Then the extension sequence implies that $G_\alpha\in \FinMod_{\Z/p^s \Z}$ for every $\alpha$. Choose $\eps,c>0$ based on this $s$, then the sum in \eqref{eq:moment-sum} is bounded above by
    \begin{equation}
        \frac{1}{\abs{\Aut(G)}}\sum_{e=0}^\infty \parens*{\frac{1}{\abs{\Hom(G,\Fp)}^e\prod_{j=1}^e (p^j-1)}}\sum_{\alpha\in \mathrm{Ext}^1(N,(\Fp)^e)} c\abs{G}^{1-\eps}\abs{\wedge^2G},
    \end{equation}
    which is shown to be finite in the proof of \cite[Lemma~6.9]{sawin2022moment}.
\end{proof}

\begin{lemma}\label{lem:quadratic-saving}
    Let $N=(N_G)_G\in \R^{\FinMod_p/\simeq}$ be a moment function on $\FinMod_p$. Suppose there exist $\eps,r,c>0$ such that $\abs{N_G}\leq c\abs{G}^r\abs{\wedge^2G}^{1-\eps}$ for all $G\in \FinMod_p$. Then $N$ is well-behaved on $\FinMod_p$. 
\end{lemma}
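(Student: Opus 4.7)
The plan is to reduce to Lemma \ref{lem:levelwise}. The idea is to fix $s \in \Z_{\geq 1}$ and produce $\eps', c' > 0$ such that $|N_G| \leq c' |G|^{1-\eps'}|\wedge^2 G|$ for every $G \in \FinMod_{\Z/p^s\Z}$; combined with the hypothesis $|N_G| \leq c|G|^r|\wedge^2 G|^{1-\eps}$ this amounts to establishing the pointwise estimate
\begin{equation*}
|G|^{r-1+\eps'} \leq (c'/c) \cdot |\wedge^2 G|^\eps \quad \text{for all } G \in \FinMod_{\Z/p^s\Z}.
\end{equation*}

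The key observation is that on $\FinMod_{\Z/p^s\Z}$, $\log_p|G|$ grows only linearly in the number of cyclic summands of $G$, while $\log_p|\wedge^2 G|$ grows at least quadratically. Explicitly, for $G = G_\lambda$ with $\lambda_1 \leq s$, let $\ell := \lambda_1'$ denote the number of parts of $\lambda$. Then $\log_p|G| = |\lambda| \leq s\ell$, while by \eqref{eq:alt2-size},
\begin{equation*}
\log_p|\wedge^2 G| = \sum_{i\geq 1} \binom{\lambda_i'}{2} \geq \binom{\ell}{2}.
\end{equation*}
Fixing any $\eps' \in (0,1)$ (for instance $\eps' = 1/2$), I would then bound
\begin{equation*}
\log_p\bigl(|G|^{r-1+\eps'}|\wedge^2 G|^{-\eps}\bigr) \leq \max(r-1+\eps', 0) \cdot s\ell - \eps \binom{\ell}{2},
\end{equation*}
where the $\max$ accounts for the case $r-1+\eps' < 0$, in which $|G|^{r-1+\eps'} \leq 1$ trivially. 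The right-hand side is a quadratic in $\ell$ with negative leading coefficient $-\eps/2$, hence bounded above uniformly for $\ell \geq 0$ by some finite $M = M(r, \eps, \eps', s)$. Taking $c' := c \cdot p^M$ then verifies the hypothesis of Lemma \ref{lem:levelwise}, whose conclusion is the desired statement.

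There is no significant obstacle beyond bookkeeping. Conceptually, restricting to bounded exponent $p^s$ — which is precisely what Lemma \ref{lem:levelwise} allows us to do one $s$ at a time — forces the quadratic growth of $|\wedge^2 G|^\eps$ to absorb any fixed power of $|G|$, reconciling the `linear in $|G|$' hypothesis of Lemma \ref{lem:levelwise} with the more permissive `polynomial in $|G|$' hypothesis of the present lemma.
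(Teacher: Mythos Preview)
Your proposal is correct and follows essentially the same approach as the paper: reduce to Lemma~\ref{lem:levelwise} by showing, for each fixed exponent bound $s$, that the quadratic growth of $\log_p|\wedge^2 G|$ dominates any fixed multiple of $\log_p|G|$. The only cosmetic difference is in how the key inequality is obtained: the paper uses Cauchy--Schwarz on the full vector $(\lambda_1',\dots,\lambda_s')$ to bound $\sum\lambda_i'^2 - f\sum\lambda_i'$ from below, whereas you use the simpler single-parameter bounds $|\lambda|\leq s\ell$ and $\sum_i\binom{\lambda_i'}{2}\geq \binom{\ell}{2}$ with $\ell=\lambda_1'$; also, the paper absorbs directly to $|N_G|\leq c'|\wedge^2 G|$ (your $\eps'=1$) rather than keeping a spare $|G|^{1-\eps'}$ factor.
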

\begin{proof}
    Fix an arbitrary $s\in \Z_{\geq 1}$ and consider all partitions $\lambda$ with $\lambda_1\leq s$, so that $G_\lambda\in \FinMod_{\Z/p^s \Z}$ and $\lambda_{i}'=0$ for $i>s$. Cauchy--Schwarz inequality implies that
    \begin{equation}
        \sum_{i=1}^s \lambda_i' \leq \sqrt{s}\parens*{\sum_{i=1}^s \lambda_i'^2}^{1/2}.
    \end{equation}
    Since there are only finitely many integer-valued vectors $(\lambda_i')_{i=1}^s$ with bounded $\sum_{i=1}^s \lambda_i'^2$, the above estimate implies that for every $f\in \R$, there exists $C$ depending on $s$ and $f$ such that
    \begin{equation}
        \sum_{i=1}^s \lambda_i'^2 - f \sum_{i=1}^s \lambda_i' \geq -C
    \end{equation}
    for all partitions $\lambda$ with $\lambda_1\leq s$. Combined with \eqref{eq:alt2-size}, we may absorb $\abs{G}^r$ into $\abs{\wedge^2 G}^{-\eps}$ to conclude that there exists $c'$ depending on $s$ such that
    \begin{equation}
        \abs{N_G}\leq c' \abs{\wedge^2 G}
    \end{equation}
    for all $G\in \FinMod_{\Z/p^s \Z}$. Since $s\in \Z_{\geq 1}$ is arbitrary, the conclusion follows from Lemma \ref{lem:levelwise}.
\end{proof}

\begin{remark}~
    \begin{enumerate}
        \item By \cite[Lemma~6.9]{sawin2022moment} and the remark thereafter, $N_G=\abs{G}^r\abs{\wedge^2 G}$ is not well-behaved on $\FinMod_p$ if and only if $r\geq 1$. Note that this does not contradict Lemma \ref{lem:quadratic-saving}.
     \vskip .05in
   \item While \cite[Corollary~6.5]{sawin2022moment} is not a special case of \cite[Lemma~6.9]{sawin2022moment}, it is now a special case of both Lemma \ref{lem:levelwise} and Lemma \ref{lem:quadratic-saving}.
    \end{enumerate}
\end{remark}

\begin{proof}[Proof of Proposition \ref{prop:well-behaved}]
    In \eqref{eq:nk-bound}, note that $\lambda_1\leq \sum_{i\geq 1} \lambda_i'$ and the coefficient $\frac{1-c_k}{2}$ in front ot $\sum_{i\geq 1}\lambda_i'^2$ is strictly less than $1/2$. Comparing this coefficient with \eqref{eq:alt2-size}, bounding the factors involving $p^{\sum \lambda_i'}$ from above by $\abs{G}^r$ for some large $r$, and applying Lemma \ref{lem:quadratic-saving}, it follows that $G\mapsto n_k(G)$ is well-behaved on $\FinMod_p$. By Lemma \ref{lem:moment-passing}, we conclude that $\boldsymbol{1}$ is well-behaved on $\CC$.
\end{proof}

\section{Proof of two-matrix corollaries} \label{sec:prove_conv_cor}



\begin{proof}[Proof of {\Cref{thm:conv_cor_intro}}]
        We will deduce from the $k=2$ case of \Cref{thm:flag} and some calculation. We relate the desired probability to probabilities with no conditioning via
     \begin{equation}\label{eq:cond_prob_mm}
         \text{LHS\eqref{eq:conv_cor_intro}} = \frac{\lim_{n \to \infty} \Pr\Big(\cok(M_1 M_2)_P \simeq G \text{ and } \cok(M_1)_P \simeq H \text{ and }\cok(M_2)_P \simeq K\Big)  }{\lim_{n \to \infty} \Pr(\cok(M_1)_P \simeq H \text{ and }\cok(M_2)_P \simeq K)}.
     \end{equation}
     The denominator is 
    \begin{equation}\label{eq:authk}
        \lim_{n \to \infty} \Pr(\cok(M_1)_P \simeq H \text{ and }\cok(M_2)_P \simeq K) = \frac{\left(\prod_{p \in P} \prod_{i \geq 1} (1-p^{-i})\right)^2}{\abs{\Aut(H)} \cdot \abs{\Aut(K)}}
    \end{equation}
    by applying the $k=1$ case of \Cref{thm:flag} (which, as mentioned, was proven earlier in \cite{W1}) to the independent matrices $M_1$ and $M_2$. 

    Now we must compute the numerator in \eqref{eq:cond_prob_mm}, which is the slightly harder part. Recall the notation $\Z_{(P)}$ from \eqref{def:ZP}, and note that matrices over $\Z$ trivially act on $\Z_{(P)}^n$ since $\Z$ includes into $\Z_{(P)}$. Let $M_1,M_2$ be any deterministic matrices for the moment. If $\phi:\cok(M_1M_2)_P \to \cok(M_1)_P$ is the natural map, then 
    \begin{equation}
         \ker(\phi) =  M_1(M_2 \Z_{(P)}^n)/M_1 \Z_{(P)}^n.
     \end{equation}
     If $M_1$ is full rank over $\Q$, then multiplication by $M_1^{-1}$ is an isomorphism of $\Z_{(P)}$-modules, and hence
     \begin{equation}
         \ker(\phi) \simeq M_2 \Z_{(P)}^n / \Z_{(P)}^n = \cok(M_2)_P.
     \end{equation}
     Since we are conditioning on $\cok(M_1)_P$ being a finite group in \eqref{eq:conv_cor_intro}, we are in particular conditioning on $M_1$ being full rank.
     Hence
     \begin{multline}\label{eq:cok_to_flags}
         \Prob\Big(\cok(M_1 M_2)_P \simeq G \text{ and } \cok(M_1)_P \simeq H \text{ and }\cok(M_2)_P \simeq K\Big) \\ 
         = \sum_{\substack{[G_2 \xrightarrow{\phi} G_1] \in \Fl_{2,P}/\simeq \text{ with} \\ \ker(\phi) \simeq K, G_2 \simeq G, G_1 \simeq H}} \Prob(\bfcok(M_1,M_2) \simeq (G_2 \xrightarrow{\phi} G_1)).
     \end{multline}
    Applying \Cref{thm:flag},
     \begin{equation}\label{eq:flags_and_auts}
         \text{RHS\eqref{eq:cok_to_flags}} = \left(\prod_{p \in P} \prod_{i \geq 1} (1-p^{-i})\right)^2 \sum_{\substack{[G_2 \xrightarrow{\phi} G_1] \in \Fl_{2,P}/\simeq \text{ with} \\ \ker(\phi) \simeq K, G_2 \simeq G, G_1 \simeq H}} \frac{1}{\abs{\Aut(G_2 \xrightarrow{\phi} G_1)}}.
     \end{equation}
     Now, $\Aut(G) \times \Aut(H)$ acts on $\Fl_2$ via 
     \begin{equation}
         (\gamma,\eta) \cdot (G_2 \xrightarrow{\phi} G_1) = (G_2 \xrightarrow{\eta \circ \phi \circ \gamma^{-1}} G_1),
     \end{equation}
     i.e. it sends the flag $G_2 \xrightarrow{\phi} G_1$ to the unique flag $G_2 \xrightarrow{\phi'} G_1$ such that the diagram

\[\begin{tikzcd}
    G & H \\
    G & H
    \arrow["\phi", from=1-1, to=1-2]
    \arrow["\gamma", from=1-1, to=2-1]
    \arrow["\eta", from=1-2, to=2-2]
    \arrow["{\phi'}", from=2-1, to=2-2]
\end{tikzcd}\]
    commutes. The stabilizer of a given element $(G_2 \xrightarrow{\phi} G_1) \in \Fl_2$ is exactly $\Aut(G_2 \xrightarrow{\phi} G_1)$. Hence by the orbit-stabilizer theorem,
    \begin{align}\label{eq:orb-stat-1}
        \begin{split}
            \text{RHS\eqref{eq:flags_and_auts}} &= \frac{\left(\prod_{p \in P} \prod_{i \geq 1} (1-p^{-i})\right)^2}{\abs{\Aut(G) \times \Aut(H)}}\sum_{\substack{[G_2 \xrightarrow{\phi} G_1] \in \Fl_{2,P}/\simeq \text{ with} \\ \ker(\phi) \simeq K, G_2 \simeq G, G_1 \simeq H}} \frac{\abs{\Aut(G) \times \Aut(H)}}{\abs{\Aut(G_2 \xrightarrow{\phi} G_1)}} \\ 
            &=   \frac{\left(\prod_{p \in P} \prod_{i \geq 1} (1-p^{-i})\right)^2}{\abs{\Aut(G) \times \Aut(H)}}\sum_{\substack{[G_2 \xrightarrow{\phi} G_1] \in \Fl_{2,P}/\simeq \text{ with} \\ \ker(\phi) \simeq K, G_2 \simeq G, G_1 \simeq H}} \abs{\{\psi: G \surj H: (G \xrightarrow{\psi} H) \simeq (G_2 \xrightarrow{\phi} G_1)\}} \\ 
            &= \frac{\left(\prod_{p \in P} \prod_{i \geq 1} (1-p^{-i})\right)^2}{\abs{\Aut(G) \times \Aut(H)}}\sum_{\substack{\psi: G \surj H \text{ with } \\ \ker(\psi) \simeq K}} 1.
        \end{split}
    \end{align}
    For each subgroup $N \leq G$ with $N \simeq K$ and $G/N \simeq H$, perhaps multiple $\psi$ will have $\ker(\psi) = N$. How many? Each $\psi$ is given by a composition of the quotient map $G \to G/N$ with an isomorphism $G/N \to H$, and there are $\abs{\Aut(H)}$ such isomorphisms, hence
    \begin{equation}
        \text{RHS\eqref{eq:orb-stat-1}} = \left(\prod_{p \in P} \prod_{i \geq 1} (1-p^{-i})\right)^2 \frac{\abs{\{N \leq G: N \simeq K, G/N \simeq H\}}}{\abs{\Aut(G)}}.
    \end{equation}
    Tracing back the string of equalities, 
    \begin{multline}
        \lim_{n \to \infty} \Pr(\cok(M_1 M_2)_P \simeq G \text{ and } \cok(M_1)_P \simeq H \text{ and }\cok(M_2)_P \simeq K) \\ 
        = \left(\prod_{p \in P} \prod_{i \geq 1} (1-p^{-i})\right)^2 \frac{\abs{\{N \leq G: N \simeq K, G/N \simeq H\}}}{\abs{\Aut(G)}}.
    \end{multline}
    By \eqref{eq:cond_prob_mm}, dividing the above by \eqref{eq:authk} yields the answer, and indeed doing so yields the right hand side of \eqref{eq:conv_cor_intro}.
\end{proof}

\begin{proof}[Proof of {\Cref{thm:rank_conv_intro}}]
    By reducing the matrices in the $P=\{p\}$ case of \Cref{thm:conv_cor_intro} modulo $p$, and summing over all abelian $p$ groups $G, H,K$ with ranks $c,a,b$, we obtain that 
    \begin{equation}
        \lim_{n \to \infty} \Prob\Big(\corank(M_1M_2) = c\mid \corank(M_1)=a \text{ and }\corank(M_2) = b\Big) 
    \end{equation}
    exists and is independent of the choice of matrix distribution. It is given by a sum over $G,H,K$, but rather than compute this sum we will simply notice that it suffices to compute the limit directly in the case where $M_1,M_2$ are uniformly distributed over $\Mat_n(\F_p)$. The uniform distribution is invariant under multiplication by $\GL_n(\F_p)$ on both the right and left, and there is a unique distribution supported on $\{M \in \Mat_n(\F_p) \mid \corank(M)=a\}$, given by $U\diag(I_{n-a},0_a)V$ where $U,V \in \GL_n(\F_p)$ are independent and uniform. Hence
    \begin{equation}
         M_1M_2 = U_1 \diag(I_{n-a},0_a)V_1 U_2 \diag(I_{n-b},0_b)V_2
     \end{equation} 
        in distribution where the $U_i$ and $V_i$ are independent in $\GL_n(\F_p)$. The corank is independent of $U_1$ and $V_2$, and $V_1U_2$ has uniform distribution, so 
        \begin{multline}
                    \Prob\Big(\corank(M_1M_2) = c\mid \corank(M_1)=a \text{ and }\corank(M_2) = b\Big) \\= \Prob\Big(\corank(\diag(I_{n-a},0_a) U \diag(I_{n-b},0_b)) = c\Big)
        \end{multline}
    where $U \in \GL_n(\F_p)$ is uniform. The matrix $\diag(I_{n-a},0_a) U \diag(I_{n-b},0_b)$ is just an $(n-a) \times (n-b)$ corner of a uniform matrix in $\GL_n(\F_p)$, padded with zeroes. Because the columns of a matrix in $\GL_n(\F_p)$ are uniform given linear independence, we have
    \begin{equation}
        \Prob\Big(\corank(M_1M_2) = c\mid \corank(M_1)=a \text{ and }\corank(M_2) = b\Big) = \Prob(\rank(B') = n-c)
    \end{equation}
    where $B'$ is the upper $(n-a) \times (n-b)$ submatrix of a uniformly full-rank matrix $B \in \Mat_{n \times (n-b)}$. Our reason for stating in this form is to use a known result, for which we also require the notation
    \begin{equation}
        \sqbinom{k}{\ell}_{p^{-1}} := \frac{(p^{-1};p^{-1})_k}{(p^{-1};p^{-1})_\ell (p^{-1};p^{-1})_{k-\ell}},
    \end{equation}
    where we recall $(p^{-1};p^{-1})_m$ was defined in the statement. Now, \cite[Lemma 6.7]{van2023reflecting}\footnote{One must take $n,d,k,r$ in the statement there to be $n-a,a,n-b,n-c$ in our notation. Also, one must assume that $a \geq b$ to satisfy the hypotheses there, but since the formula arrived at is symmetric in $a$ and $b$ (as it must be), the case $a < b$ follows by symmetry.} yields that 
    \begin{align}
        \begin{split}
            \Prob(\rank(B') = n-c) &= p^{-(c-a)(c-b)} \frac{\sqbinom{a}{c-b}_{p^{-1}} \sqbinom{n-a}{n-c}_{p^{-1}}}{\sqbinom{n}{n-b}_{p^{-1}}} \\ 
            &=   \frac{p^{-(c-a)(c-b)}(p^{-1};p^{-1})_a(p^{-1};p^{-1})_b}{(p^{-1};p^{-1})_{c-b}(p^{-1};p^{-1})_{a+b-c}(p^{-1};p^{-1})_{c-a}} \cdot \frac{(p^{-1};p^{-1})_{n-a}(p^{-1};p^{-1})_{n-b}}{ (p^{-1};p^{-1})_{n-c}(p^{-1};p^{-1})_n}.
        \end{split}
    \end{align}
    In the $n \to \infty$ limit, the $n$-dependent fraction becomes $1$, and we obtain the result.
\end{proof}

We note that the idea of the above argument is the same as the proof of \cite[Theorem 1.4]{nguyen2022universality}.

\appendix

\section{Symmetric polynomial formulas and parallelism}\label{section:formulas}

This Appendix elaborates on the discussion in the Introduction regarding structural connections with eigenvalues/singular values of complex matrix products. It is still relatively terse and impressionistic, but includes references to longer treatments.

\subsection{Additive and multiplicative free convolution} We first give a very brief overview of free convolutions, we refer the reader to \cite{AGZ,NS} for more details. Let $\mu$ and $\nu$ be probability measures on $\R$ with compact support. If $a,b$ are two freely independent random variables in a noncommutative probability space $(\CA,\phi)$ with law $\mu,\nu$ respectively, then the laws of $a+b$ and $ab$ are defined by $\mu\boxplus \nu$ and $\mu \boxtimes \nu$ respectively. We next give an analytical description of the convolutions. Consider the Cauchy transform of $\mu$,
$$G_{\mu}(z) = \int_{\R} \frac{d \mu(t)}{z-t}.$$
As $\mu$ has compact support, we can express it as formal series about $z=\infty$ 
 $$G_{\mu}(z) = \sum_{n\ge 0}m_{n} z^{-n-1} = \frac{1}{z}M_{\mu}(\frac{1}{z}),$$
 where $m_{n}$ are the moments of $\mu$ and $M_{\mu}(z)$ is the moment generating function.
 
Let $K_{\mu}(z)$ be the formal inverse of $G_{\mu}$, that is $G_{\mu}(K_{\mu}(z))=z$. Then $K_{\mu}(z)$ can be written as a formal series
$$K_{\mu}(z) = \frac{1}{z}+ \sum_{n=1}^{\infty} C_{n} z^{n}.$$
Define the $R$-transform as
$$R_{\mu}(z):= K_{\mu}(z) - 1/z.$$
Alternatively, we can define $R_{\mu}(z)$ as $\sum_{n=0}^{\infty} \kappa_{n+1}z^{n}$, where $\kappa_{n}$ are the free cumulants of $\mu$. Then $\mu\boxplus \nu$ is the measure whose $\R$-transform satisfies
$$R_{\mu \boxplus \nu}(z) = R_{\mu}(z) + R_{\nu}(z).$$
For the multiplicative free convolution, define the $S$-transform of $\mu$ by
$$S_{\mu}(z) := \frac{1+z}{z} M_{\mu}^{-1}(z),$$
where $M_{\mu}^{-1}(.)$ is the compositional inverse. Then $\mu\boxtimes \nu$ is the measure whose $S$-transform satisfies
$$S_{\mu \boxtimes \nu}(z) = S_{\mu}(z)  S_{\nu}(z).$$

\subsection{Symmetric polynomials and functions}

The treatment here follows \cite{GM} and \cite{van2020limits}. The Macdonald polynomials $P_\la (x_1,\dots, x_n;q,t)$ are symmetric polynomials associated to integer partitions $\la$ of length at most $n$, which is an $n$-tuple of non-negative integers $\la=(\la_{1},\dots, \la_{n}), \la_{1}\ge \dots \ge \la_{n}$. These are usually defined as eigenfunctions of some differential operator on the algebra $\Q(q,t)[x_1,\dots,x_n]^{S_n}$, and span the $\Q(q,t)$-vector space of such polynomials. More specifically, let $\Lambda_{n}$ be the algebra of symmetric polynomials in $n$ variables $x_{1},\dots, x_{n}$. Define a difference operator $D_{q,t}$ acting in $\Lambda_{n}$ as follows
$$D_{q,t} f = \sum_{i=1}^{n} \prod \frac{tx_{i} -x_{j}}{x_{i}-x_{j}} T_{q,x_{i}} f,$$
where $(T_{q,x_{i}}f)(x_{1},\dots, x_{n}) = f(x_{1},\dots, x_{i-1},qx_{i},x_{i+1},\dots, x_{n})$.

The operator $D_{q,t}$ has a complete set of eigenfunctions in $\Lambda_{n}$, which are the Macdonald polynomials $P_{\la}(x_{1},\dots, x_{n}; q,t)$, where $D_{q,t} P_{\la}(x_1,\ldots,x_n; q,t) = (\sum_{i=1}^{n} q^{\la_{i}} t^{n-i})P_{\la}(x_1,\ldots,x_n; q,t)$.
We also define the normalized polynomial
\begin{equation}
    \label{eq:mac_norm}
    \hat{P}_\la(x_1,\ldots,x_n;q,t) := \frac{P_\la(x_1,\ldots,x_n;q,t)}{P_\la(1,t,\ldots,t^{n-1};q,t)}.
\end{equation}

We list below a few special cases.

\subsubsection{Schur polynomials.} This is the Macdonald polynomial $P_\la (x_1,\dots, x_n;q,q)$ (that is $t=q$), which can be shown to have the following determinantal form independent of $q$:
$$P_{\la}(x_1,\ldots,x_n; q,q) = s_{\la}(x_1,\ldots,x_n) = \frac{\det [x_{i}^{\la_{j}+n-j}]_{i,j=1}^{n}}{\prod_{1\le i<j\le n}(x_{i}-x_{j})}.$$

\subsubsection{Hall-Littlewood polynomials} 
It can be shown that when setting $q=0$, one has explicit formulas:
$$P_{\la} (x_{1},\dots, x_{n};0, t) = \frac{1}{v_{\la}(t)} \sum_{\sigma \in S_{n}} \sigma\left(x_{1}^{\la_{1}}\dots x_{n}^{\la_{n}} \prod_{1\le i<j\le n} \frac{x_{i} -tx_{j}}{x_{i}-x_{j}}\right), \mbox{ where } v_{\la}(t) = \prod_{i\ge 0} \prod_{j=1}^{m_{i}(\la)} \frac{1-t^{j}}{1-t}.$$
This case is known as a \emph{Hall-Littlewood polynomial}, and relevant in combinatorics of abelian $p$-groups, $p$-adic groups, and $p$-adic random matrices (all of which are related).


\subsubsection{The Heckman–Opdam hypergeometric functions and multivariate Bessel functions} Let $\al_1> \dots >\al_n$ and $\theta>0$. Define the (type A) Heckman-Opdam hypergeometric function by
$$\CF_\al(z_1,\dots, z_n;\theta) := \lim_{\eps \to 0} \eps^{\theta n(n-1)/2}  P_{\lfloor \al_1/\eps\rfloor, \dots, \lfloor \al_n/\eps\rfloor} (e^{\eps z_1},\dots, e^{\eps z_n}; e^{-\eps}, e^{-\theta \eps}).$$
Let
$$\hat{\CF}_\al (z_1,\ldots,z_n;\theta):= \frac{\CF_\al(z_1,\ldots,z_n;\theta)}{\CF_\al(0, -\theta, \dots, (1-n)\theta;\theta)}.$$
A further limit
$$B_\al(z_1,\ldots,z_n;\theta) := \lim_{\eps \to 0} \CF_{\eps \alpha}(\eps^{-1}z_1,\ldots,\eps^{-1}z_n;\theta)$$
is referred to as the multivariate Bessel function.

\subsubsection{The coefficients} 
We can express $P_\la P_\mu$ as a linear combination of the $P_\nu$, 
$$P_\la P_\mu = \sum_\nu c_{\la,\mu}^\nu(q,t) P_\nu$$
for some coefficients $c_{\la,\mu}^\nu(q,t)$ (strictly speaking, in our definition we have fixed $n$ and these coefficients $c_{\la,\mu}^\nu(q,t) $ depend on $n$, but they stabilize and take the same value for all $n$ large enough). The coefficients $\hat{c}_{\la,\mu}^\nu(q,t)$ can be defined by $\hat{P}_\la \hat{P}_\mu = \sum_\nu \hat{c}_{\la,\mu}^\nu(q,t) \hat{P}_\nu$, or written explicitly as
\begin{equation}\label{eq:c_hat}
    \hat{c}_{\la,\mu}^\nu(q,t) = \frac{P_\nu(1,t,\ldots,t^{n-1};q,t)}{P_\la(1,t,\ldots,t^{n-1};q,t)P_\mu(1,t,\ldots,t^{n-1};q,t)}c_{\la,\mu}^\nu(q,t).
\end{equation}
These depend implicitly on $n$, and converge as $n \to \infty$ to finite limits, though they do not stabilize as do their un-normalized counterparts.

In parallel, for multivariate Bessel functions and Heckman–Opdam hypergeometric functions,
$$\hat{B}_\al(z_1,\ldots,z_n;\theta) \hat{B}_\beta(z_1,\ldots,z_n;\theta) = \int_\gamma b_{\al,\beta}^\gamma (\theta)\hat{B}_\gamma(z_1,\ldots,z_n;\theta) d \gamma_{Leb}$$
and
$$\hat{\CF}_\al(z_1,\ldots,z_n;\theta) \hat{\CF}_\beta(z_1,\ldots,z_n;\theta) = \int_\gamma c_{\al,\beta}^\gamma(\theta) \hat{\CF}_\gamma(z_1,\ldots,z_n;\theta) d \gamma_{Leb}.$$
These $b_{\al,\beta}^\gamma$ and $c_{\al,\beta}^\gamma$ are also limits of the Macdonald structure constants $c_{\la,\mu}^\nu(q,t)$.

\subsection{The convolutions, again}
The following was shown in \cite[Section 2]{GM}, which are the measures mentioned in Subsection \ref{subsection:consequences} for the eigenvalue and singular value profile.
\begin{theorem}\label{thm:ev_sv_convolution} Let $\al = (\alpha_1,\ldots,\alpha_n)$ and $\beta = (\beta_1,\ldots,\beta_n)$ be two decreasing sequences of real numbers, and $\delta$ denote the Dirac delta measure. Then
 $$d(\delta_\al \boxplus_2 \delta_\beta) =   b_{\al,\beta}^\gamma (1)d \gamma_{Leb},$$
and
 $$d(\delta_\al \boxtimes_2 \delta_\beta) =   c_{\al,\beta}^\gamma (1)d \gamma_{Leb}.$$
 \end{theorem}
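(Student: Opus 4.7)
The plan is to identify the spherical Fourier transform of each convolution measure using independence of Haar factors, then invert via the defining expansions of $b_{\al,\beta}^\gamma$ and $c_{\al,\beta}^\gamma$ into $\hat{B}_\gamma$ and $\hat{\CF}_\gamma$ respectively.

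For the additive case, $\delta_\al\boxplus_2\delta_\beta$ is the distribution of the eigenvalues of $A=A_1+A_2$ with $A_i=U_iD_iU_i^*$, where $U_1,U_2$ are independent Haar on $U(n)$, $D_1=\diag(\al_1,\ldots,\al_n)$ and $D_2=\diag(\beta_1,\ldots,\beta_n)$. Then the law of $A$ is conjugation-invariant, so $A\stackrel{d}{=}WD_\gamma W^*$ with $W$ Haar independent of the random eigenvalue vector $\gamma$. The Harish-Chandra--Itzykson--Zuber integral yields $\int_{U(n)}e^{\tr(ZUD_\gamma U^*)}\,dU=\hat{B}_\gamma(z_1,\ldots,z_n;1)$ for $Z=\diag(z_1,\ldots,z_n)$. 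Conditioning on $\gamma$ and then using independence of $U_1,U_2$,
\[
\E\bigl[\hat{B}_\gamma(z_1,\ldots,z_n;1)\bigr]
=\E\bigl[e^{\tr(ZA)}\bigr]
=\E\bigl[e^{\tr(ZA_1)}\bigr]\,\E\bigl[e^{\tr(ZA_2)}\bigr]
=\hat{B}_\al(z_1,\ldots,z_n;1)\,\hat{B}_\beta(z_1,\ldots,z_n;1).
\]
Comparing with the defining expansion $\hat{B}_\al\hat{B}_\beta=\int b_{\al,\beta}^\gamma(1)\hat{B}_\gamma\,d\gamma_{Leb}$ and invoking injectivity of the spherical transform $\mu\mapsto\int\hat{B}_\gamma(\cdot;1)\,d\mu(\gamma)$, we conclude that $\gamma$ has density $b_{\al,\beta}^\gamma(1)$ against Lebesgue measure, which is the desired identity.

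The multiplicative case runs in parallel, with singular values of $A_1A_2$ replacing eigenvalues of $A_1+A_2$ and Heckman--Opdam hypergeometric functions on $GL_n(\C)/U(n)$ replacing multivariate Bessel functions. Take $A_1=U_1\diag(e^{\al_1},\ldots,e^{\al_n})V_1^*$ and $A_2=U_2\diag(e^{\beta_1},\ldots,e^{\beta_n})V_2^*$ with $U_i,V_i$ independent Haar, so that $\al,\beta$ are log-singular values; then $A_1A_2$ is bi-$U(n)$-invariant and hence its distribution is determined by $\gamma$, the (random) log-singular-value vector of $A_1A_2$. The Harish-Chandra spherical transform on $GL_n(\C)/U(n)$ plays the role of HCIZ and identifies the bi-$U(n)$-averaged analogue against $\diag(e^{\gamma_1},\ldots,e^{\gamma_n})$ with $\hat{\CF}_\gamma(\cdot;1)$; independence then gives $\E[\hat{\CF}_\gamma(\cdot;1)]=\hat{\CF}_\al(\cdot;1)\hat{\CF}_\beta(\cdot;1)$, and comparison with the defining expansion produces density $c_{\al,\beta}^\gamma(1)$.

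The principal analytic obstacle is the injectivity/inversion step for the spherical transforms: one needs that if $\int\hat{B}_\gamma(z;1)\,d\mu(\gamma)=\int\hat{B}_\gamma(z;1)\,d\nu(\gamma)$ (resp.~for $\hat{\CF}$) for all $z$, then $\mu=\nu$. For $\theta=1$ this amounts to the classical Plancherel/inversion theorem for spherical functions on the compact symmetric space associated to $U(n)$ and on the noncompact symmetric space $GL_n(\C)/U(n)$; the former is elementary, the latter is the Harish-Chandra Plancherel theorem in its simplest (complex group) case. Secondary care is needed when entries of $\al,\beta$, or $\gamma$ coincide, but these are removable singularities of Weyl-denominator type handled by continuity in the parameters.
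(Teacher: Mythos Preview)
The paper does not give its own proof of this theorem; it is stated in the appendix with the attribution ``The following was shown in \cite[Section 2]{GM}'' and serves only as background for the analogy with the Hall--Littlewood case. So there is no paper-proof to compare against.

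Your proposal is essentially the argument of Gorin--Marcus and is correct in outline. In the additive case the factorization $\E[e^{\tr(ZA)}]=\E[e^{\tr(ZA_1)}]\,\E[e^{\tr(ZA_2)}]$ together with the HCIZ identity $\int_{U(n)}e^{\tr(ZUD_\gamma U^*)}\,dU=\hat{B}_\gamma(z;1)$ does give the claim once one invokes injectivity of the spherical (Bessel) transform on compactly supported measures. One small correction on the multiplicative case: the step you label ``independence'' is really the spherical-function functional equation $\int_{U(n)}\phi_z(g_1 k g_2)\,dk=\phi_z(g_1)\phi_z(g_2)$ for the Gelfand pair $(GL_n(\C),U(n))$; independence of $A_1,A_2$ alone does not give multiplicativity, since $\tr(Z\,\cdot\,)$ is not additive under matrix products the way it is under sums. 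With that substitution your sketch is sound.
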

The following was shown in \cite[Theorem 1.3 (3)]{van2020limits}, which is the universal measure $\mu\boxtimes_{HL} \nu$ mentioned in Section \ref{subsection:consequences}. Recall the notation $G_\la$ from \eqref{eq:G_la}.
\begin{theorem}\label{thm:VP} Let $M_1,M_2 \in \Mat_n(\Z_p)$ be independent matrices with independent entries distributed by the additive Haar measure on $\Z_p$. Then
$$\P\left(\cok(M_{1} M_{2}) \simeq G_\nu \mid \cok(M_{1})\simeq G_\la, \cok(M_{2}) \simeq G_\mu\right) =\hat{c}_{\la,\mu}^\nu(0,1/p) =: (\delta_{\la} \boxtimes_{HL,n} \delta_{\mu})(G_\nu).$$
\end{theorem}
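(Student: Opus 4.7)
The plan is to reduce the identity to a classical computation in the spherical Hecke algebra of $\GL_n(\Qp)$, using Smith normal form and bi-invariance of Haar measure to translate the matrix-product problem into a convolution of $K$-double cosets, where $K := \GL_n(\Zp)$.

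First, I would exploit the bi-invariance of the Haar measure on $\Mat_n(\Zp)$ under left- and right-multiplication by $K$. Conditioning on $\cok(M_i) \simeq G_{\lambda^{(i)}}$ is a positive-measure event that forces $M_i$ to be nonsingular and confines it to a single $K$-double coset on which the conditional law is bi-$K$-invariant, so we can write $M_i = U_i D_{\lambda^{(i)}} V_i$ with $U_i, V_i$ independent Haar on $K$ and $D_\lambda := \diag(p^{\lambda_1}, \ldots, p^{\lambda_n})$. Then $M_1 M_2 = U_1 D_\lambda W D_\mu V_2$ with $W := V_1 U_2$ also Haar on $K$, and because $\cok$ is invariant under $K$-multiplication on either side, the problem reduces to computing
\begin{equation*}
\Pr\bigl(\cok(D_\lambda W D_\mu) \simeq G_\nu\bigr) = \mu_K\bigl(\{w \in K : D_\lambda w D_\mu \in K D_\nu K\}\bigr),
\end{equation*}
where $\mu_K$ is the normalized Haar measure on $K$.

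Next, I would identify the right-hand side with a structure constant of the spherical Hecke algebra $\CH(\GL_n(\Qp), K)$. Writing $\mathbf{1}_{K D_\lambda K} \ast \mathbf{1}_{K D_\mu K} = \sum_\nu g^\nu_{\lambda \mu}(p)\,\mathbf{1}_{K D_\nu K}$, the desired probability equals $g^\nu_{\lambda \mu}(p) / [K D_\lambda K : K]$, where the numerator is a classical \emph{Hall polynomial} counting subgroups $N \leq G_\nu$ with $N \simeq G_\mu$ and $G_\nu / N \simeq G_\lambda$ (see \cite[Ch.~II]{mac}). I would then invoke the Satake--Macdonald identification of $\CH(\GL_n(\Qp), K)$ with the ring of $S_n$-symmetric Laurent polynomials, under which $\mathbf{1}_{K D_\lambda K}$ maps, up to an explicit scalar, to the Hall--Littlewood polynomial $P_\lambda(x; 0, 1/p)$. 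The convolution identity then becomes the Hall--Littlewood product $P_\lambda P_\mu = \sum_\nu c^\nu_{\lambda \mu}(0, 1/p) P_\nu$, and passing to the normalized basis $\hat{P}_\lambda$ absorbs the $[K D_\lambda K : K]$-factor into the hatted structure constant $\hat c^\nu_{\lambda \mu}(0, 1/p)$.

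The main obstacle is bookkeeping the three distinct normalizations involved: the double-coset index $[K D_\lambda K : K]$, the principal specialization $P_\lambda(1, 1/p, \ldots, p^{-(n-1)}; 0, 1/p)$ that defines $\hat P_\lambda$, and the Satake scaling factor relating $\mathbf{1}_{K D_\lambda K}$ to $P_\lambda$. Each has an explicit product formula in terms of arm/leg lengths and $q$-Pochhammer symbols at $q = p^{-1}$, and the content of the theorem is essentially that these three factors combine so that all prefactors cancel and the bare probability equals $\hat c^\nu_{\lambda \mu}(0, 1/p)$. As a shortcut that avoids some of this bookkeeping, one can instead condition only on $M_1$ and use Haar invariance once more to show that $M_1 M_2 \Zp^n$ is uniformly distributed among sublattices of $M_1 \Zp^n$ of cotype $\mu$, from which the conditional distribution of $\cok(M_1 M_2)$ becomes a ratio of Hall polynomial values that can be rearranged into the $\hat c$ form via the same Macdonald identities.
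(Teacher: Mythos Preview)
The paper does not give a proof of this theorem: it is stated in the appendix as background and attributed directly to \cite[Theorem 1.3 (3)]{van2020limits}. So there is no ``paper's own proof'' to compare against here.

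Your proposal is correct and is essentially the argument carried out in \cite{van2020limits}. The reduction via bi-$K$-invariance to $\Pr(D_\lambda W D_\mu \in K D_\nu K)$ with $W$ Haar on $K$ is exactly right, and identifying this probability with a (normalized) structure constant of the spherical Hecke algebra, then invoking the Macdonald formula for the Satake transform to pass to Hall--Littlewood polynomials, is the standard route. Your remark that the substance of the statement is the cancellation of three normalizing factors---the double-coset volume, the principal specialization defining $\hat P_\lambda$, and the Satake scalar---is accurate; these identities are all in \cite[Ch.~II--III, V]{mac}. The alternative ``shortcut'' you sketch, realizing $M_1 M_2 \Zp^n$ as a uniformly random sublattice of $M_1 \Zp^n$ of cotype $\mu$ and expressing the conditional probability as a ratio of Hall numbers, is also valid and perhaps slightly more transparent, since it bypasses the Hecke-algebra language entirely and lands directly on the combinatorial description of $c^\nu_{\lambda\mu}(0,t)$ in \cite[Ch.~III]{mac}.
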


In other words, both singular values/eigenvalues of complex matrices and cokernels of $p$-adic matrices are governed by convolution operations coming from two parallel degenerations of Macdonald polynomials.

\subsection{Asymptotic description of Hall-Littlewood convolution}

Recall that the normalized structure constants $\hat{c}_{\la,\mu}^\nu(q,t)$ implicitly depend on $n$ through \eqref{eq:c_hat}. By \Cref{thm:VP} one has
\begin{multline}
            \lim_{n \to \infty} \P\left(\cok(M_{1} M_{2}) \simeq G_\nu \mid \cok(M_{1})\simeq G_\la, \cok(M_{2}) \simeq G_\mu\right) \\ 
            = c_{\la,\mu}^\nu(0,1/p) \frac{P_\nu(1,p^{-1},\ldots,;0,p^{-1})}{P_\la(1,p^{-1},\ldots;0,p^{-1})P_\mu(1,p^{-1},\ldots;0,p^{-1})} 
        =: (\delta_{\la} \boxtimes_{HL} \delta_{\mu})(G_\nu). 
\end{multline}
By combining formulas to be found in \cite[Chapter III]{mac} one has a group-theoretic description
\begin{multline}
    c_{\la,\mu}^\nu(0,1/p) \frac{P_\nu(1,p^{-1},\ldots,;0,p^{-1})}{P_\la(1,p^{-1},\ldots;0,p^{-1})P_\mu(1,p^{-1},\ldots;0,p^{-1})} \\ = \frac{\abs{\Aut(G_\la)} \cdot \abs{\Aut(G_\mu)}}{\abs{\Aut(G_\nu)}} \abs{\{N \leq G_\nu: N \simeq G_\la, G/N \simeq G_\mu\}},
\end{multline}
where the polynomials in infinitely many variables $1,p^{-1},\ldots$ may be defined as limits of polynomials in finitely many variables but also have quite explicit formulas. Hence, the convolution operation we obtain in \Cref{thm:conv_cor_intro} is naturally defined in terms of Hall-Littlewood polynomials, and is (a limit of) an exact structural analogue of the complex eigenvalue/singular value version. The limit of the convolution operations of \Cref{thm:ev_sv_convolution} to free additive and multiplicative convolution is a bit more complicated, since it involves a scaling limit of $n$-atom measures to measures which have absolutely continuous parts.

\end{document}